\titleformat{\subsection}{\it}{\thesubsection.\enspace}{1pt}{}
\newtheorem{theo}{Theorem}[section]
\newtheorem{lemm}[theo]{Lemma}
\newtheorem{defi}[theo]{Definition}
\newtheorem{prop}[theo]{Proposition}
\newtheorem{rema}[theo]{Remark}
\numberwithin{equation}{section}
\begin{document}
\title{Global existence and optimal decay rate of weak solutions to the co-rotation Hooke dumbbell model
\hspace{-4mm}
}

\author{ Wenjie $\mbox{Deng}^1$ \footnote{E-mail: detective2028@qq.com},\quad
Zhaonan $\mbox{Luo}^1$\footnote{E-mail:   1411919168@qq.com} \quad and\quad
 Zhaoyang $\mbox{Yin}^{1,2}$\footnote{E-mail: mcsyzy@mail.sysu.edu.cn}\\
 $^1\mbox{Department}$ of Mathematics,
Sun Yat-sen University, Guangzhou 510275, China\\
$^2\mbox{Shenzhen}$ Campus of Sun Yat-sen University, Shenzhen 518107, China}

\date{}
\maketitle
\hrule

\begin{abstract}
In this paper, we mainly study global existence and optimal $L^2$ decay rate of weak solutions to the co-rotation Hooke dumbbell model. This micro-macro model is a coupling of the Navier-Stokes equation with a nonlinear Fokker-Planck equation. Based on the defect measure propagation method, we prove that the co-rotation Hooke dumbbell model admits a global weak solution provided the initial data under different integrable conditions. Moreover, we obtain optimal time decay rate in $L^2$ for the weak solutions obtained by the Fourier splitting method.  \\
\vspace*{5pt}
\noindent {\it 2020 Mathematics Subject Classification}: 35B40, 35Q30, 76B03, 76D05.

\vspace*{5pt}
\noindent{\it Keywords}: The co-rotation Hooke dumbbell model; global weak solutions; optimal $L^2$ decay rate.
\end{abstract}

\vspace*{10pt}

\tableofcontents

\section{Introduction}
In this paper, we consider the Hooke dumbbell model of polymeric fluids \cite{Bird1977,Doi1988,2017Global}:
\begin{align}\label{system}
\left\{
\begin{array}{ll}
\partial_t u + {\rm div}~(u\otimes u)-\nu\Delta u + \nabla P = {\rm div}~\tau(\psi),~~~{\rm div}~u=0, \\
\partial_t \psi + {\rm div}~(u\psi)-a \ {\rm div}_ q[\nabla_q (\frac{\psi}{\psi_{\infty}})\psi_{\infty}] =  {\rm div}_q[\sigma(u)\cdot q\psi] + \mu\Delta \psi,\\
\tau(\psi) = \int_{\mathbb{R}^d} q\otimes \nabla_q \mathcal{U} \psi dq - {\rm Id} ,~~~\mathcal{U} = \frac{1}{2}|q|^2,~~~\psi_{\infty} = e^{-\mathcal{U}},
\end{array}
\right.
\end{align}
where $u(t,x)$ stands for the velocity of the polymeric liquid and $\psi(t,x,q)$ denotes the distribution function for the internal configuration. Here $x\in \mathbb{R}^d$ or $\mathbb{T}^d$ and polymer elongation $q\in\mathbb{R}^d$, which means that the extensibility of the polymers is infinite. In addition, the parameters $a$, $\mu$ and $\nu$ are nonnegative constants. Denote rotation $\Omega(u)=\frac{\nabla u - (\nabla u)^{T}}{2}$ and deformation tensor $D(u)=\frac {\nabla u+(\nabla u)^T} {2}$. In general, taking $\sigma(u)=\nabla u$, while  $\sigma(u)=\Omega(u)$ for the co-rotation case. 

It is universally  known that the system \eqref{system} can be used to described the fluids coupling polymers. The system is of great interest in many branches of physics, chemistry, and biology, see \cite{Bird1977,Doi1988}. In this model, a polymer is idealized as an ``elastic dumbbell" consisting of two ``beads" joined by a spring that can be modeled by a vector $q$. The polymer particles are described by a probability distribution $\psi(t,x,q)$ satisfying that $\int_{\mathbb{R}^{d}}
\psi(t,x,q)dq =\int_{\mathbb{R}^d}\psi_0 dq$, which represents the distribution of particles' elongation vector $q\in \mathbb{R}^{d}$. Moreover, stress tensor $\tau$ is generated by the polymer particles effect. One can derive the following Oldroyd-B equation from system \eqref{system} with $\int_{\mathbb{R}^d}\psi_0 dq= 1$ :
\begin{align}\label{eqtau}
	\left\{\begin{array}{l}
		\partial_tu + u\cdot\nabla u+\nabla P = {\rm div}~\tau+\nu\Delta u,~~~~{\rm div}~u=0,~~~~~~~~~~~~~~~~\\[1ex]
		\partial_t\tau + u\cdot\nabla\tau+a\tau+Q(\nabla u,\tau)=b D(u)+\mu\Delta\tau,\\[1ex]
		u|_{t=0}=u_0,~~\tau|_{t=0}=\tau_0, \\[1ex]
	\end{array}\right.
\end{align}
with $Q(\nabla u, \tau)=\tau \Omega(u)-\Omega(u)\tau+b\left(D(u)\tau+\tau D(u)\right)$
and the co-rotation case means $b=0$. One can refer to  \cite{Bird1977,Doi1988,Masmoudi2008,Masmoudi2013} for more details.
\subsection{Reviews for the polymeric fluid models}

 Owing to its importance and challenging, the polymeric fluids have been extensitively investigated in recent decades. In the following paragraphs, we will review some impressive results from the mathematical analysis of the polymeric fluid models.

Take $\nu>0$ and $\mu=0$ in systems \eqref{system} and \eqref{eqtau}. The local well-posedness in Sobolev spaces with potential $\mathcal{U}(q)=(1-|q|^2)^{1-\sigma}$ for $\sigma>1$ was firstly investigated by M. Renardy \cite{Renardy}. Later, the local existence of a stochastic differential equation with potential $\mathcal{U}(q)=-k\log(1-|q|^{2})$ and $k>3$ for a Couette flow was proven by B. Jourdain et
al. \cite{Jourdain}. By virtue of defect measure propagation method, P. L. Lions and N. Masmoudi \cite{Lions-Masmoudi} showed the strong convergence of an approximating sequence and constructed global weak solutions for the Oldroyd-B model in the co-rotation case. Furthermore, global weak solutions to the FENE model and the FENE-P model were established by similar methods $(\text{see}~ \cite{Masmoudi2011,Masmoudi2013})$. By establishing new a priori estimation for 2D Navier-Stokes system
and a losing derivative estimate for the transport equation, J. Y. Chemin and N. Masmoudi \cite{lifespan} gave a sufficient condition of non-breakdown for an incompressible viscoelastic fluid of the Oldroyd type. We remark that these estimates are of great significance for proving the strong solutions of viscoelastic fluids. Under the co-rotational condition, the global well-posedness for 2D polymeric fluid models without any small assumptions on the initial data was obtained by N. Masmoudi et al. \cite{Masmoudi2dfluid}. In addition, Z. Lei et al. \cite{Z.Leinewmethod} advanced a new method to improve the blow-up criterion for viscoelastic systems of Oldroyd type given in \cite{lifespan}. We mention that this new method is much simpler and can be extensively used to solve other problems involving the prior losing derivative estimate.

Take $\nu=0$ and $\mu>0$ in systems \eqref{system} and \eqref{eqtau}. T. M. Elgindi and F. Rousset \cite{2015Elgindi} proved global regularity for the 2-D Oldroyd-B type model \eqref{eqtau}. Later on, T. M. Elgindi and J. Liu \cite{2015Elgindi1} obtained global strong solutions of \eqref{eqtau} under small initial data in Sobolev spaces when $d=3$. Regarding the 2-D co-rotational Oldroyd-B type model and its corresponding Hooke dumbbell model, the global existence with a class of large initial data was proven by W. Deng et al. \cite{DLY}.

The long time behavior for polymeric fluid models is of great concern by N. Masmoudi \cite{2016Masmoudi}. Take $\nu>0$ and $\mu=0$ in system \eqref{system}. The long time decay of the $L^2$ norm to the incompressible Hooke dumbbell models was studied by L. He and P. Zhang \cite{He2009}. They founded that the solutions tends to the equilibrium by $(1+t)^{-\frac{3}{4}}$ when the initial perturbation is additionally bounded in $L^1$. Recently, the $L^2$ decay of the velocity $u$ to the co-rotational FENE dumbbell model with potential $\mathcal{U}(R)=-k\log(1-(\frac{|R|}{|R_{0}|})^{2})$ was studied by M. Schonbek \cite{Hookeweak7}. She proved that velocity $u$ tends to zero in $L^2$ by  $(1+t)^{-\frac{d}{4}+\frac{1}{2}}$ $(d\geq 2)$ with the additional assumption that $u_0\in L^1$. Moreover, she conjectured
that the sharp decay rate should be $(1+t)^{-\frac{d}{4}}$. However, she failed to prove it because she could not use the bootstrap argument as that of \cite{Schonbek1985} caused by the additional stress tensor. More recently, this result was improved by W. Luo and Z. Yin in \cite{Luo-Yin,Luo-Yin2}, wherein they showed that the optimal long time decay rate of velocity $u$ in $L^2$ is $(1+t)^{-\frac{d}{4}}$. Regarding the 2-D Oldroyd-B type model \eqref{eqtau} with $\nu=0$ and $\mu>0$, W. Deng et al. \cite{DLY} showed the long time decay rate in $H^1$ for the global solutions constructed by T. M. Elgindi and F. Rousset in \cite{2015Elgindi}.

\subsection{Main results}
Let $\sigma(u)=\Omega$. One can verify that $(0,\psi_{\infty})$ with $\psi_{\infty}(q)=e^{-\frac{1}{2}|q|^2}$
is a trivial solution of system \eqref{system}. Take $\nu=a=1$ and $\mu=0$ in system \eqref{system}. Considering the perturbations near the global equilibrium
\begin{align*}
	~~~~~~~u=u~~ and ~~g=\frac {\psi-\psi_\infty} {\psi_\infty},
\end{align*}
we can rewrite system \eqref{system} as follows :
\begin{align}\label{eq0}
	\left\{
	\begin{array}{ll}
		\partial_t u + {\rm div}~(u\otimes u)-\Delta u + \nabla P = {\rm div}~\tau(g),~~~{\rm div}~u=0,\\
		\partial_t g + {\rm div}~(ug)-\mathcal{L} g= \frac{1}{\psi_{\infty}}{\rm div}_q[\Omega\cdot qg\psi_{\infty}],\\
		\tau(g) = \int_{\mathbb{R}^d} q\otimes \nabla_q \mathcal{U} g\psi_{\infty} dq,
	\end{array}
	\right.
\end{align}
where $\mathcal{L} g=\frac{1}{\psi_{\infty}}{\rm div}_ q[\nabla_q g\psi_{\infty}]$.
\begin{defi}\label{weakdefi}
Set $\varphi\in \mathscr{D}\left([0,T)\times\Lambda\right)$ and $\Phi\in \mathscr{D}\left([0,T)\times\Lambda\times\mathbb{R}^d\right)$ where $\Lambda = \mathbb{T}^d \ or \ \mathbb{R}^d$, then $(u,g)$ is said to be a weak solution for system \eqref{eq0} if the following conditions hold for any $T>0$ :\\
$(a)$ The velocity field u satisfies :
$$u\in C([0,T);L^2_w)\cap L^{\infty}([0,T);L^2)\cap L^2(0,T;\dot{H}^1).$$
For any $q\in[1,\infty)$ and $r=\frac{dq}{dq-2}$, the pressure $P$ satisfies : 
$$P\in L^1([0,T);W^{2,1})\cap L^2([0,T);W^{1,1})\cap L^q([0,T);L^r) + C([0,T);L^2).$$
Moreover, the stress tensor $\tau$ and the probability distribution $g$ satisfies :
 $$\tau\in C([0,T);L^2),~~~g\in C([0,T);L^2(\mathcal{L}^2)),~~~\nabla_qg\in L^2([0,T);L^2(\mathcal{L}^2)).$$ 
$(b)$ For any text function $\varphi$ and $\Phi$, it follows that
\begin{align*}
	\int_{\mathbb{R}}\int_{\Lambda} u\cdot(\partial_t\varphi +  u\cdot\nabla\varphi) - \nabla u\nabla\varphi + P~{\rm div}~\varphi dxdt+ \int_{\Lambda} u_0\varphi_0dx=  \int_{\mathbb{R}}\int_{\Lambda} \tau(g):\nabla \varphi dxdt,
\end{align*}
and
\begin{align*}
	\int_{\mathbb{R}}\int_{\Lambda}\int_{\mathbb{R}^d} \psi_{\infty}g(\partial_t\Phi + u\cdot\nabla\Phi)  - \nabla_q g\psi_{\infty}\cdot\nabla_q\Phi +\Omega\cdot q \psi_{\infty}g\cdot\nabla_q\Phi dqdxdt =-\int_{\Lambda} \int_{\mathbb{R}^d}\psi_{\infty}g_0\Phi_0dqdx.
\end{align*}
\end{defi}
~~

Our main results can be stated as follows.
\begin{theo}\label{th1}
Let $d = 2,3$ and $\Lambda = \mathbb{T}^d \ or \ \mathbb{R}^d$. Assume that a divergence-free field $u_0\in L^2$, $\int_{\mathbb{R}^d}g_0\psi_{\infty}dq=0$ and $\langle q\rangle g_0 \in (L^2\cap L^{\infty})(\mathcal{L}^2)$. Then system \eqref{eq0} admits a global weak solution $(u,g)$ in the sense of Definition \ref{weakdefi}. Moreover,
$u\in L^{\infty}(\mathbb{R}^{+};L^2)\cap L^{2}(\mathbb{R}^{+};\dot{H}^1)$, $\langle q\rangle g \in C\left([0,\infty);(L^2\cap L^{\infty})(\mathcal{L}^2)\right)$ and $\langle q\rangle\nabla_qg \in L^2\left(\mathbb{R}^{+};L^2(\mathcal{L}^2)\right)$.
\end{theo}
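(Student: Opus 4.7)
The plan is to construct the weak solution by approximation plus the defect-measure propagation method of Lions and Masmoudi. I first build a family of smooth approximate solutions $(u^{n},g^{n})$ by mollifying the initial data and regularizing the coupling term (for instance by truncating $\Omega\cdot q\,g\psi_{\infty}$ with a cut-off $\chi(|q|\le R_{n})$), then solve each regularized system by a Galerkin or fixed-point argument. The key a priori identity comes from the co-rotation structure: since $\Omega$ is antisymmetric, $\mathrm{tr}\,\Omega=0$ and $\Omega q\cdot q=0$, so testing the $g$-equation against $g\psi_{\infty}$ and integrating by parts in $q$ annihilates the rotation term. Coupling this with the standard $L^{2}$ energy identity for $u$ yields
\begin{align*}
\tfrac{d}{dt}\bigl(\tfrac{1}{2}\|u\|_{L^{2}}^{2}+\tfrac{1}{2}\|g\|_{L^{2}(\mathcal{L}^{2})}^{2}\bigr)+\|\nabla u\|_{L^{2}}^{2}+\|\nabla_{q}g\|_{L^{2}(\mathcal{L}^{2})}^{2}=-\int \tau(g):\nabla u\,dx,
\end{align*}
whose right-hand side is absorbed via $\|\tau(g)\|_{L^{2}}\lesssim\|g\|_{L^{2}(\mathcal{L}^{2})}$ (the weight $|q|^{2}\psi_{\infty}$ being integrable) and Gronwall, giving the uniform bounds $u^{n}\in L^{\infty}L^{2}\cap L^{2}\dot{H}^{1}$ and $g^{n},\,\nabla_{q}g^{n}\in L^{2}(L^{2}(\mathcal{L}^{2}))$.

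The weighted bounds $\langle q\rangle g\in L^{\infty}_{t}L^{2}(\mathcal{L}^{2})$ follow by testing the $g$-equation against $\langle q\rangle^{2}g\psi_{\infty}$; the co-rotation cancellation $\Omega q\cdot\nabla_{q}\langle q\rangle^{2}=0$ is again decisive since it prevents the rotation term from producing $q$-growth. The $L^{\infty}_{t,x,q}$ bound on $\langle q\rangle g$ comes from a parabolic maximum-principle/iteration argument applied to the equation satisfied by $\langle q\rangle g$, once more exploiting that the transport coefficient in the $q$-radial direction vanishes. These uniform estimates, together with the $u^{n}$-equation, supply enough control on $\partial_{t}u^{n}$ to invoke Aubin--Lions and obtain strong convergence $u^{n}\to u$ in $L^{2}_{\mathrm{loc}}(L^{2})$ with $\nabla u^{n}\rightharpoonup\nabla u$ only weakly, plus weak convergence $g^{n}\rightharpoonup g$ in $L^{2}(L^{2}(\mathcal{L}^{2}))$ and $\nabla_{q}g^{n}\rightharpoonup\nabla_{q}g$. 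Since $\tau$ is linear in $g$, it passes to the limit in the Navier--Stokes weak form without further work.

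The critical difficulty is the rotation term $\Omega(u^{n})\cdot q\,g^{n}$ in the Fokker--Planck equation, which pairs a weakly convergent $\nabla u^{n}$ with a weakly convergent $g^{n}$ and must be identified with the correct product in the limit. Following Lions--Masmoudi, I would derive the equation satisfied by $(g^{n})^{2}$, pass to the weak limit to obtain an evolution equation for $\overline{g^{2}}$, subtract the one formally satisfied by $g^{2}$ (obtained by chain-ruling the limit equation), and study the defect measure $\nu:=\overline{g^{2}}-g^{2}\ge 0$. The co-rotation identity $\Omega q\cdot q=0$ eliminates the rotation contribution from the $\nu$-equation, leaving a transport--diffusion inequality with zero initial data; hence $\nu\equiv 0$, which upgrades $g^{n}\to g$ to strong convergence in $L^{2}(L^{2}(\mathcal{L}^{2}))$ and permits passing to the limit in all nonlinear terms, thereby verifying Definition \ref{weakdefi}. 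The hardest part I anticipate is the defect-measure step over the unbounded configuration space $\mathbb{R}^{d}_{q}$: rigorously justifying the chain-rule computation $\partial_{t}(g^{n})^{2}=2g^{n}\partial_{t}g^{n}$ at the weak level, controlling the $q$-integrals uniformly in $n$ (where the $L^{\infty}$ moment bound on $\langle q\rangle g$ is pivotal), and confirming that the rotation term integrates to exactly zero against the defect rather than producing an uncontrolled remainder.
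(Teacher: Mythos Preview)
Your a priori estimates and the overall plan are sound, but the defect-measure step has a genuine gap. You assert that ``the co-rotation identity $\Omega q\cdot q=0$ eliminates the rotation contribution from the $\nu$-equation''. This is not correct. The cancellation $\int g\,\Omega q\cdot\nabla_{q}g\,\psi_{\infty}\,dq=0$ holds for a \emph{fixed} pair $(\Omega,g)$, but it does not survive passage to the limit: since $\nabla u^{n}\rightharpoonup\nabla u$ and $g^{n}\rightharpoonup g$ only weakly, the weak limit of $\Omega^{n}q\,g^{n}$ is $\Omega q\,g+\tilde\beta$ for some $q$-dependent defect $\tilde\beta$. Consequently the limit $g$ satisfies the Fokker--Planck equation with an \emph{extra} source $\psi_{\infty}^{-1}\nabla_{q}\!\cdot\!(\tilde\beta\,\psi_{\infty})$; when you renormalize to obtain the $\|g\|_{\mathcal{L}^{2}}^{2}$-equation, the genuine $\Omega$-part vanishes by antisymmetry but the $\tilde\beta$-part does not, because $\tilde\beta$ depends on $q$. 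Subtracting from the limit of $\|g^{n}\|_{\mathcal{L}^{2}}^{2}$ leaves
\[
\partial_{t}\tilde\eta+\mathrm{div}\,(u\,\tilde\eta)+2\|\kappa\|_{\mathcal{M}(q)}
=\int_{\mathbb{R}^{d}}(\beta_{ij}-\beta_{ji})\,\frac{q_{j}}{\langle q\rangle}\,\nabla_{q}^{i}g\,\psi_{\infty}\,dq,
\]
and nothing in your outline controls the right-hand side in terms of $\tilde\eta$, so the Gronwall loop does not close.

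The mechanism you are missing is a second defect identity coming from the \emph{velocity} equation. In the paper one decomposes $u=u_{1}+u_{2}+u_{3}$ (Stokes flows forced by $-\mathrm{div}(u\otimes u)$, by $0$, and by $\mathrm{div}\,\tau$) so that $\nabla u_{1}^{n},\nabla u_{2}^{n}$ converge strongly and the entire lack of compactness of $\nabla u^{n}$ sits in $\nabla u_{3}^{n}$. Comparing the limit of $|u_{3}^{n}|^{2}$ with $|u_{3}|^{2}$ then yields the measure identity $\mu=-\int\beta_{ji}\,\tfrac{q_{i}\nabla_{j}\mathcal{U}}{\langle q\rangle}\psi_{\infty}\,dq$, where $\mu$ is the defect of $|\nabla u_{3}^{n}|^{2}$ and $\beta$ that of $\langle q\rangle g^{n}\nabla u_{3}^{n}$. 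This forces $\mu\le C\sqrt{\mu}\sqrt{\alpha}$, hence $\mu\le C\tilde\eta$, and therefore $\bigl(\int|\beta|^{2}\langle q\rangle^{-2}\psi_{\infty}\,dq\bigr)^{1/2}\lesssim\tilde\eta$. Only then is the right-hand side above bounded by $C\|\langle q\rangle\nabla_{q}g\|_{\mathcal{L}^{2}}\,\tilde\eta$, and one concludes $\tilde\eta\equiv0$ via the DiPerna--Lions flow and Gronwall along characteristics, using that $\tilde\eta\in L^{\infty}_{t,x}$ --- which is precisely where the hypothesis $\langle q\rangle g_{0}\in L^{\infty}(\mathcal{L}^{2})$ enters. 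Without this $\mu$--$\beta$--$\eta$ chain the argument cannot be completed.
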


\begin{rema}
It follows from Lemma \ref{Lemma1} (see Section 2 below) that $\|\langle q\rangle g_0\|_{L^2(\mathcal{L}^2)} \leq C\| \nabla_qg_0\|_{L^2(\mathcal{L}^2)}$. Taking a divergence-free field $u_0\in L^2$ and $\nabla_qg_0 \in (L^2\cap L^{p})(\mathcal{L}^2)$ for some $p\geq4$, global existence of system \eqref{eq0} can be proven by the same way as the proof of Theorem \ref{th1}. However, the method shown in theorem above does not achieve the same result for the more critical cases $p\in (2,4)$, which needs to be solved by deriving a new renormalized equation introduced in Section 4. The main result is given in the following theorem.
\end{rema}

\begin{theo}\label{th2}
Let $d = 2,3$ and $\Lambda = \mathbb{T}^d$ or $\mathbb{R}^d$. Assume that a divergence-free field $u_0\in L^2$, $\int_{\mathbb{R}^d}g_0\psi_{\infty}dq=0$ and $\nabla_qg_0 \in (L^2\cap L^{p})(\mathcal{L}^2)$ for some $p>2$. Then system \eqref{eq0} admits a global weak solution $(u,g)$ in sense of Definition \ref{weakdefi}. Furthermore,
$u\in L^{\infty}(\mathbb{R}^{+};L^2)\cap L^{2}(\mathbb{R}^{+};\dot{H}^1)$, $\langle q\rangle g\in C([0,\infty);(L^2\cap L^{p})(\mathcal{L}^2))$ and $\nabla_qg \in C([0,\infty);(L^2\cap L^{p})(\mathcal{L}^2))$.
\end{theo}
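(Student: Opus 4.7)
My strategy mirrors that of Theorem \ref{th1}: construct approximations $(u_n,g_n)$ with smoothed data, establish uniform bounds on each finite time interval, and pass to the limit by the defect-measure propagation method of Lions--Masmoudi \cite{Lions-Masmoudi,Masmoudi2011,Masmoudi2013}. The new feature, needed because the hypothesis $\nabla_qg_0\in(L^2\cap L^p)(\mathcal{L}^2)$ with $p\in(2,4)$ does not give the $L^\infty_q$ bound on $g_0$ used in Theorem \ref{th1}, is to propagate the $L^p(\mathcal{L}^2)$ norm of $\nabla_qg$ via the new renormalised equation announced in Section~4.

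The basic $L^2$ bound for the approximating sequence comes from testing the $u$-equation against $u_n$ and the $g$-equation against $g_n\psi_\infty$: the transport term $\Omega q\cdot\nabla_qg_n$ contributes zero because $\mathrm{tr}\,\Omega=0$ and $\Omega q\cdot q=0$; the stress-velocity coupling $\int\mathrm{div}\,\tau(g_n)\cdot u_n\,dx$ is bounded by $\|g_n\|_{L^2(\mathcal{L}^2)}\|\nabla u_n\|_{L^2}$ and absorbed via the weighted Poincar\'e inequality of Lemma~\ref{Lemma1} (using the zero-mean condition $\int g_0\psi_\infty dq=0$). This yields $u_n$ uniformly in $L^\infty_tL^2\cap L^2_t\dot H^1$ and $g_n$ uniformly in $L^\infty_tL^2(\mathcal{L}^2)\cap L^2_tH^1_q(\mathcal{L}^2)$ on any finite time interval.

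For the $L^p$ bound, differentiating the $g$-equation in $q$ gives
\begin{equation*}
\partial_t(\nabla_qg)+u\cdot\nabla_x(\nabla_qg)-\mathcal{L}(\nabla_qg)+\nabla_qg=\Omega^T\nabla_qg+\Omega q\cdot\nabla_q(\nabla_qg),
\end{equation*}
in which the absorbing term $+\nabla_qg$ is produced by the commutator $[\nabla_q,\mathcal{L}]=-\mathrm{Id}$. Testing against $|\nabla_qg|^{p-2}\nabla_qg\,\psi_\infty$, incompressibility kills the transport, the Fokker--Planck dissipation is nonnegative, the pointwise antisymmetric identity $\Omega^T\nabla_qg\cdot(|\nabla_qg|^{p-2}\nabla_qg)=0$ disposes of the first right-hand term, and an integration by parts in $q$ together with $\mathrm{tr}\,\Omega=\Omega q\cdot q=0$ disposes of the second; what survives is the clean differential inequality $\tfrac1p\tfrac d{dt}\|\nabla_qg\|_{L^p(\mathcal{L}^2)}^p+\|\nabla_qg\|_{L^p(\mathcal{L}^2)}^p\le 0$. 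This computation is valid at the approximate level; to transfer it to weak limits in the critical range $p\in(2,4)$ the direct chain rule for $|\nabla_qg|^{p-2}\nabla_qg$ is not available, and I would work instead with the renormalised equation of Section~4, obtained through a DiPerna--Lions-type regularisation of the form $\beta_\delta(\nabla_qg)=(|\nabla_qg|^2+\delta)^{p/2}$, verifying that the antisymmetric cancellations above are stable under the regularisation and then sending $\delta\downarrow 0$. This step is the main technical obstacle.

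The passage to the limit then proceeds as in \cite{Lions-Masmoudi,Masmoudi2013}: Aubin--Lions gives strong compactness of $u_n$ in $L^2_{loc}$, while the defect measure of $g_n^2\psi_\infty$ is propagated and shown, thanks to the co-rotational structure, to satisfy a linear transport equation with vanishing initial data, hence it vanishes identically and $g_n\to g$ strongly in $L^2_{loc}(\psi_\infty\,dq\,dx\,dt)$. This convergence, combined with the uniform $L^p$ bound on $\nabla_qg_n$, suffices to pass to the limit in the nonlinearities $\mathrm{div}(u_n\otimes u_n)$, $\mathrm{div}(u_ng_n)$ and $\tau(g_n)$; time continuity in $(L^2\cap L^p)(\mathcal{L}^2)$ then follows from the equation and the uniform bound in a standard fashion.
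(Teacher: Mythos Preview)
Your a priori estimates are correct and match Propositions~\ref{prop1}--\ref{Conservation}. But you have misidentified both the location of the difficulty and the content of the ``new renormalised equation''. The renormalisation is \emph{not} a device to justify the chain rule for $|\nabla_qg|^{p-2}\nabla_qg$ on the weak limit; the $L^p(\mathcal L^2)$ bound on $\nabla_qg$ is needed only at the approximate level, where your computation is perfectly legitimate. The genuine obstacle lies in the defect-measure step, which you dismiss in one line.

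Concretely: the defect $\tilde\eta=\int_{\mathbb R^d}\eta\,\psi_\infty\,dq$ (with $|g^n-g|^2\rightharpoonup\eta$) does \emph{not} satisfy a linear transport equation. Writing down the limit of $\partial_t\|g^n\|_{\mathcal L^2}^2+\cdots=0$ and subtracting the renormalised equation for $g$ (Proposition~\ref{renormalized2}) produces, exactly as in Theorem~\ref{th1},
\[
\partial_t\tilde\eta+\mathrm{div}(u\tilde\eta)+2\|\kappa\|_{\mathcal M(q)}
=\int_{\mathbb R^d}(\beta_{ij}-\beta_{ji})\tfrac{q_j}{\langle q\rangle}\nabla_q^ig\,\psi_\infty\,dq,
\]
and the right-hand side is controlled by $C\|\langle q\rangle\nabla_qg\|_{\mathcal L^2}\,\tilde\eta$ only after one has established the measure identity $\mu=-\int\beta_{ji}\tfrac{q_i\nabla_j\mathcal U}{\langle q\rangle}\psi_\infty\,dq$ coming from the $u_3$-equation (note $\nabla u_3^n$ is \emph{not} compact; Aubin--Lions gives nothing here). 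In Theorem~\ref{th1} one then closes by the Mild formulation along the DiPerna--Lions flow (Lemma~\ref{Mild}), which requires $\tilde\eta\in L^\infty_{t,x}$. Under your hypothesis one has only $\tilde\eta\in L^\infty_t(L^{p/2}_x)$, so this step fails outright.

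The paper's actual innovation is to replace $\tilde\eta$ by the bounded quantity $\tilde\eta/N^2$, where $N^2=\|g\|_{\mathcal L^2}^2+\tilde\eta+1$ is the weak limit of $\|g^n\|_{\mathcal L^2}^2+1$. One derives an equation for $N$, combines it with a $\delta$-truncated version $\tilde\eta_\delta$ (the defect of $\|g^n\|_{\mathcal L^2}^2/(1+\delta\|g^n\|_{\mathcal L^2}^2)$), and sends $\delta\to0$. The resulting inequality for $\tilde\eta/N^2$ contains a dissipative term $2(N^{-2}-\tilde\eta N^{-4})\tilde\kappa\ge0$, but verifying this sign requires knowing that the truncated dissipation defects $\tilde\kappa_\delta$ converge to $\tilde\kappa=\int\kappa\,\psi_\infty\,dq$ as $\delta\to0$ (Lemma~\ref{3lemm1}). \emph{This} is where the $L^p$ bound on $\nabla_qg^n$ is spent: it yields equi-integrability of $\|\nabla_qg^n\|_{\mathcal L^2}^2$ in $x$ (Proposition~\ref{3prop2}), which is exactly what makes the limit $\tilde\kappa_\delta\to\tilde\kappa$ go through. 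Your proposal never touches this mechanism.
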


\begin{rema}
	Take $g=0$, then the
	Hooke dumbbell model \eqref{eq0} is reduced to the Navier-Stokes equation. Theorems \ref{th1} and \ref{th2} cover the J. Leray celebrated results about global existence of weak solutions to the Navier-Stokes equation \cite{Leray, lions1996, lions1998}. When $d=2$, the high integrability of $g$ yield the results about uniqueness and further regularity, see \cite{Masmoudi2dfluid}.
\end{rema}

\begin{theo}\label{th3}
	Let $d = 2,3$ and $\Lambda = \mathbb{R}^d$. Let $(u,g)$ be global weak solution constructed in Theorem \ref{th1} or Theorem \ref{th2}. Suppose $u_0\in L^1$ and $g_0 \in L^1(\mathcal{L}^2)$, then there exist constants $c$ and $C$ such that
\begin{align}
~~\|u\|_{L^2}  \leq C(1+t)^{-\frac{d}{4}} \  \ and \ \ \ \|g\|_{L^2(\mathcal{L}^2)} \leq Ce^{-2ct}.
\end{align}
Moreover, if $\int_{\mathbb{R}^d}u_0dx \neq 0$ and $\|(u_0,\|g_0\|_{\mathcal{L}^2})\|_{L^2} \leq \delta$ for some small constant $\delta$, then there exists a constant $c$ such that
\begin{align}
	~~~~~~~\|u\|_{L^2}  \geq c(1+t)^{-\frac{d}{4}}.
\end{align}
\end{theo}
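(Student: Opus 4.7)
The plan is to use the Gaussian Poincaré inequality for the Fokker–Planck operator to obtain exponential decay of $g$, then apply Schonbek's Fourier splitting method to the momentum equation — in which the stress tensor acts as a fast-decaying source — to derive the algebraic upper bound on $\|u\|_{L^2}$, and finally extract a matching lower bound from the heat-kernel piece of Duhamel's formula.

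For the decay of $g$, I multiply the second equation of \eqref{eq0} by $g\psi_{\infty}$ and integrate over $\Lambda\times\mathbb{R}^d$. The transport term vanishes from $\mathrm{div}\,u=0$ (noting that $\psi_{\infty}$ is $x$-independent); the Fokker–Planck piece contributes the dissipation $\|\nabla_q g\|_{L^2(\mathcal{L}^2)}^2$; and the rotation term $-\int \Omega q\cdot\nabla_q(g^2/2)\psi_{\infty}\,dqdx$ vanishes after one integration by parts in $q$, since $\mathrm{tr}\,\Omega=0$ and $\Omega_{ij}q_iq_j=0$ by antisymmetry. This yields $\tfrac{1}{2}\tfrac{d}{dt}\|g\|_{L^2(\mathcal{L}^2)}^2 + \|\nabla_q g\|_{L^2(\mathcal{L}^2)}^2=0$, and the Poincaré inequality for the Gaussian measure $\psi_{\infty}\,dq$ (valid because $\int g\psi_{\infty}\,dq=0$ is propagated in time) then produces the exponential decay $\|g\|_{L^2(\mathcal{L}^2)}\leq Ce^{-2ct}$.

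For the upper bound on $u$, the energy identity combined with $\|\tau(g)\|_{L^2}\lesssim\|\langle q\rangle g\|_{L^2(\mathcal{L}^2)}$ gives
$$\tfrac{d}{dt}\|u\|_{L^2}^2 + \|\nabla u\|_{L^2}^2 \leq Ce^{-ct},$$
once the weighted $L^2$ decay of $g$ is established by testing against $\langle q\rangle^2 g\psi_{\infty}$ (any extra $|q|^2$-growth generated by the Fokker–Planck part is absorbed by the Gaussian weight, with the rotation term again vanishing by antisymmetry). The Fourier splitting lower bound $\|\nabla u\|_{L^2}^2\geq \rho(t)^2\|u\|_{L^2}^2 - \rho(t)^2\int_{|\xi|\leq\rho(t)}|\hat u|^2\,d\xi$ with $\rho(t)^2=\alpha/(e+t)$ for some $\alpha>d/2$ then reduces matters to a uniform bound $|\hat u(t,\xi)|\leq C$ at low frequencies. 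This last estimate follows from Duhamel's formula using $u_0\in L^1$, $\|u\otimes u\|_{L^1}\leq \|u\|_{L^2}^2$ (bounded by the global energy estimate), and $\|\tau(g)\|_{L^1}\lesssim\|\langle q\rangle g\|_{L^1(\mathcal{L}^2)}$ — the latter propagated in time from $g_0\in L^1(\mathcal{L}^2)$. Solving the resulting differential inequality delivers $\|u\|_{L^2}^2\leq C(1+t)^{-d/2}$.

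For the lower bound, under $\int u_0\,dx\neq 0$ and the smallness hypothesis, I decompose $\hat u(t,\xi)=e^{-|\xi|^2 t}\hat u_0(\xi)+R(t,\xi)$, where $R$ gathers the Duhamel contributions of the pressure, the nonlinearity $u\otimes u$, and $\tau(g)$. Since $\hat u_0(0)\neq 0$, continuity of $\hat u_0$ at the origin gives $\|e^{t\Delta}u_0\|_{L^2}\geq c(1+t)^{-d/4}$. Using the upper bound from the previous step to control $\|\widehat{u\otimes u}\|_{L^{\infty}}\leq C(1+t)^{-d/2}$, together with the exponential decay of $\hat\tau(g)$ and the smallness of $u_0$, I then show $\|R(t,\cdot)\|_{L^2}=o((1+t)^{-d/4})$, and the triangle inequality yields the claimed lower bound. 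The hardest step is precisely this one: the bilinear contribution to $R$ could, in the absence of smallness, match the heat-kernel rate and destroy the lower bound, and closing the estimate requires a bootstrap that feeds the already established upper bound $(1+t)^{-d/2}$ back into the Duhamel integrals to force strictly faster decay of $R$.
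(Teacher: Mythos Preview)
Your treatment of the exponential decay of $g$ is fine and matches the paper. The lower-bound argument is also in the right spirit, though note that the remainder $R$ does \emph{not} decay strictly faster than $(1+t)^{-d/4}$: it decays at the same rate but with a constant proportional to the data size, which is why the smallness hypothesis is needed (the paper shows $\|R\|_{L^2}\le C\delta(1+t)^{-d/4}$ with $C\delta<c/2$).

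The real gap is in your upper bound for $\|u\|_{L^2}$. You claim that Duhamel's formula gives a uniform pointwise bound $|\hat u(t,\xi)|\le C$ at low frequencies using only $\|u\otimes u\|_{L^1}\le\|u\|_{L^2}^2\le C$ from the global energy estimate. This is false: the nonlinear contribution to $\hat u$ is
\[
\Bigl|\int_0^t e^{-|\xi|^2(t-s)}\,i\xi\cdot\widehat{\mathbb{P}(u\otimes u)}\,ds\Bigr|
\;\le\; C\int_0^t |\xi|\,e^{-|\xi|^2(t-s)}\,ds
\;=\;\frac{C}{|\xi|}\bigl(1-e^{-|\xi|^2 t}\bigr),
\]
which is $\min(|\xi|t,\,C/|\xi|)$ and hence not uniformly bounded. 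Plugged into Fourier splitting this yields at best the suboptimal Schonbek rate $(1+t)^{-d/4+1/2}$ for $d=3$ and no decay at all for $d=2$. The exponentially decaying stress $\tau$ causes no trouble here; the obstruction is purely the quadratic term $u\otimes u$, exactly as for Navier--Stokes.

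The paper closes this by a two-step bootstrap that you have omitted. Step~1 runs Fourier splitting with the slower radius $\rho(t)^2\sim f'(t)/f(t)$, $f(t)=\ln^3(e+t)$, and estimates the low-frequency mass of the \emph{coupled} quantity $|\hat u|^2+\lambda\|\hat g\|_{\mathcal L^2}^2$ (the coupling absorbs $|\hat\tau|^2$ into the $g$-dissipation). This yields the preliminary bound $\|u\|_{L^2}^2\le C\ln^{-3}(e+t)$. Step~2 then runs Fourier splitting again with $\rho(t)^2\sim(1+t)^{-1}$; defining $M(t)=\sup_{s\le t}(1+s)^{d/2}\|u(s)\|_{L^2}^2$, one reaches the Gronwall inequality
\[
M(t)\;\le\;C+C\int_0^t M(s)\,(1+s)^{-d/2}\ln^{-3}(e+s)\,ds,
\]
and the logarithmic factor from Step~1 is exactly what makes the kernel integrable when $d=2$ (since $\int_0^\infty(1+s)^{-1}\ln^{-3}(e+s)\,ds<\infty$). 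Without that preliminary logarithmic decay the Gronwall closes only for $d=3$, and even then an iteration is required. Your proposal skips this entire mechanism.
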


\begin{rema}
	In \cite{Schonbek1991}, M. Schonbek proved that $(1+t)^{-\frac{d}{4}}$ $(d=2,3)$ is the optimal decay rate in $L^2$ for the
	Navier-Stokes equations with the additional low frequency condition $u_0\in L^1$. Theorem \ref{th3} covers the M. Schonbek results of optimal decay rate in $L^2$ of weak solutions to the Navier-Stokes equation.
\end{rema}

\subsection{Motivation and main idea}
An open problem raised by N. Masmoudi in paper \cite{Masmoudi2013} is whether the Hooke dumbbell model admits a global weak solution to the general initial data in $L^2(\mathbb{R}^d)$ ?

For the general case where $\sigma(u) = \nabla(u)$, N. Masmoudi cannot use the same method as that of \cite{Masmoudi2013} to exclude the possibility of concentrated measures in nonlinear limits without the estimate of the $L^2(\mathbb{R}^d)$ norm to the stress tensor $\tau (g)$. There seems to be a long way to go from the conservation law of the known stress tensor $\tau(g)$ to its desired $L^2(\mathbb{R}^d)$-norm estimate. So far, there is still no effective method to address this issue.

For the co-rotation case where $\sigma(u) = \Omega$, due to the infinite elongation of the micro quantity $q$ in the main nonlinear term $\nabla_q\cdot(\sigma(u)\cdot q\psi_{\infty}g)$, it may lead to the generation of micro concentrated measures in the nonlinear limit. Secondly, unlike the handling of the dissipation term in paper \cite{Lions-Masmoudi}, the microscopic dissipation term $\mathcal{L}(g)$ may also lead to the generation of micro concentrated measures. In the renormalization equation, the sum of the concentrated measures generated by $\mathcal{L}(g)$ is not known to be positive or negative, which cannot be treated as an absorption term in the final estimation. To compensate for the lack of regularity at the microscopic level, we need to establish new a prior estimates using microscopic weights and construct new renormalization factors to obtain the required nonlinear limits under lower integrable initial conditions.\\

In this paper, we mainly focus on the following two issues :

1. Whether the Hooke dumbbell model admits a global weak solution to the general initial data under the co-rotation case ?

2. How to derive the decay rate of weak solutions obtained and further prove that the decay rate derived is optimal ?\\

Our main strategy for the two issues above are as follows :\\

Firstly, we focus on the analysis of the compactness of the velocity field $u$ :\\

As done in Section 2, we decompose velocity field $u = u_1 + u_2 + u_3$, where $u_1$, $u_2$, and $u_3$ satisfy the heat equations \eqref{eq1}, \eqref{eq2}, and \eqref{eq3} coupled with different nonlinear terms, respectively. Based on known energy conservation, we can derive an estimate of $u \cdot\nabla u$ in Lebesgue space. By using the standard heat kernel estimation, the estimates of $u_1$ and $u_2$ up to the second-order derivative in Lebesgue space can be obtained. From the compact embedding theorem, it is easy to obtain the strong convergence properties of $\nabla u_1$ and $\nabla u_2$ in Lebesgue space. However, for the velocity field $u_3$, the available prior estimates can only guarantee the boundedness of stress tensor $\tau(g)$ in the Lebesgue space. Therefore, the external force ${\rm div}~\tau(g)$ coupled by the heat equation \eqref{eq3} is only bounded in the first-order negative Lebesgue space merely. According to the standard heat kernel estimation, what we can obtain merely is the estimate of $u_3$ up to the first derivative in Lebesgue space, which is not sufficient to ensure the strong convergence property of $\nabla u_3$ in Lebesgue space. Anyway, ${\rm div}~\tau(g)$ does lead to the lack of compactness in the first-order derivative of the velocity field $\nabla u$.

Therefore, in order to obtain the weak convergence of the nonlinear term $\nabla_q\cdot(\sigma(u)\cdot q\psi_{\infty}g)$, what we can rely on is the strong convergence of probability distribution $g$. \\

Secondly, we focus on the analysis of the compactness of the probability distribution $g$ :\\

Consider defect measure $\eta$ such that $|g^n-g|^2 \rightarrow \eta$. We next aim to obtain $\eta = 0$. Our first step is to derive the possible defect measures that may arise in nonlinear convergence. There are the concentrated measures $\mu$ derived from $|\nabla (u^n - u)|^2$ and $\kappa$ derived from macroscopic dissipation term $|\nabla_q g^n - \nabla_q g|^2$, followed by the oscillated measures $\alpha$ derived from stress tensor $|\tau^n - \tau|^2$ and $\beta$ derived from mesoscopic coupling term $\langle q\rangle g^n \nabla u^n - \langle q\rangle g\nabla u$. Below, we provide different strategies for initial conditions under different integrability.\\

\textbf{For the case of initial data $\langle q\rangle g_0 \in (L^2 \cap L^\infty)(\mathcal{L}^2)$ :}\\

Referring to the significant mathematical discovery by P. L. Lions regarding the effective viscous pressure $\rm P_{eff}$ (see \cite{Feireisl2004,2Lions1998}), the following measure's identity is derived by renormalizing the equation $\eqref{eq0}_1$ : 
$$ \mu = -\int_{\mathbb{R}^d} \beta_{ji} \frac{q_i}{\langle q\rangle} \nabla_j\mathcal{U} \psi_{\infty} dq.
$$
This measure's identity can eliminate the concentration of measure $\mu$. That is to say, the measure $\mu$ derived from $| \nabla(u^n - u) |^2$ is an oscillated measure indeed. Based on the measure's analysis in detail by virtue of the measure's identity above, the following effective inequality of measures can be obtained :
$$
\left(\int_{\mathbb{R}^d} \langle q\rangle^2 |\beta|^2 \psi_{\infty}dq\right)^{\frac{1}{2}} \lesssim \int \eta \psi_{\infty} dq.
$$
According to the inequality obtained, it can be inferred that the oscillated measure $\eta$ is the largest among the oscillated measures involved. In other words, the oscillated measures $\alpha$, $\beta$ and $\mu$ can all be controlled by $\eta$. In order to observe the dynamic behavior of oscillated measure $\eta$, an attempt was made to identify the developmental equation that $\int_{\mathbb{R}^d} \eta \psi_{\infty} dq$ satisfies. By renormalizing the equation $\eqref{eq0}_2$, a sufficiently integrable renormalization equation satisfied by $\int_{\mathbb{R}^d} \eta \psi_{\infty} dq$ is derived:
$$
\partial_t \left(\int_{\mathbb{R}^d} \eta \psi_{\infty} dq\right) + {\rm div}~
\left(\int_{\mathbb{R}^d} \eta \psi_{\infty} dq\right) + 2\|\kappa\|_{\mathcal{M}(q)} = \int_{\mathbb{R}^d} (\beta_{ij}-\beta_{ji}) \frac{q_j}{\langle q\rangle} \nabla^i_q g \psi_{\infty} dq \in L^1_T(L^1_x).
$$
Moreover, Lemma \ref{Lions} admits a unique Diperna-Lions flow $X (t, x)$ such that $$
\frac{\partial}{\partial t}X(t,x)=u(t,X(t,x)).
$$ According to Mild formulation shown in Lemma \ref{Mild} and $\int_{\mathbb{R}^d} \eta \psi_{\infty}dq \in L^{\infty}((0, T)\times\Lambda)$ in hand, it can be inferred that 
$$
a.e.~~ x \in \Lambda,~~\int_{\mathbb{R}^d} \eta \psi_{\infty} dq (t, X(t, x)) \in BV(0, T).
$$
That is to say, we can observe the dynamic behavior of $\eta$ point by point under Diperna-Lions flow $X (t, x)$. It is continued to process the equation satisfied by $\int_{\mathbb{R}^d} \eta \psi_{\infty} dq$. Based on the inequality of measures mentioned above, we deduce that
$$
\int_{\mathbb{R}^d} (\beta_{ij}- \beta_{ji}) \frac{q_j}{\langle q\rangle} \nabla^i_q g \psi_{\infty} dq \lesssim \|\langle q\rangle\nabla_qg\|_{\mathcal{L}^2} \int_{\mathbb{R}^d} \eta \psi_{\infty} dq,~~a.e.~~x \in \Lambda.
$$
Combining the developmental equation satisfied by $\int_{\mathbb{R}^d} \eta \psi_{\infty} dq$, $\int_{\mathbb{R}^d} \eta \psi_{\infty} dq (t, X(t, x))$ satisfies the following inequality under the Diperna-Lions flow $X (t, x)$ :
$$
0 \leq \int_{\mathbb{R}^d} \eta \psi_{\infty} dq (t, X(t, x)) \lesssim \int_{\mathbb{R}^d} \eta_0 \psi_{\infty} dq \cdot e^{ \int_0^t \|\langle q\rangle\nabla_qg\|_{L^2} ds },~~a.e.~~x \in \Lambda,
$$
where we have thrown the concentrated measure $\|\kappa\|_{\mathcal{M}(q)}$ away during the estimation process above since its positive. According to $\int_{\mathbb{R}^d} \eta_0 \psi_{\infty} dq=0$ and the uniqueness of Diperna-Lions flow $X(t, x)$, the standard transport equation theory ensures $\int_{\mathbb{R}^d} \eta \psi_{\infty} dq=0$, or equivalently $\eta(t,x) = 0,~~(t,x)\in[0,T] \times\Lambda$.\\

\textbf{For the case of initial data $\nabla_q g_0 \in (L^2 \cap L^p)(\mathcal{L}^2)$, for some $p\in(2,\infty)$ :}\\

Unlike the first case mentioned above, $\int_{\mathbb{R}^d} \eta \psi_{\infty} dq \notin L^\infty ((0, T)\times\Lambda)$ under lower integrable initial data. It is difficult to carry out $\eta = 0$ by observing $\int_{\mathbb{R}^d} \eta \psi_{\infty} dq$ through the Mild formula under Diperna-Lions flow. Therefore, we need to reconstruct a new renormalization factor and derive its renormalization equation with sufficient integrability to adapt to the conditions required by the Mild formula. Let's consider the renormalization equation satisfied by the oscillated measure $N$ derived from $\|g^n\|^2_{\mathcal{L}^2} + 1$ :
$$
\partial_t N + {\rm div} (uN) + \frac{1}{N} \int_{\mathbb{R}^d}|\nabla_qg|^2 \psi_{\infty} dq + \int_{\mathbb{R}^d} \kappa \psi_{\infty} dq = 0,~~~N^2 = \|g\|^2_{\mathcal{L}^2} + \int_{\mathbb{R}^d} \eta \psi_{\infty} dq + 1.
$$
Although it seems impossible for $p\neq\infty$ to derive the boundedness of the $L^{\infty}((0, T)\times\Lambda)$-norm of $\int_{\mathbb{R}^d} \eta \psi_{\infty} dq$, a wonderful idea is to consider the renormalization factor $\frac{\int_{\mathbb{R}^d} \eta \psi_{\infty} dq}{N^2}$ instead, which naturally satisfies the boundedness in $L^{\infty} ((0, T)\times\Lambda)$. According to renormalization shown in Section 3 in detail, the following equation satisfied by $\frac{\int_{\mathbb{R}^d} \eta \psi_{\infty} dq}{N^2}$ is obtained :
\begin{align*}
\partial_t \left(\frac{\int_{\mathbb{R}^d} \eta \psi_{\infty} dq}{N^2}\right) &+ {\rm div} \left(u\frac{\int_{\mathbb{R}^d} \eta \psi_{\infty} dq}{N^2}\right) + 2 \mathop{\lim}\limits_{\delta \rightarrow 0^+} G_{\delta} \\ \notag 
&= \frac{1}{N^2}\int_{\mathbb{R}^d}(\beta_{ji} - \beta_{ij})\frac{q_j}{\langle q\rangle} \nabla^i_q g \psi_{\infty} dq + 2\frac{\int_{\mathbb{R}^d} \eta \psi_{\infty} dq}{N^4} \int_{\mathbb{R}^d} |\nabla_qg|^2 \psi_{\infty} dq \in L^1_T(L^1_x).
\end{align*}
In general, the measure sequence $G_\delta$ generated by the microscopic dissipation term $\mathcal{L}(g)$ cannot be controlled by $\eta$ and would converge to a certain concentrated measure. However, its positive or negative is unknown, which implies that the limit of $G_\delta$ cannot be treated as what the concentrated measure $\|\kappa\|_{\mathcal{M}(q)}$ does in the first case. Fortunately, through our newly established a prior estimate with microscopic weight $\nabla_q$, we have compensated for the lack of integrability of $G_\delta$ at the microscopic level, which allowing measure sequence $G_\delta$ to eventually converge to a non-negative integrable function. It is continued to process the equation satisfied by $\frac{\int_{\mathbb{R}^d} \eta \psi_{\infty} dq}{N^2}$. Based on the inequality of measures mentioned in the first case, we deduce that
\begin{align*}
\frac{1}{N^2}\int_{\mathbb{R}^d}(\beta_{ji} - \beta_{ij}) \frac{q_j}{\langle q\rangle} \nabla^i_q g \psi_{\infty} dq &+ 2\frac{\int_{\mathbb{R}^d} \eta \psi_{\infty} dq}{N^4} \int_{\mathbb{R}^d} |\nabla_qg|^2 \psi_{\infty} dq \\ \notag
&\lesssim \left(\|\langle q\rangle\nabla_qg\|_{\mathcal{L}^2} + \frac{\|\langle q\rangle\nabla_qg\|^2_{\mathcal{L}^2}}{N^2}\right)\frac{\int_{\mathbb{R}^d} \eta \psi_{\infty} dq}{N^2}~~ a.e.~~ x \in \Lambda.
\end{align*}
Combining the developmental equation satisfied by $\frac{\int_{\mathbb{R}^d} \eta \psi_{\infty} dq}{N^2}$, $\frac{\int_{\mathbb{R}^d} \eta \psi_{\infty} dq}{N^2}(t, X(t, x))$ satisfies the following inequality under the Diperna-Lions flow $X (t, x)$ determined by $u$ :
$$
0 \leq \frac{\int_{\mathbb{R}^d} \eta \psi_{\infty} dq}{N^2} (t, X(t, x)) \lesssim \frac{\int_{\mathbb{R}^d} \eta_0 \psi_{\infty} dq}{\|g_0\|^2_{\mathcal{L}^2}+\int_{\mathbb{R}^d} \eta_0 \psi_{\infty} dq+1} \cdot e^{ Ct + \int_0^t \|\langle q\rangle\nabla_qg\|_{\mathcal{L}^2}ds }~~ a.e.~~x\in\Lambda.
$$
According to $\int_{\mathbb{R}^d} \eta_0 \psi_{\infty} dq=0$ and the uniqueness of Diperna-Lions flow, the standard transport equation theory ensures $\frac{\int_{\mathbb{R}^d} \eta \psi_{\infty} dq}{N^2}(t,x)=0$, or equivalently $\eta(t,x) = 0$, $(t,x)\in[0,T]\times\Lambda$ by modifying the values of the sequence involved on a null set.

Regarding the global weak solutions obtained under different integrability mentioned above, their long-time behavior is also studied. Overall, we have proven the exponential decay rate for $\|g\|_{L^2(\mathcal{L}^2)}$ and the optimal decay rate for $\|u\|_{L^2}$. Firstly, we obtain initial logarithmic decay rate for $u$ in $L^2$ by additional energy estimates with micro weight and the Fourier splitting method. Then, by virtue of the logarithmic decay rate and the time weighted energy estimate, we improve the decay rate to $(1+t)^{-\frac{1}{2}}$. Finally, for certain initial data, the lower bound on the decay rate in $L^2$ for the corresponding velocity $u$ is established, which implies that the decay rate we obtained is optimal.
\subsection{Organization}
The rest of this paper is organized as follows. In Section 2, we introduce some notations and give some preliminaries which will be used in the sequel. In Section 3, we derive some priori estimates for system \eqref{eq0}. In Section 4, we present the compactness on velocity $u$ and probability distribution $g$. In Section 5, we present optimal decay rate in $L^2$ for global weak solutions of system \eqref{eq0} by virtue of the Fourier spiltting method.


\section{Preliminaries}
 In this section, we will introduce some notations and useful lemmas which will be used in the sequel. We are only concerned with the case $\Lambda=\mathbb{R}^d$, since the periodic case is more easier. For $p\geq1$, we denote by $\mathcal{L}^{p}$ the space
$$\mathcal{L}^{p}\coloneqq\big\{f \big|\|f\|^{p}_{\mathcal{L}^{p}}=\int_{\mathbb{R}^d} \psi_{\infty}|f|^{p}dq<\infty\big\}.~~~~~~~~~~~~~~~~~~~~~$$
We will use the notation $L^{p}(\mathcal{L}^{q})$ to denote $L^{p}[\mathbb{R}^{d};\mathcal{L}^{q}]:$
$$L^{p}(\mathcal{L}^{q})\coloneqq\big\{f \big|\|f\|_{L^{p}(\mathcal{L}^{q})}=\left(\int_{\mathbb{R}^{d}}\left(\int_{\mathbb{R}^d} \psi_{\infty}|f|^{q}dq\right)^{\frac{p}{q}}dx\right)^{\frac{1}{p}}<\infty\big\}.$$
We now introduce the Littlewood-Paley decomposition theory and and Triebel-Lizorkin spaces.
\begin{prop}\cite{Bahouri2011,Compensated,Modern}\label{prop0'}
	Let $\mathcal{C}$ be the annulus $\{\xi\in\mathbb{R}^d:\frac 3 4\leq|\xi|\leq\frac 8 3\}$. There exist radial functions $\chi$ and $\varphi$, valued in the interval $[0,1]$, belonging respectively to $\mathscr{D}(B(0,\frac 4 3))$ and $\mathscr{D}(\mathcal{C})$, and such that
	$$ \forall\xi\in\mathbb{R}^d,\ \chi(\xi)+\sum_{j\geq 0}\varphi(2^{-j}\xi)=1, $$
	$$ \forall\xi\in\mathbb{R}^d\backslash\{0\},\ \sum_{j\in\mathbb{Z}}\varphi(2^{-j}\xi)=1,~~~ $$
	$$ |j-j'|\geq 2\Rightarrow\mathrm{Supp}\ \varphi(2^{-j}\cdot)\cap \mathrm{Supp}\ \varphi(2^{-j'}\cdot)=\emptyset, $$
	$$ ~~j\geq 1\Rightarrow\mathrm{Supp}\ \chi(\cdot)\cap \mathrm{Supp}\ \varphi(2^{-j}\cdot)=\emptyset. $$
	The set $\widetilde{\mathcal{C}}=B(0,\frac 2 3)+\mathcal{C}$ is an annulus, then
	$$ |j-j'|\geq 5\Rightarrow 2^{j}\mathcal{C}\cap 2^{j'}\widetilde{\mathcal{C}}=\emptyset. $$
	Further, we have
	$$ ~~\forall\xi\in\mathbb{R}^d,\ \frac 1 2\leq\chi^2(\xi)+\sum_{j\geq 0}\varphi^2(2^{-j}\xi)\leq 1, $$
	$$ \forall\xi\in\mathbb{R}^d\backslash\{0\},\ \frac 1 2\leq\sum_{j\in\mathbb{Z}}\varphi^2(2^{-j}\xi)\leq 1.~~ $$
$\mathscr{F}$ represents the Fourier transform and  its inverse is denoted by $\mathscr{F}^{-1}$.
Let $u$ be a tempered distribution in $\mathcal{S}'(\mathbb{R}^d)$. For all $j\in\mathbb{Z}$, define
$$
\dot{\Delta}_j u=0\,\ \text{if}\,\ j\leq -2,\quad
\Delta_{-1} u=\mathscr{F}^{-1}(\chi\mathscr{F}u),\quad
\dot{\Delta}_j u=\mathscr{F}^{-1}(\varphi(2^{-j}\cdot)\mathscr{F}u)\,\ \text{if}\,\ j\geq 0,\quad
\dot{S}_j u=\sum_{j'<j}\dot{\Delta}_{j'}u.
$$
Then the $\rm Littlewood$-$\rm Paley$ decomposition is given as follows:
$$ u=\sum_{j\in\mathbb{Z}}\dot{\Delta}_j u \quad \text{in}\ \mathcal{S}'(\mathbb{R}^d). $$
Let $s\in\mathbb{R},\ 1\leq p,r\leq\infty.$ The nonhomogeneous $\rm Triebel$-$\rm Lizorkin$ Space $F^s_{p,r}$ is defined by
$$ ~\dot{F}^s_{p,r}\coloneqq\{u\in S':\|u\|_{F^s_{p,r}}=\Big\|\|(2^{js}\dot{\Delta}_j u)_j \|_{l^r(\mathbb{Z})}\Big\|_{L^p}<\infty\}.~~~~~ $$
In particular,
$$ \mathcal{H}^1\coloneqq \dot{F}^0_{1,2}=\{u\in S':\|u\|_{\mathcal{H}^1}=\Big\|\|(\dot{\Delta}_j u)_j \|_{l^2(\mathbb{Z})}\Big\|_{L^1}<\infty\} $$
and
$$ ~~{\rm BMO}\coloneqq \dot{F}^0_{\infty,2}=\{u\in S':\|u\|_{{\rm BMO}}=\Big\|\|(\dot{\Delta}_j u)_j \|_{l^2(\mathbb{Z})}\Big\|_{L^{\infty}}<\infty\},$$
with duality ${\mathcal{H}^1}^{*}={\rm BMO}$. The following embedding hold
$$
\mathcal{H}^1 \hookrightarrow L^1 \ \ and \ \ \ L^{\infty} \hookrightarrow {\rm BMO}.
$$
If $\{u_i\}^2_{i=1}$ satisfies ${\rm div}~u_1=0$ and $\nabla\times u_2 = 0$, then there exists a positive constant $C$ such that
$$
\|u_1\cdot u_2\|_{\mathcal{H}^1} \leq C\|u_1\|_{L^2}\|u_2\|_{L^2}.~
$$
Moreover, denote $R_k$ be $\rm Riesz$ operator such that $\widehat{R}_k = \frac{\xi_k}{|\xi|}$. Then for $n \geq 2$, there exists a positive constant $C_n$ such that for $f\in \mathcal{H}^1(\mathbb{R}^n)$, we have
\begin{align}
 C_n^{-1}\|R_kf\|_{\mathcal{H}^1} \leq   \|f\|_{\mathcal{H}^1} \leq C_n (\|f\|_{L^1} + \displaystyle\sum_{m=1}^n \|R_mf\|_{L^1}).~
\end{align}
Note that the homogeneous $\rm Triebel$-$\rm Lizorkin$ Space is defined by $\dot{F}^s_{p,r}$ and $\dot{H}^s=\dot{F}^s_{2,2}$.
\end{prop}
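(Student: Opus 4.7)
The statement is a compendium of classical harmonic-analysis facts, so my plan is to carry out each piece by citing or mirroring the standard construction from \cite{Bahouri2011,Compensated,Modern} rather than reinventing them.

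First, for the dyadic partition of unity, I would start from a nonincreasing radial cutoff $\chi\in\mathscr{D}(B(0,4/3))$ with $\chi\equiv 1$ on $B(0,3/4)$, set $\varphi(\xi)=\chi(\xi/2)-\chi(\xi)$, and verify directly that $\mathrm{Supp}\,\varphi\subset\mathcal{C}$ and $\chi(\xi)+\sum_{j\geq0}\varphi(2^{-j}\xi)=1$ by telescoping. The separation-of-supports statements follow by elementary scaling of the annuli $\{3/4\leq|\xi|\leq 8/3\}$ and $\widetilde{\mathcal{C}}=B(0,2/3)+\mathcal{C}$; the pointwise bounds $\tfrac12\leq \chi^2+\sum\varphi^2(2^{-j}\cdot)\leq 1$ come from noting that at most two consecutive terms overlap, together with $a^2+b^2\geq\tfrac12(a+b)^2$. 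The Littlewood--Paley decomposition $u=\sum_j \dot\Delta_j u$ in $\mathcal{S}'$ then follows from Fourier inversion.

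Second, the Triebel--Lizorkin definitions $\dot{F}^s_{p,r}$, $\mathcal{H}^1=\dot F^0_{1,2}$ and $\mathrm{BMO}=\dot F^0_{\infty,2}$ are declared once the partition of unity is in place; the duality $(\mathcal{H}^1)^*=\mathrm{BMO}$ is Fefferman's theorem, and the embeddings $\mathcal{H}^1\hookrightarrow L^1$ and $L^\infty\hookrightarrow\mathrm{BMO}$ are immediate (the first from the atomic decomposition, the second because each $\dot\Delta_j$ is $L^\infty$-bounded and only finitely many indices contribute for a bounded function via the square-function interpretation).

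Third, for the div-curl bound $\|u_1\cdot u_2\|_{\mathcal{H}^1}\lesssim \|u_1\|_{L^2}\|u_2\|_{L^2}$ under $\mathrm{div}\,u_1=0$, $\nabla\times u_2=0$, I would invoke the Coifman--Lions--Meyer--Semmes compensated-compactness theorem: write $u_2=\nabla\phi$ locally, observe $\mathrm{div}(u_1\phi)=u_1\cdot u_2$, and verify that the grand maximal function of $u_1\cdot u_2$ is controlled by the product of the Hardy--Littlewood maximal functions of $u_1$ and $u_2$, which lie in $L^2$ by Hardy--Littlewood; Cauchy--Schwarz then gives an $L^1$ bound on the maximal function, i.e. membership in $\mathcal{H}^1$ with the desired norm estimate.

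Finally, the Riesz-transform characterization $C_n^{-1}\|R_kf\|_{\mathcal{H}^1}\leq\|f\|_{\mathcal{H}^1}\leq C_n(\|f\|_{L^1}+\sum_m\|R_m f\|_{L^1})$ is Stein's classical characterization: one direction uses boundedness of singular integrals on $\mathcal{H}^1$, the other uses that a tempered distribution $f$ whose Riesz transforms are integrable has a harmonic extension whose conjugates lie in $h^1$ of the upper half-space. I expect no genuine obstacle here, as every ingredient is documented in \cite{Bahouri2011,Compensated,Modern}; the only delicate point to state carefully is the normalization of the dyadic blocks so that the embeddings and the compensated-compactness constant are dimension-free in the way used later in the paper.
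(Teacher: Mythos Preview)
Your sketch is correct, but note that the paper does not actually prove this proposition: it is stated as a compendium of known facts from \cite{Bahouri2011,Compensated,Modern} and is followed immediately by the next cited proposition with no intervening argument. Your outline simply reproduces the standard proofs found in those references, which is entirely appropriate here; the only remark is that the paper does not require dimension-free constants as you suggest at the end, since it works only in $d=2,3$.
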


We now introduce some notations about $\rm Lorentz$ spaces.
\begin{prop}\cite{Bahouri2011}\label{prop0}
	Let $f$ be a measurable function on a measure space $(X,\mu)$ and $0 < p,q\leq\infty$. The distribution function of $f$ is defined  as $d_f(\alpha) = \mu\left(\{x\in X:|f(x)|>\alpha\}\right).$
	The decreasing
	rearrangement of $f$ is defined as $f^{*}=\inf\{s>0:d_f(s)\leq t\}.$
	Set
	\begin{align}
	\|f\|_{L^{p,q}} \coloneqq \left\{
	\begin{array}{ll}
		\big(\int_0^{\infty}\left(t^{\frac{1}{p}}f^{*}(t)\right)^q\frac{dt}{t}\big)^{\frac{1}{q}} \ \  {\rm if} \ \ q<\infty,\\
		\mathop{\sup}\limits_{t>0} t^{\frac{1}{p}}f^{*}(t) \  \ \ \ {\rm if} \ \ q=\infty,
	\end{array}
	\right.
	\end{align}
    and $\rm Lorentz$ space $L_{p,q}(X,\mu)=\{f: \|f\|_{L^{p,q}}<\infty\}$. Note that $L^{p,p}=L^{p}$. Suppose $0 < q < r \leq \infty$, then there exists a constant $C_{p,q,r}$ such that
    \begin{align}
    ~\|f\|_{L^{p,r}} \leq C_{p,q,r}\|f\|_{L^{p,q}}.
    \end{align}
\end{prop}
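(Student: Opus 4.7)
The proposition states definitions of the distribution function $d_f$, the decreasing rearrangement $f^*$, and the Lorentz quasi-norm, for which there is nothing to prove; the identity $L^{p,p}=L^p$ follows at once from the layer-cake representation together with the change of variable $t=d_f(\alpha)$. The only nontrivial content is the nesting inequality $\|f\|_{L^{p,r}} \leq C_{p,q,r}\|f\|_{L^{p,q}}$ for $0<q<r\leq\infty$, and my plan is to establish this inequality in two steps.

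First I would handle the weak-type endpoint $r=\infty$ using only the monotonicity of $f^*$. For any fixed $s>0$, since $f^*$ is nonincreasing on $[0,s]$,
\[ \|f\|_{L^{p,q}}^q \geq \int_0^s (t^{1/p}f^*(t))^q\,\frac{dt}{t} \geq f^*(s)^q\int_0^s t^{q/p-1}\,dt = \tfrac{p}{q}\,s^{q/p}f^*(s)^q, \]
so $s^{1/p}f^*(s)\leq (q/p)^{1/q}\|f\|_{L^{p,q}}$. Taking the supremum over $s>0$ yields the weak bound $\|f\|_{L^{p,\infty}}\leq (q/p)^{1/q}\|f\|_{L^{p,q}}$, which is already the desired inequality when $r=\infty$.

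Next, for the intermediate case $q<r<\infty$, I would peel one copy of $(t^{1/p}f^*(t))^{r-q}$ out of the integrand and use the weak bound from the previous step:
\[ \|f\|_{L^{p,r}}^r = \int_0^\infty (t^{1/p}f^*(t))^{r-q}\,(t^{1/p}f^*(t))^q\,\frac{dt}{t} \leq \|f\|_{L^{p,\infty}}^{r-q}\,\|f\|_{L^{p,q}}^q \leq (q/p)^{(r-q)/q}\,\|f\|_{L^{p,q}}^r, \]
which gives the embedding with the explicit constant $C_{p,q,r}=(q/p)^{(r-q)/(qr)}$.

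No serious obstacle is anticipated; the whole argument rests on the monotonicity of the decreasing rearrangement, which is used first to bootstrap the $L^{p,q}$ quasi-norm to a pointwise control of $t^{1/p}f^*(t)$, and then to interpolate $L^{p,r}$ between $L^{p,\infty}$ and $L^{p,q}$ in a H\"older-type fashion. The result is classical and can equivalently be quoted directly from \cite{Bahouri2011}.
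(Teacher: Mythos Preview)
Your proof is correct and is the standard argument for the Lorentz space nesting inequality. The paper itself does not prove this proposition at all; it is simply quoted from \cite{Bahouri2011} as a known preliminary, so there is no ``paper's own proof'' to compare against.
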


The interpolation lemma is as follows.
\begin{lemm}\cite{1959On}\label{Lemma3}
	Denote that $\Lambda^{s}f=\mathscr{F}^{-1}\left(|\xi|^s\mathscr{F}f\right)$. Let $d\geq2,~p\in[2,+\infty)$ and $0\leq s,s_1\leq s_2$, then there exists a constant $C$ such that
	$$~~~~~~~~~~\|\Lambda^{s}f\|_{L^{p}}\leq C \|\Lambda^{s_1}f\|^{1-\theta}_{L^{2}}\|\Lambda^{s_2} f\|^{\theta}_{L^{2}},$$
	where $0\leq\theta\leq1$ and $\theta$ satisfy
	$$ s+d(\frac 1 2 -\frac 1 p)=s_1 (1-\theta)+\theta s_2.~~~$$
	Note that we require that $0<\theta<1$ and $0\leq s_1\leq s$ when $p=\infty$.
\end{lemm}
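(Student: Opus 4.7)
The plan is to reduce the statement to an $L^2$-based Fourier interpolation via a Sobolev embedding step, then apply Hölder's inequality on the Fourier side.

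First, I would apply the homogeneous Sobolev embedding $\dot{H}^{d(\frac{1}{2}-\frac{1}{p})}(\mathbb{R}^d)\hookrightarrow L^p(\mathbb{R}^d)$, valid for $p\in[2,\infty)$, which follows from the Hardy--Littlewood--Sobolev inequality applied to the Riesz potential $\Lambda^{-d(\frac{1}{2}-\frac{1}{p})}$. Applied to $\Lambda^s f$, this yields
$$
\|\Lambda^s f\|_{L^p}\leq C\,\|\Lambda^{s+d(\frac{1}{2}-\frac{1}{p})}f\|_{L^2}.
$$
Denote $\sigma:=s+d(\frac{1}{2}-\frac{1}{p})$; the hypothesis on $\theta$ in the statement reads exactly $\sigma=(1-\theta)s_1+\theta s_2$.

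Second, by Plancherel's theorem,
$$
\|\Lambda^\sigma f\|_{L^2}^2=\int_{\mathbb{R}^d}|\xi|^{2\sigma}|\widehat{f}(\xi)|^2\,d\xi.
$$
Writing $|\xi|^{2\sigma}|\widehat{f}|^2=\bigl(|\xi|^{2s_1}|\widehat{f}|^2\bigr)^{1-\theta}\bigl(|\xi|^{2s_2}|\widehat{f}|^2\bigr)^{\theta}$ and applying Hölder's inequality with conjugate exponents $\frac{1}{1-\theta}$ and $\frac{1}{\theta}$ gives
$$
\|\Lambda^\sigma f\|_{L^2}^2\leq \|\Lambda^{s_1}f\|_{L^2}^{2(1-\theta)}\,\|\Lambda^{s_2}f\|_{L^2}^{2\theta},
$$
which combined with the embedding step completes the proof when $p\in[2,\infty)$. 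The constraints $s_1\leq s_2$ and $0\leq\theta\leq 1$ make the Hölder step legal; the requirement $s\geq 0$ together with $p\geq 2$ is precisely what keeps $\sigma\geq 0$ so that the embedding applies to $\Lambda^s f$ rather than an ill-defined object.

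For the endpoint $p=\infty$ noted at the end of the statement, the embedding $\dot{H}^{d/2}\hookrightarrow L^\infty$ fails, so I would instead use the Littlewood--Paley decomposition $f=\sum_{j\in\mathbb{Z}}\dot{\Delta}_j f$ together with Bernstein's inequality $\|\dot{\Delta}_j\Lambda^s f\|_{L^\infty}\lesssim 2^{j(s+d/2)}\|\dot{\Delta}_j f\|_{L^2}$, split the dyadic sum at a scale $2^{j_0}$ chosen to balance $\|\Lambda^{s_1}f\|_{L^2}$ against $\|\Lambda^{s_2}f\|_{L^2}$, and exploit $0<\theta<1$ together with $s_1\leq s$ to force the geometric series to converge on both sides of $j_0$. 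This endpoint case is the main obstacle: without the strict constraints on $\theta$ and the ordering $s_1\leq s$, the dyadic sum produces a logarithmic divergence that at best delivers a BMO estimate rather than an $L^\infty$ bound.
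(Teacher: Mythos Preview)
The paper does not supply its own proof of this lemma; it is stated with a citation to Nirenberg's 1959 paper and invoked as a standard interpolation tool in Sections~3 and~5. Your argument---homogeneous Sobolev embedding $\dot H^{d(\frac12-\frac1p)}\hookrightarrow L^p$ to reduce to $L^2$, followed by H\"older's inequality on the Fourier side---is the textbook proof and is correct for $p\in[2,\infty)$. Your endpoint $p=\infty$ treatment via Littlewood--Paley decomposition and Bernstein's inequality is likewise standard, and you correctly identify why the strict constraint $0<\theta<1$ is needed to make the two geometric series converge. There is nothing in the paper itself to compare against.
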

The following lemma allows us to estimate distribution $g$.
\begin{lemm}\cite{2017Global}\label{Lemma1}
Let $g\in L^2(\mathcal{L}^{2})$ with $\int_{\mathbb{R}^{d}} g\psi_\infty dq=0$. There exists a positive constant $C$ such that
\begin{align}
\|\langle q\rangle g\|_{L^2(\mathcal{L}^{2})}\leq C\|\nabla_qg\|_{L^2(\mathcal{L}^{2})},~~~~\||q|^2g\|_{{L^2(\mathcal{L}^{2})}}\leq C\|\langle q\rangle\nabla_qg\|_{L^2(\mathcal{L}^{2})},~~~~~~~~~
\end{align}
\end{lemm}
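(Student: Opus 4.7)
The plan is to prove both inequalities by exploiting the basic identity $\nabla_q \psi_\infty = -q\psi_\infty$ together with the Gaussian Poincaré inequality, which is where the mean-zero hypothesis $\int g\psi_\infty\,dq=0$ gets used.

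First I would handle the zeroth-order control. Expanding $g$ in Hermite polynomials (or equivalently invoking the classical spectral gap of the Ornstein--Uhlenbeck operator $-\psi_\infty^{-1}\mathrm{div}_q(\psi_\infty\nabla_q\cdot)$, whose kernel on $\mathcal{L}^2$ consists precisely of the constants), the vanishing of the Gaussian mean of $g$ rules out the zero eigenmode and yields the Gaussian Poincaré inequality
\begin{equation*}
\|g\|_{\mathcal{L}^2}\;\leq\;C\,\|\nabla_q g\|_{\mathcal{L}^2}.
\end{equation*}
This is really the only non-elementary ingredient; everything else is an integration by parts.

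Next I would derive the weighted bound by integrating by parts against the Gaussian. Since $q\psi_\infty=-\nabla_q\psi_\infty$, for the first inequality I write
\begin{equation*}
\int_{\mathbb{R}^d}|q|^2 g^2\psi_\infty\,dq \;=\; -\int_{\mathbb{R}^d} g^2\, q\cdot\nabla_q\psi_\infty\,dq \;=\; \int_{\mathbb{R}^d}\mathrm{div}_q(q g^2)\,\psi_\infty\,dq \;=\; d\|g\|_{\mathcal{L}^2}^2 + 2\int_{\mathbb{R}^d} g\,(q\cdot\nabla_q g)\,\psi_\infty\,dq.
\end{equation*}
The last term is dominated by $2\||q|g\|_{\mathcal{L}^2}\|\nabla_q g\|_{\mathcal{L}^2}$, so Young's inequality absorbs half of $\||q|g\|_{\mathcal{L}^2}^2$ into the left-hand side and leaves a bound by $\|g\|_{\mathcal{L}^2}^2+\|\nabla_q g\|_{\mathcal{L}^2}^2$. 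Invoking the Gaussian Poincaré step and adding back $\|g\|_{\mathcal{L}^2}^2$ on the left gives $\|\langle q\rangle g\|_{\mathcal{L}^2}\leq C\|\nabla_q g\|_{\mathcal{L}^2}$, which is the first claim.

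For the second inequality the strategy is the same, one power of $|q|^2$ heavier. Using $\mathrm{div}_q(|q|^2 q)=(d+2)|q|^2$, the same integration by parts gives
\begin{equation*}
\int_{\mathbb{R}^d}|q|^4 g^2 \psi_\infty\,dq \;=\; (d+2)\int_{\mathbb{R}^d}|q|^2 g^2\psi_\infty\,dq + 2\int_{\mathbb{R}^d} |q|^2\, g\,(q\cdot\nabla_q g)\,\psi_\infty\,dq,
\end{equation*}
and the cross term is controlled by $2\||q|^2 g\|_{\mathcal{L}^2}\||q|\nabla_q g\|_{\mathcal{L}^2}$. Young's inequality absorbs $\tfrac12\||q|^2 g\|_{\mathcal{L}^2}^2$, and combining with the first inequality to bound $\||q|g\|_{\mathcal{L}^2}^2$ by $\|\nabla_q g\|_{\mathcal{L}^2}^2$ produces $\||q|^2 g\|_{\mathcal{L}^2}\leq C\|\langle q\rangle\nabla_q g\|_{\mathcal{L}^2}$. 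The only subtlety is that the formal integrations by parts must be justified by density (approximate $g$ by compactly supported smooth functions, noting that all boundary terms at infinity vanish because of the Gaussian weight), but this is routine. The main obstacle, if there is one, is really just the Gaussian Poincaré step; once that is in hand the rest is two short integration-by-parts arguments.
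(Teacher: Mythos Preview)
Your argument is correct. The paper does not actually prove this lemma; it is stated with a citation to \cite{2017Global} and no proof is given in the text, so there is nothing to compare against here. Your approach---Gaussian Poincar\'e for the zeroth-order control, then the integration-by-parts identity coming from $q\psi_\infty=-\nabla_q\psi_\infty$, followed by Cauchy--Schwarz and Young to absorb the weighted term---is the standard route and all steps check out, including the computation $\mathrm{div}_q(|q|^2 q)=(d+2)|q|^2$ and the density/approximation remark for justifying the boundary terms.
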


The following lemma is useful for showing optimal decay rate.
\begin{lemm}\cite{decayestimate}\label{Ldecay}
	Let $r_1 > 1,r_2\in[0,r_1]$. Then there exists a positive constant $C$ such that
	\begin{align*}
		\left\{\begin{array}{l}
			\int_0^t(1+s)^{-r_2}e^{-(1+t-s)} ds \leq C(r_2)(1+t)^{-r_2},\\
			\int_0^t (1+t-s)^{-r_1}(1+s)^{-r_2}ds \leq C(r_1,r_2)(1+t)^{-r_2}.
		\end{array}\right.
	\end{align*}
\end{lemm}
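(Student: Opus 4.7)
The plan is to prove both inequalities by the standard splitting trick: cut the interval $[0,t]$ at $s = t/2$ and bound each half separately, using that one of the two factors is essentially constant on each piece.

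For the first inequality, I would write $\int_0^t = \int_0^{t/2} + \int_{t/2}^t$. On $[0, t/2]$ the factor $1+t-s \geq 1 + t/2 \geq (1+t)/2$, so $e^{-(1+t-s)} \leq e^{-(1+t)/2}$, which beats any algebraic rate and gives (after estimating $\int_0^{t/2}(1+s)^{-r_2}\, ds$ by a polynomial or a logarithm in $1+t$) a contribution that is $O\!\bigl((1+t)^{-r_2}\bigr)$ thanks to exponential dominance. On $[t/2, t]$ I use $1 + s \geq (1+t)/2$ so that $(1+s)^{-r_2} \leq 2^{r_2}(1+t)^{-r_2}$ comes out of the integral, and what remains is $\int_{t/2}^t e^{-(1+t-s)}\, ds \leq \int_0^\infty e^{-(1+u)}\, du < \infty$.

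For the second inequality, again splitting at $t/2$, the easy half is $[t/2, t]$: there $(1+s)^{-r_2} \leq 2^{r_2}(1+t)^{-r_2}$, and $\int_{t/2}^{t}(1+t-s)^{-r_1}\, ds = \int_0^{t/2}(1+u)^{-r_1}\, du$ is uniformly bounded because $r_1 > 1$. On $[0, t/2]$ the factor $(1+t-s)^{-r_1} \leq C(1+t)^{-r_1}$ is pulled out, and one has to estimate $\int_0^{t/2}(1+s)^{-r_2}\, ds$. This produces $C(1+t)^{1-r_2}$ if $r_2 < 1$, $C\log(1+t)$ if $r_2 = 1$, and a uniform constant if $r_2 > 1$. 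Combining with the prefactor $(1+t)^{-r_1}$ yields $(1+t)^{1-r_1-r_2}$, $(1+t)^{-r_1}\log(1+t)$, or $(1+t)^{-r_1}$ respectively; in all three cases the constraint $r_2 \leq r_1$ with $r_1 > 1$ (giving a strict margin to absorb the log when $r_2 = 1$) ensures the bound is at most $C(1+t)^{-r_2}$.

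There is no real obstacle here; the only care needed is in the borderline case $r_2 = 1$, where the logarithm has to be swallowed by the strict inequality $r_1 > 1$, and in keeping track that the hypothesis $r_2 \in [0, r_1]$ is exactly what is required when $r_2 < 1$ so that $1 - r_1 - r_2 \leq -r_2$. The constants $C(r_2)$ and $C(r_1, r_2)$ are tracked as products of the explicit factors $2^{r_2}$, $\int_0^\infty(1+u)^{-r_1}\, du$, $\frac{1}{|1-r_2|}$ (or its analogue in the logarithmic case), and $\int_0^\infty e^{-(1+u)}\, du$ that emerge in the two splittings.
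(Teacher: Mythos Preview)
Your proof is correct and follows the standard approach for such convolution-type decay estimates. Note, however, that the paper does not supply its own proof of this lemma: it is stated with a citation to an external reference and used as a black box in Section~5. There is therefore nothing in the paper to compare your argument against. Your splitting at $s=t/2$, together with the case analysis on $r_2$ relative to $1$ and the observation that $r_1>1$ absorbs the logarithm in the borderline case, is exactly the expected elementary argument and is complete as written.
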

We give a commutator lemma, which is useful in renormalization process as that of \cite{lions1996}.
\begin{lemm}\label{renormalized1}
	Let $\Omega \subset \mathbb{R}^d$ be an arbitrary domain. Assume
	\begin{align}
		~f \in \mathcal{L}^2 \ \ \ and \ \ \ \psi^{\pm1}_{\infty} \in W^{1,2}(\Omega;\mathbb{R}^d)
	\end{align}
	be given functions with $\psi^{\pm1}_{\infty}$ bounded in any $K \subset\subset \Omega$. Then
	\begin{align}\label{bound}
		\|[\nabla_q\left(f\psi^{\pm1}_{\infty}\right)]^{\varepsilon}_q - \nabla_q\left([f]^{\varepsilon}_q\psi^{\pm1}_{\infty}\right)\|_{L^1(K)} \leq C(K)\|f\|_{\mathcal{L}^2}\|\psi^{\pm1}_{\infty}\|_{W^{1,2}(\Omega;\mathbb{R}^d)} ,
	\end{align}
	and
	\begin{align}\label{limit}
		[\nabla_q\left(f\psi^{\pm1}_{\infty}\right)]^{\varepsilon}_q - \nabla_q\left([f]^{\varepsilon}_q\psi^{\pm1}_{\infty}\right) \rightarrow 0 \ in \ L^1(K) \ \ as \ \ \varepsilon\rightarrow 0.~~~~~~~~
	\end{align}
Here $v \mapsto [v]^{\varepsilon}_q = \theta^{\varepsilon} \ast_q v$  is smoothing operator with $$~\theta^{\varepsilon}(x) = \varepsilon^{-d}\theta(\frac{x}{\varepsilon}) \ \ and \ \ \  \theta=\frac{e^{\frac{1}{|x|^2-1}}}{\int_{\mathbb{R}^d}e^{\frac{1}{|x|^2-1}}dx}~1_{\{|x|\leq 1\}}.~~~
$$
\end{lemm}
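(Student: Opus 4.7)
The plan is to follow the classical DiPerna--Lions commutator argument adapted to the weighted setting $\mathcal{L}^2$. Since mollification in $q$ commutes with $\nabla_q$, I would first rewrite the commutator explicitly as
\begin{align*}
C^\varepsilon(q) := [\nabla_q(f\psi_\infty^{\pm 1})]^\varepsilon_q - \nabla_q([f]^\varepsilon_q \psi_\infty^{\pm 1}) = \int_{\mathbb{R}^d} \nabla_q \theta^\varepsilon(q-q')\bigl[\psi_\infty^{\pm 1}(q') - \psi_\infty^{\pm 1}(q)\bigr] f(q')\, dq' - [f]^\varepsilon_q(q)\, \nabla_q \psi_\infty^{\pm 1}(q),
\end{align*}
which isolates the genuine commutator contribution from a standard mollification remainder.

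For the uniform bound \eqref{bound}, I would rescale $q' = q + \varepsilon y$ and apply the fundamental theorem of calculus to write $\psi_\infty^{\pm 1}(q+\varepsilon y) - \psi_\infty^{\pm 1}(q) = \varepsilon \int_0^1 \nabla_q \psi_\infty^{\pm 1}(q+s\varepsilon y)\cdot y\, ds$. The extra factor $\varepsilon$ cancels the $\varepsilon^{-1}$ arising from $\nabla \theta^\varepsilon$, leaving an integrand whose magnitude is independent of $\varepsilon$. Then Fubini combined with Hölder in $L^2$ over a small enlargement $K_\varepsilon \subset\subset \Omega$ of $K$ would bound the first piece by $C(K)\|f\|_{L^2(K_\varepsilon)}\|\nabla_q \psi_\infty^{\pm 1}\|_{L^2(\Omega)}$, and similarly for the remainder term. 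Since $\psi_\infty^{-1}$ is bounded on $K_\varepsilon$ by hypothesis, $\|f\|_{L^2(K_\varepsilon)} \leq C(K)\|f\|_{\mathcal{L}^2}$, which yields \eqref{bound}.

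For the convergence \eqref{limit}, my plan is a three-$\varepsilon$ argument. For $f \in C^1_c(\Omega)$, a direct Taylor expansion (or, equivalently, Lebesgue differentiation applied to the first integral above) would show that the integral converges pointwise and in $L^1_{\rm loc}$ to $f(q) \nabla_q \psi_\infty^{\pm 1}(q)$, which exactly cancels the mollification remainder $[f]^\varepsilon_q \nabla_q \psi_\infty^{\pm 1}$ in the limit, giving $C^\varepsilon \to 0$ in $L^1(K)$. For general $f \in \mathcal{L}^2$, I would approximate $f$ by $f_n \in C^1_c(\Omega)$ with $f_n \to f$ in $\mathcal{L}^2$ (possible since smooth compactly supported functions are dense in the weighted space when $\psi_\infty^{\pm 1}$ is locally bounded and $\psi_\infty \in L^1$), and apply \eqref{bound} to $f - f_n$ to absorb the error uniformly in $\varepsilon$.

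The main obstacle I anticipate is the pointwise/$L^1_{\rm loc}$ convergence of the first integral when $\psi_\infty^{\pm 1}$ has only $W^{1,2}$ rather than $C^1$ regularity, since the classical Taylor expansion of the weight then fails in the pointwise sense. In the concrete polymeric setting of this paper $\psi_\infty = e^{-|q|^2/2}$ is smooth, so the smooth case can be handled directly; to treat the fully general statement with $\psi_\infty^{\pm 1} \in W^{1,2}$ one would additionally approximate the weight by smooth functions via Meyers--Serrin on $K_\varepsilon$ and use \eqref{bound} once more to pass to the limit.
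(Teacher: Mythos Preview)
Your proposal is correct and follows essentially the same route as the paper: both isolate the commutator as the difference-quotient integral plus the remainder $-[f]^\varepsilon_q\,\nabla_q\psi_\infty^{\pm1}$, bound the integral via the $W^{1,2}$ difference quotient and H\"older/Minkowski, and then pass from smooth $f$ to general $f\in\mathcal L^2$ by density combined with the uniform bound. The only cosmetic difference is that the paper phrases the last step as an application of the Banach--Steinhaus theorem rather than an explicit three-$\varepsilon$ argument, and it leaves the difference-quotient estimate implicit where you spell out the fundamental theorem of calculus; these are equivalent. Your caveat about the weight having only $W^{1,2}$ regularity is well taken but, as you note, moot in the paper's concrete setting since $\psi_\infty=e^{-|q|^2/2}$ is smooth.
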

\begin{proof}
	To begin with, we observe that the following quantity
	\begin{align}
		[\nabla_q\left(f\psi^{\pm1}_{\infty}\right)]^{\varepsilon}_q - \nabla_q\left([f]^{\varepsilon}_q\psi^{\pm1}_{\infty}\right)~~~~~~~~~~~~~~~~~
	\end{align}
	is well defined on $K$ whenever $\varepsilon$ is sufficiently small. Moreover, \eqref{limit} holds for any $f\in C^{\infty}$, which is dense in $L^1(K)$. According to Banach-Steinhaus theorem, we ensure \eqref{limit} by showing the bound \eqref{bound}. To this end, we write
	\begin{align}
		[\nabla_q\left(f\psi^{\pm1}_{\infty}\right)]^{\varepsilon}_q &- \nabla_q\left([f]^{\varepsilon}_q\psi^{\pm1}_{\infty}\right) =-[f]^{\varepsilon}_q\nabla_q\psi^{\pm1}_{\infty}  \\ \notag
		& + \int_{\mathbb{R}^d}f(q-z)\frac{\psi^{\pm1}_{\infty}(q)-\psi^{\pm1}_{\infty}(q-z)}{|z|}\cdot\nabla_q\theta^{\varepsilon}(|z|)|z|dz.~~~~~~~~~~~
	\end{align}
	According to Minkowski's inequality, we infer that
	\begin{align}
		\int_{K}\int_{\mathbb{R}^d}f(q-z)\frac{\psi^{\pm1}_{\infty}(q)-\psi^{\pm1}_{\infty}(q-z)}{|z|}\cdot\nabla_q\theta^{\varepsilon}(|z|)|z|dzdx \leq C(K)\|f\|_{\mathcal{L}^2}\|\psi^{\pm1}_{\infty}\|_{W^{1,2}(\Omega;\mathbb{R}^d)}.
	\end{align}
	This completes the proof of Lemma \ref{renormalized1} .
\end{proof}

The following proposition is a straightforward consequence of Lemma \ref{renormalized1} .
\begin{prop}\label{renormalized2}
	Let $\Omega \subset \mathbb{R}^d$ with $d\geq 2$ be an arbitrary domain. Let
	\begin{align*}
	g \in C\left([0,T);L^2(\Omega,\mathcal{L}^2)\right) \ \ &and \  \ \nabla_qg \in  L^2\left(0,T;L^2(\Omega,\mathcal{L}^2)\right), \\
	u \in L^2\left(0,T;W^{1,2}(\Omega)\right) \ \ &and \ \ h \in L^1\left(0,T;L^1(\Omega,\mathcal{L}^2)\right),
   \end{align*}
	satisfy
	\begin{align}\label{renormalizedeq1}
	~~~~~\partial_t	g+ {\rm div}_x(g u) - \mathcal{L} g= h \ \ in \ \ \mathscr{D}^{'}\left((0,T)\times\Omega\times\Omega\right),
	\end{align}
	with $\mathcal{L} g = \frac{1}{\psi_{\infty}} \nabla_q\cdot\left(\nabla_qg\psi_{\infty}\right)$. Then we have
	\begin{align}\label{renormalizedeq2}
		\partial_t B(g) + {\rm div}_x\left(B(g)u\right) - \mathcal{L}B(g) + B^{''}(g)|\nabla_q g|^2= B'(g)h  \ \ in \ \ \ \mathscr{D}^{'}\left((0,T)\times\Omega\times\Omega\right),~~
	\end{align}
	where we take $B \in C^2[0,\infty)$ such that \eqref{renormalizedeq2} makes sense.
\end{prop}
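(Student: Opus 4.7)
The plan is to carry out a DiPerna--Lions style renormalization, using the $q$-mollifier $[\,\cdot\,]^{\varepsilon}_q$ provided by Lemma \ref{renormalized1}, together with a standard $x$-mollification, to legitimize the chain rule for both the transport and the Fokker--Planck parts of \eqref{renormalizedeq1}.

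First, I would apply $[\,\cdot\,]^{\varepsilon}_q$ to \eqref{renormalizedeq1}. Because $u=u(t,x)$ is independent of $q$ and the mollifier acts only in $q$, it commutes with $\partial_t$ and with multiplication by $u$ inside ${\rm div}_x$, so setting $g^{\varepsilon} := [g]^{\varepsilon}_q$ and $h^{\varepsilon} := [h]^{\varepsilon}_q$ one obtains, in $\mathscr{D}'$,
$$\partial_t g^{\varepsilon} + {\rm div}_x(g^{\varepsilon} u) - \mathcal{L}g^{\varepsilon} = h^{\varepsilon} + r^{\varepsilon},\qquad r^{\varepsilon} := [\mathcal{L}g]^{\varepsilon}_q - \mathcal{L}g^{\varepsilon}.$$
Since $g^{\varepsilon}$ is now $C^{\infty}$ in $q$, the classical chain rule in $q$ applies pointwise and yields $\mathcal{L}B(g^{\varepsilon}) = B'(g^{\varepsilon})\mathcal{L}g^{\varepsilon} + B''(g^{\varepsilon})|\nabla_q g^{\varepsilon}|^2$.

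Next, to handle the $(t,x)$ derivatives I would further mollify in $x$ with parameter $\delta$ and work with $g^{\varepsilon,\delta}$, which is smooth in $(x,q)$. The classical DiPerna--Lions commutator estimate, available since $u \in L^2(0,T;W^{1,2})$ (and ${\rm div}_x u = 0$ in the intended application), ensures that the $x$-commutator $u\cdot\nabla_x g^{\varepsilon,\delta} - [u\cdot\nabla_x g^{\varepsilon}]^{\delta}$ tends to $0$ in $L^1_{\mathrm{loc}}$ as $\delta\to 0$. Multiplying the mollified equation by $B'(g^{\varepsilon,\delta})$, invoking the classical chain rule in all variables, and then sending $\delta\to 0$ gives
$$\partial_t B(g^{\varepsilon}) + {\rm div}_x(B(g^{\varepsilon})u) - \mathcal{L}B(g^{\varepsilon}) + B''(g^{\varepsilon})|\nabla_q g^{\varepsilon}|^2 = B'(g^{\varepsilon})(h^{\varepsilon} + r^{\varepsilon})$$
in $\mathscr{D}'((0,T)\times\Omega\times\Omega)$.

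The main obstacle will be passing $\varepsilon \to 0$, which reduces to controlling $r^{\varepsilon}$. Writing $\mathcal{L}g = \psi_{\infty}^{-1}\nabla_q\cdot(\psi_{\infty}\nabla_q g)$ and expanding, $r^{\varepsilon}$ decomposes into terms of the form $[\nabla_q(f \psi_{\infty}^{\pm 1})]^{\varepsilon}_q - \nabla_q([f]^{\varepsilon}_q \psi_{\infty}^{\pm 1})$ with $f$ a component of $\nabla_q g \in L^2(\mathcal{L}^2)$, to which Lemma \ref{renormalized1} applies directly and gives $r^{\varepsilon} \to 0$ in $L^1_{\mathrm{loc}}$. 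Combining this with the standard convergences $g^{\varepsilon} \to g$ in $C([0,T];L^2(\mathcal{L}^2))$, $\nabla_q g^{\varepsilon} \to \nabla_q g$ in $L^2(0,T;L^2(\mathcal{L}^2))$, $h^{\varepsilon} \to h$ in $L^1(0,T;L^1(\mathcal{L}^2))$, and the continuity of $B'$, $B''$ on the range of $g$ (guaranteed by the hypothesis that $B$ is chosen so that \eqref{renormalizedeq2} makes sense), every term in the mollified equation converges distributionally to its counterpart in \eqref{renormalizedeq2}, completing the proof.
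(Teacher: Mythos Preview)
Your proposal is correct and follows essentially the same DiPerna--Lions renormalization strategy as the paper. The only cosmetic differences are that the paper mollifies simultaneously in $(x,q)$ rather than sequentially, gives a more explicit three-term decomposition of the $\mathcal{L}$-commutator (including a multiplicative commutator $[F\,q/\psi_{\infty}]^{\varepsilon}_q - [F]^{\varepsilon}_q\,q/\psi_{\infty}$ handled by the \emph{proof} of Lemma~\ref{renormalized1} rather than its statement), and first establishes \eqref{renormalizedeq2} for $B\in\mathscr{D}(0,\infty)$ before approximating general $B\in C^2[0,\infty)$ via Lebesgue's theorem.
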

\begin{proof}
	Firstly, we prove \eqref{renormalizedeq2} for any $B\in \mathscr{D}(0,\infty)$. Applying the regularizing operators $v \mapsto [v]_{x,q}^{\varepsilon}$ to both sides of \eqref{renormalizedeq1}, we obtain
	\begin{align}\label{g}
		\partial_t [g]_{x,q}^{\varepsilon} + \nabla_x [g]_{x,q}^{\varepsilon} \cdot u - \mathcal{L} [g]_{x,q}^{\varepsilon} = [h]_{x,q}^{\varepsilon} + s^{\varepsilon}+r^{\varepsilon} \ \ a.e. \ \ on \ \ O,~~~~~~~~~
	\end{align}
	for any bounded open set $O \subset \bar{O} \subset (0,T)\times\mathbb{R}^d\times\mathbb{R}^d$ provided $\varepsilon >0$ small enough with
	\begin{align}
		&s^{\varepsilon} = [\mathcal{L} g]_{x,q}^{\varepsilon}-\mathcal{L} [g]_{x,q}^{\varepsilon} \in L^1(O),	\\ \notag
		&r^{\varepsilon} ={\rm div}_x~\big\{[(gu)]^{\varepsilon}_{x,q} - ([g]^{\varepsilon}_{x,q}u)\big\} \in L^1(O).
	\end{align}
It follows from the proof of Lemma \ref{renormalized1} that 
\begin{align}
	[f\frac{q}{\psi_{\infty}}]^{\varepsilon}_q -[f]^{\varepsilon}_q\frac{q}{\psi_{\infty}} \rightarrow 0 \ \ in \ \ L^1(O) \ \  as  \ \ \ \varepsilon\rightarrow 0.~~~~~
\end{align}
Set $F=\nabla_qg\psi_{\infty}\in \mathcal{L}^2$. Notice that
\begin{align}
s^{\varepsilon} = [F\frac{q}{\psi_{\infty}}]^{\varepsilon}_q - [F]^{\varepsilon}_q\frac{q}{\psi_{\infty}} 
	+{\rm div}_q\left([F\psi^{-1}_{\infty}]^{\varepsilon}_q - [F]^{\varepsilon}_q\psi^{-1}_{\infty}\right)+\psi^{-1}_{\infty}{\rm div}_q\left([\nabla_qg\psi_{\infty}]^{\varepsilon}_q-[\nabla_qg]^{\varepsilon}_q\psi_{\infty}\right).
\end{align}
According to Lemma \ref{renormalized1}, we thus deduce that
	$$
	s^{\varepsilon} + r^{\varepsilon} \rightarrow 0  \ \ in \ \ L^1(O) \ \ as \ \ \varepsilon  \rightarrow 0.~~~
	$$
	Multipling $B^{'}([g]_{x,q}^{\varepsilon})$ to both sides of \eqref{g}, we obtain
	\begin{align}
		\partial_t B\left([g]_{x,q}^{\varepsilon}\right) + \nabla_x B\left([g]_{x,q}^{\varepsilon}\right) \cdot u &- \mathcal{L}B\left([g]_{x,q}^{\varepsilon}\right) + B^{''}\left([g]_{x,q}^{\varepsilon}\right)|\nabla_q [g]_{x,q}^{\varepsilon} |^2 ~~~~~~\\ \notag
		&= \left([h]_{x,q}^{\varepsilon} + s^{\varepsilon}+r^{\varepsilon}\right)B^{'}\left([g]_{x,q}^{\varepsilon}\right),
	\end{align}
	which yields \eqref{renormalizedeq2} for $B\in \mathscr{D}(0,\infty)$ by passing $\varepsilon \rightarrow 0$. For any given $B \in C^2[0,\infty)$ such that \eqref{renormalizedeq2} makes sense, take $B_n\in \mathscr{D}(0,\infty)$ which is uniformly bounded and converges to $B$ uniformly on compact sets in $[0,\infty)$. The proof of Propositon \ref{renormalized2} is completed by using Lebesgue's convergence theorem.
\end{proof}
\begin{lemm}\cite{Masmoudi2013}$(\rm Existence$ $\rm of$ $\rm Diperna$-$\rm Lions$ $\rm flow)$\label{Lions}
If $u\in L^2(0,T;H^1(\mathbb{R}^d))$ and ${\rm div}~u=0$. Then there exists a unique flow $X(t,t_0,x)$ such that for all $t_0\in(0,T)$ and $t\mapsto X(t,t_0,x)$ is absolutely continuous for $a.e.~x\in \mathbb{R}^d$ and satisfies
\begin{align}
\left\{
\begin{array}{ll}
\frac{\partial X}{\partial t}(t,t_0,x)=u(t,X(t,t_0,x)), \ t\in(0,T), \\
X(t_0,t_0,x)=x.
\end{array}
\right.
\end{align}
Moreover, for all $t,t_0\in(0,T)$, the map $x\mapsto X(t,t_0,x)$ is measure-preserving.
\end{lemm}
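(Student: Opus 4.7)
The plan is to construct the flow via mollification and pass to the limit using the DiPerna--Lions stability machinery, then derive uniqueness and measure-preservation from the renormalization property of the associated transport equation. I will focus on the existence statement; absolute continuity in $t$ comes for free once the flow is constructed as a limit of smooth flows, and the uniqueness of $t_0$ being arbitrary in $(0,T)$ is handled by running the argument from each initial time.

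First, I would regularize in space: set $u^{\varepsilon}(t,x)=(\rho_{\varepsilon}\ast_x u)(t,x)$ with $\rho_{\varepsilon}$ a standard mollifier. Then $u^{\varepsilon}$ is smooth in $x$, divergence-free, and bounded in $L^2(0,T;H^1(\mathbb{R}^d))$. The classical Cauchy--Lipschitz theorem produces a unique smooth flow $X^{\varepsilon}(t,t_0,x)$ solving $\partial_t X^{\varepsilon}=u^{\varepsilon}(t,X^{\varepsilon})$ with $X^{\varepsilon}(t_0,t_0,\cdot)=\mathrm{Id}$; since ${\rm div}\,u^{\varepsilon}=0$, each $X^{\varepsilon}(t,t_0,\cdot)$ is measure-preserving.

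Second, I would establish a Cauchy estimate through the log-type functional
\begin{equation*}
\Phi_{\varepsilon,\delta}(t)=\int_{\mathbb{R}^d}\log\!\left(1+\frac{|X^{\varepsilon}(t,t_0,x)-X^{\delta}(t,t_0,x)|^2}{\lambda^2}\right)\rho(x)\,dx,
\end{equation*}
with $\rho\in L^1\cap L^{\infty}$ a positive weight. Differentiating in $t$ and splitting
$|u^{\varepsilon}(X^{\varepsilon})-u^{\delta}(X^{\delta})|\leq |u^{\varepsilon}-u^{\delta}|(X^{\varepsilon})+|u^{\delta}(X^{\varepsilon})-u^{\delta}(X^{\delta})|$, the second piece is controlled by the pointwise inequality $|u(y)-u(z)|\leq C|y-z|(M|\nabla u|(y)+M|\nabla u|(z))$ valid for Sobolev maps, where $M$ is the Hardy--Littlewood maximal operator. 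Using $L^2$-boundedness of $M$ and the measure-preservation of $X^{\varepsilon}$, plus $u^{\varepsilon}\to u$ in $L^2_tL^2_x$, one obtains a Gronwall-type inequality yielding $\Phi_{\varepsilon,\delta}\to 0$ as $\varepsilon,\delta\to 0$. This forces $\{X^{\varepsilon}\}$ to be Cauchy in measure on compact time intervals, and the limit $X(t,t_0,x)$ inherits absolute continuity in $t$ and satisfies the ODE in the integrated sense.

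Third, for uniqueness and measure-preservation I would invoke the renormalization theory. Since $u\in L^2_tH^1_x$ with ${\rm div}\,u=0$, the transport equation $\partial_t f+u\cdot\nabla f=0$ admits unique weak solutions in $L^{\infty}_tL^p_x$ for all $p\in[1,\infty]$, and every such solution is renormalized (i.e., $\beta(f)$ solves the same equation for $\beta\in C^1_b$). Any two candidate flows $X_1,X_2$ would both push forward $\mathbf{1}_A$ to weak solutions of the transport equation with datum $\mathbf{1}_A$; uniqueness at the PDE level then forces $X_1(t,t_0,\cdot)_{\sharp}\mathcal{L}^d=X_2(t,t_0,\cdot)_{\sharp}\mathcal{L}^d$ and, after a standard argument using countably many test sets, $X_1=X_2$ a.e. The same pushforward identity applied to $X$ itself, combined with the conservation of the $L^1$-norm of $\mathbf{1}_A\circ X^{-1}$ under the divergence-free transport, yields $|X(t,t_0,A)|=|A|$.

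The principal obstacle is the stability estimate in the second step: the Sobolev regularity of $u$ is exactly at the threshold where classical Cauchy--Lipschitz fails, and the whole argument rests on the maximal-function inequality and the fact that divergence-free approximants preserve Lebesgue measure, so the $L^2$ bound on $M|\nabla u|$ survives composition with $X^{\varepsilon}$. Everything else (limits, uniqueness, measure-preservation) follows by now-standard arguments once this compactness step is in place.
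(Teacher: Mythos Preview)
The paper does not prove this lemma at all: it is stated with a citation to \cite{Masmoudi2013} and used as a black box, so there is no ``paper's own proof'' to compare against. Your sketch, by contrast, supplies a genuine argument.

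What you outline is essentially correct and combines two standard strands. Your second step is the Crippa--De Lellis quantitative stability estimate via the logarithmic functional and the maximal-function inequality for Sobolev maps; this works here because $u\in L^2_tH^1_x$ gives $\nabla u\in L^2_tL^2_x$ and $M$ is bounded on $L^2$. Your third step is the original DiPerna--Lions renormalization argument for uniqueness and measure-preservation. Both are legitimate and mesh well. One small point: in the uniqueness paragraph, equality of pushforwards $X_{1\sharp}\mathcal{L}^d=X_{2\sharp}\mathcal{L}^d$ alone does not yield $X_1=X_2$ a.e.; you need to transport functions of $x$ (not just indicator functions of the target) and use that $f(t,x)=\phi(X(t_0,t,x))$ solves the transport equation with datum $\phi$, then compare. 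This is the ``standard argument'' you allude to, but as written the sentence is slightly off.

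Since the paper simply imports the result, your contribution here is strictly more than what the paper provides, and the approach you chose (Crippa--De Lellis for compactness plus DiPerna--Lions for uniqueness) is the modern textbook route.
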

\begin{lemm}\cite{Masmoudi2013}$( \rm Mild$ $\rm formulation)$\label{Mild}
Assume that $u\in L^2\left(0,T;H^1_0(\Omega)\right)$ and that $X(t,x)$ is its $\rm Diperna$-$\rm Lions$ flow. Let $f\in L^{\infty}\left((0,T)\times\Omega\right),~f_0\in L^{\infty}(\Omega)$ and $h\in L^1\left((0,T)\times\Omega\right)$. The following three systems are equivalent :
\begin{align}
\left\{
\begin{array}{ll}
\partial_t f + u\cdot\nabla f \geq h $ in $ \mathscr{D}^{'}((0,T)\times\Omega), \\
f(t=0,x)\geq f_0(x),
\end{array}
\right.
\end{align}
\begin{align}
	~~~~
\left\{
\begin{array}{ll}
\frac{d}{dt}[f(t,X(t,x))] \geq h(t,X(t,x)) $ in $ \mathscr{D}^{'}((0,T)\times\Omega), \\
f(t=0,x)\geq f_0(x),
\end{array}
\right.
\end{align}
\begin{align}
	~~~~~
\left\{
\begin{array}{ll}
\frac{d}{dt}[f(t,X(t,x))] \geq h(t,X(t,x)) $ in $ \mathscr{D}^{'}((0,T)) $ for a.e. $ \ x \in\Omega, \\
f(t=0,x)\geq f_0(x).
\end{array}
\right.
\end{align}
In this case, we also have that $f(t,X(t,x))\in {\rm BV}\left(0,T;\mathcal{M}(\Omega)\right)$ and that for $a.e.~ x\in\Omega$, the function $f(t,X(t,x))\in {\rm BV}(0,T)$ and $h\left(t,X(t,x)\right)\in L^1(0,T)$.
\end{lemm}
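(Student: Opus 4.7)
The statement is a standard consequence of DiPerna--Lions renormalization theory for transport equations with Sobolev velocity fields, so my plan is to follow that template rather than invent anything new. The heart of the matter is the chain rule identity
\[
\frac{d}{dt}\bigl[f(t,X(t,x))\bigr] = \bigl(\partial_t f + u\cdot\nabla f\bigr)(t,X(t,x)),
\]
which is classical when $u$ and $f$ are smooth. The obstacle is that here $u\in L^2(0,T;H^1_0(\Omega))$ is only Sobolev, and $f$ is merely bounded, so the composition $f(t,X(t,x))$ is not a priori differentiable in $t$ and the product $u\cdot\nabla f$ only makes sense distributionally.

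My first step is to regularize $f$ in $x$ by setting $f^{\varepsilon}=f\ast_x\rho_{\varepsilon}$ and testing the distributional inequality in (i) against $\rho_{\varepsilon}(x-\cdot)$ to obtain
\[
\partial_t f^{\varepsilon}+u\cdot\nabla f^{\varepsilon}\;\geq\;h^{\varepsilon}+r^{\varepsilon},\qquad r^{\varepsilon}:=u\cdot\nabla f^{\varepsilon}-\bigl(u\cdot\nabla f\bigr)^{\varepsilon},
\]
where the commutator $r^{\varepsilon}$ is controlled in $L^1_{\mathrm{loc}}((0,T)\times\Omega)$ by the DiPerna--Lions commutator lemma (in exactly the same spirit as Lemma~\ref{renormalized1}, except with the mollification done in $x$ rather than in $q$) and tends to $0$ in that norm as $\varepsilon\to0$. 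Since $f^{\varepsilon}$ is smooth in $x$ and the flow $X(\cdot,x)$ is absolutely continuous, one can now invoke the classical chain rule pointwise in $x$ and integrate in $t$ against a test function. This yields
\[
\frac{d}{dt}\bigl[f^{\varepsilon}(t,X(t,x))\bigr]\;\geq\;h^{\varepsilon}(t,X(t,x))+r^{\varepsilon}(t,X(t,x))
\]
in $\mathscr{D}'((0,T))$ for a.e.\ $x$. Because $X(t,\cdot)$ is measure-preserving (Lemma~\ref{Lions}), composition with $X$ is an isometry of all $L^p$ spaces in $x$, so $r^{\varepsilon}\circ X\to0$ in $L^1_{\mathrm{loc}}$; letting $\varepsilon\to0$ and using $f^{\varepsilon}\to f$ in $L^1_{\mathrm{loc}}$ produces the implication (i)$\Rightarrow$(ii). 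The reverse implication (ii)$\Rightarrow$(i) follows from the same argument run backward using that $X(t,\cdot)$ has a measure-preserving inverse, so that changing variables from $x$ to $y=X(t,x)$ converts a test function against $X$ into a test function in the original coordinates.

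The equivalence (ii)$\Leftrightarrow$(iii) is essentially Fubini: any test function on $(0,T)\times\Omega$ is approximated by finite sums of tensor products $\varphi(t)\psi(x)$, and the distributional inequality against a single $\psi(x)$ gives, after disintegration in $x$, the a.e.-in-$x$ distributional inequality in $t$. The BV regularity claim is then immediate: rewriting the inequality in (iii) as
\[
\frac{d}{dt}\bigl[f(t,X(t,x))\bigr] = h(t,X(t,x))+\mu_x,
\]
with $\mu_x\geq0$ a nonnegative Radon measure on $(0,T)$, and using $f\in L^\infty$ to bound the total variation of the left-hand side against the $L^1$-in-$t$ norm of $h\circ X$ (again using the measure-preserving property to see $h\circ X\in L^1((0,T)\times\Omega)$, hence $h(\cdot,X(\cdot,x))\in L^1(0,T)$ for a.e.\ $x$ by Fubini), yields $f(\cdot,X(\cdot,x))\in BV(0,T)$. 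The only genuinely delicate step in the whole argument is the commutator bound for $r^{\varepsilon}$, which requires $u\in L^2_tH^1_x$ rather than just $L^2_tL^2_x$ — everything else is measure-theoretic bookkeeping once that is in hand.
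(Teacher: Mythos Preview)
The paper does not prove this lemma: it is quoted verbatim from \cite{Masmoudi2013} and used as a black box in the compactness arguments of Section~4. There is therefore no in-paper proof to compare against.

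Your sketch follows the standard DiPerna--Lions template and is correct in outline. One technical point you glossed over: after mollifying only in $x$, the function $f^{\varepsilon}(t,x)$ is smooth in $x$ but $\partial_t f^{\varepsilon}$ is still only a distribution in $t$, so the classical chain rule along $t\mapsto X(t,x)$ is not immediately available. The clean fix is to mollify in both $t$ and $x$ (or to first rewrite the inequality as an equation $\partial_t f + u\cdot\nabla f = h + \nu$ with $\nu\geq 0$ a Radon measure, mollify, and carry the nonnegative term through). Likewise, your passage (ii)$\Rightarrow$(i) via ``running backward'' deserves one more line: you use that $x\mapsto X(t,x)$ is invertible with measure-preserving inverse for each $t$, which is part of the DiPerna--Lions flow package but worth stating. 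With those two clarifications the argument is complete; this is essentially how the result is proved in \cite{Masmoudi2013}.
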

\section{Priori estimates}
\begin{prop}\cite{Masmoudi2dfluid}\label{prop1}
Let $d\geq2$. Assume that $(u,g)$ is smooth solution of system \eqref{eq0} with $u_0\in L^2$, $\int_{\mathbb{R}^d}g_0\psi_{\infty}dq=0$ and $g_0 \in L^2(\mathcal{L}^2)$, then
\begin{align}\label{1ineq1}
\|u\|_{L^{\infty}_T(L^2)} + \|u\|_{L^{2}_T(\dot{H}^1)} \leq \|u_0\|_{L^2} + C\|g_0\|_{L^2(\mathcal{L}^2)},
\end{align}
and
\begin{align}\label{1ineq1'}
\|g\|_{L^{\infty}_T(L^{2}(\mathcal{L}^2))}+ \|\nabla_q g\|_{L^{2}_T (L^{2}(\mathcal{L}^2))}\leq \|g_0\|_{L^{2}(\mathcal{L}^2)}.~~~
\end{align}
Moreover, if $g_0 \in L^{p}(\mathcal{L}^2)$, then we obtain
\begin{align}\label{1ineq2}
\|g\|_{L^{\infty}_T(L^p(\mathcal{L}^2))} \leq \|g_0\|_{L^p(\mathcal{L}^2)},~~~~
\end{align}
for any $p\in [1,\infty]$.
\end{prop}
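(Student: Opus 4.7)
The plan is to derive the three estimates in sequence, using the pointwise microscopic norm $\|g(t,x,\cdot)\|_{\mathcal{L}^2}^2$ as the central quantity that links the microscopic and macroscopic levels. First I would multiply the second equation of \eqref{eq0} by $g\psi_{\infty}$ and integrate only in $q$. The dissipation contributes $\int_{\mathbb{R}^d}|\nabla_q g|^2 \psi_{\infty}\,dq$ after one integration by parts. The decisive feature of the co-rotation case is that the stretching term vanishes: two integrations by parts in $q$ give
\begin{align*}
\int_{\mathbb{R}^d} g\,{\rm div}_q[\Omega\cdot q\,g\psi_{\infty}]\,dq = \tfrac{1}{2}\int_{\mathbb{R}^d} g^2 \partial^i_q(\Omega_{ij}q_j\psi_{\infty})\,dq = \tfrac{1}{2}\int_{\mathbb{R}^d} g^2 (\Omega_{ii} - \Omega_{ij}q_iq_j)\psi_{\infty}\,dq = 0,
\end{align*}
using $\Omega_{ii}=0$ and the antisymmetry of $\Omega$ paired against the symmetric tensor $q\otimes q$. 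Combined with ${\rm div}_x u = 0$, the transport term collapses into $\tfrac{1}{2}{\rm div}_x(u\|g\|_{\mathcal{L}^2}^2)$, yielding the pointwise-in-$x$ identity
\begin{align*}
\partial_t \|g\|_{\mathcal{L}^2}^2 + {\rm div}_x\!\left(u\|g\|_{\mathcal{L}^2}^2\right) + 2\|\nabla_q g\|_{\mathcal{L}^2}^2 = 0.
\end{align*}

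Integrating this identity in $x$ and then in $t$ yields \eqref{1ineq1'} directly. For \eqref{1ineq1}, I would test the Navier--Stokes equation against $u$; the only nontrivial contribution is $-\int \tau(g)\!:\!\nabla u\,dx$, which Cauchy--Schwarz in $q$ bounds by $C\|\langle q\rangle g\|_{L^2(\mathcal{L}^2)}\|\nabla u\|_{L^2}$. Integrating the $g$-equation in $q$ shows that $\int g\psi_{\infty}\,dq = \int g_0\psi_{\infty}\,dq = 0$ is preserved in time, so Lemma \ref{Lemma1} upgrades the stress bound to $\|\tau(g)\|_{L^2} \lesssim \|\nabla_q g\|_{L^2(\mathcal{L}^2)}$. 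Young's inequality absorbs half of $\|\nabla u\|_{L^2}^2$, while \eqref{1ineq1'} controls the time integral of $\|\nabla_q g\|_{L^2(\mathcal{L}^2)}^2$, closing \eqref{1ineq1}.

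For \eqref{1ineq2}, I would discard the non-negative dissipation in the pointwise identity to get the transport inequality $\partial_t G + u\cdot\nabla_x G \leq 0$ for $G := \|g\|_{\mathcal{L}^2}^2$. Applying Proposition \ref{renormalized2} with $B(s) = s^{p/2}$ (or simply the chain rule in the smooth-solution setting) and integrating in $x$ gives $\frac{d}{dt}\int G^{p/2}\,dx \leq 0$; since $\|g\|_{L^p(\mathcal{L}^2)}^p = \int G^{p/2}\,dx$, this yields \eqref{1ineq2} for $p\in[1,\infty)$. The endpoint $p = \infty$ then follows either by letting $p\to\infty$ or, equivalently, from the maximum principle for $G$ along the divergence-free Lagrangian flow of $u$.

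The genuinely delicate point is the cancellation in the first step, which is special to the choice $\sigma(u) = \Omega$. For the general case $\sigma(u) = \nabla u$, the stretching term would leave a residual $\int q_i \partial_j u_i\, q_j g^2 \psi_{\infty}\,dq$ that cannot be absorbed without higher regularity of $u$, and this is precisely why the co-rotation restriction underlies the entire paper. Once this algebraic cancellation is in hand, every remaining step is standard energy estimation: the macroscopic equation is driven by a controllable stress, and the microscopic equation obeys a propagation of $\mathcal{L}^2$-norms along the flow of $u$.
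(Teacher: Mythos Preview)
Your proof is correct. The paper does not give its own proof of this proposition---it is cited from \cite{Masmoudi2dfluid}---but your argument is exactly the standard one and matches how the paper itself derives related estimates: the pointwise-in-$x$ identity you write is precisely \eqref{3ineq13}, and your strategy of multiplying by $\|g\|_{\mathcal{L}^2}^{p-2}$ to reach the $L^p(\mathcal{L}^2)$ bound is the same device used in the proofs of Propositions \ref{prop2} and \ref{Conservation}. One minor remark: invoking Proposition \ref{renormalized2} for the scalar transport inequality in $(t,x)$ is a slight mislabel, since that proposition concerns the full $(t,x,q)$ equation; but as you note, in the smooth-solution setting the chain rule suffices directly.
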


We establish additional weighted energy estimates in the following proposition.
\begin{prop}\label{prop2}
Let $d\geq2$ and $p\in [2,\infty]$. Assume $(u,g)$ is smooth solution of system \eqref{eq0} with $\int_{\mathbb{R}^d}g_0\psi_{\infty}dq=0$ and $\langle q\rangle g_0 \in L^{p}(\mathcal{L}^2)$. There exists a positive constant $C$ such that
\begin{align}\label{1ineq3}
\|\langle q\rangle g\|_{L^{\infty}_T(L^{p}(\mathcal{L}^2))} \leq \|\langle q\rangle g_0\|_{L^{p}(\mathcal{L}^2)}e^{CT}.
\end{align}
Moreover, if $p=2$, then there exists a positive constant $C$ such that
\begin{align}\label{1ineq3'}
~~~\|\langle q\rangle g\|^2_{L^{\infty}_T(L^{2}(\mathcal{L}^2))} + \int_0^T \|\langle q\rangle \nabla_q g\|^2_{L^{2}(\mathcal{L}^2)}dt \leq C\|\langle q\rangle g_0\|^2_{L^{2}(\mathcal{L}^2)}.
\end{align}
\end{prop}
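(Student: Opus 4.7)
The plan is to test the equation $\eqref{eq0}_2$ for $g$ against the weight $\langle q\rangle^2 g\psi_\infty$ and to work with the microscopic quantity $F(t,x) := \int_{\mathbb{R}^d} \langle q\rangle^2 g^2\,\psi_\infty\,dq$, which is exactly $\|\langle q\rangle g(t,x,\cdot)\|_{\mathcal{L}^2}^2$. Direct differentiation gives a pointwise-in-$x$ transport/dissipation identity
\begin{align*}
\partial_t F + u\cdot\nabla_x F + 2\|\langle q\rangle\nabla_q g\|_{\mathcal{L}^2}^2 + 2F = 2(d+1)\|g\|_{\mathcal{L}^2}^2,
\end{align*}
where I would verify each term separately: the spatial transport collapses to $u\cdot\nabla_x F$ using $\mathrm{div}\,u=0$; the dissipation $\mathcal{L}g$ produces $\|\langle q\rangle\nabla_q g\|_{\mathcal{L}^2}^2$ plus a cross term which, after integration by parts using $\nabla_q\langle q\rangle^2=2q$ and $\nabla_q\psi_\infty=-q\psi_\infty$, supplies the $-2F+2(d+1)\|g\|_{\mathcal{L}^2}^2$; and the co-rotation coupling $\tfrac{1}{\psi_\infty}\mathrm{div}_q[\Omega\cdot q\,g\psi_\infty]$ contributes nothing thanks to the antisymmetry identity $q^{T}\Omega q=0$ (so both the $2qg$ and $\langle q\rangle^2\nabla_q g$ parts of $\nabla_q(\langle q\rangle^2 g)$ integrate out after a further IBP using $q\cdot\nabla_q(\langle q\rangle^2\psi_\infty)=(2-\langle q\rangle^2)|q|^2$ combined with $q^{T}\Omega q=0$).

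For the first estimate \eqref{1ineq3}, I would multiply the pointwise identity by $\tfrac{p}{2}F^{p/2-1}$ and integrate in $x$. The divergence-free transport disappears, the dissipation piece $-p\int F^{p/2-1}\|\langle q\rangle\nabla_q g\|_{\mathcal{L}^2}^2\,dx$ is nonpositive and can be discarded, and the inhomogeneous term is controlled via the trivial pointwise bound $\|g\|_{\mathcal{L}^2}^2\le\|\langle q\rangle g\|_{\mathcal{L}^2}^2=F$. This yields
\begin{align*}
\frac{d}{dt}\|\langle q\rangle g\|_{L^p(\mathcal{L}^2)}^{p} \le p d\,\|\langle q\rangle g\|_{L^p(\mathcal{L}^2)}^{p},
\end{align*}
and Gronwall gives \eqref{1ineq3} for every $p\in[2,\infty)$. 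The case $p=\infty$ follows either by passing $p\to\infty$ or, equivalently, by integrating the pointwise inequality $\partial_t F+u\cdot\nabla_x F\le 2dF$ along the (smooth, measure-preserving) characteristics of $u$.

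For the sharp bound \eqref{1ineq3'} at $p=2$, the idea is to avoid the $e^{CT}$ loss by exploiting the Gaussian Poincaré inequality. Integrating the pointwise identity in $x$ gives
\begin{align*}
\frac{d}{dt}\|\langle q\rangle g\|_{L^2(\mathcal{L}^2)}^{2}+2\|\langle q\rangle\nabla_q g\|_{L^2(\mathcal{L}^2)}^{2}+2\|\langle q\rangle g\|_{L^2(\mathcal{L}^2)}^{2}=2(d+1)\|g\|_{L^2(\mathcal{L}^2)}^{2}.
\end{align*}
Integrating the $g$-equation in $q$ shows that $\int g\psi_\infty dq$ is transported by $u$, so the zero-mean condition $\int g_0\psi_\infty dq=0$ propagates and Lemma \ref{Lemma1} applies pointwise in $x$, giving $\|g\|_{L^2(\mathcal{L}^2)}^{2}\le\|\nabla_q g\|_{L^2(\mathcal{L}^2)}^{2}\le\|\langle q\rangle\nabla_q g\|_{L^2(\mathcal{L}^2)}^{2}$. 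Rather than attempting to absorb directly on the left (which fails because the coefficient $2(d+1)$ is too large), I would integrate in time and use the basic $L^2(\mathcal{L}^2)$ energy law from Proposition \ref{prop1}, namely $\int_0^T\|\nabla_q g\|_{L^2(\mathcal{L}^2)}^{2}dt\le\|g_0\|_{L^2(\mathcal{L}^2)}^{2}\le\|\langle q\rangle g_0\|_{L^2(\mathcal{L}^2)}^{2}$, to dominate the cumulative contribution of $2(d+1)\int_0^T\|g\|_{L^2(\mathcal{L}^2)}^2\,dt$ by $C\|\langle q\rangle g_0\|_{L^2(\mathcal{L}^2)}^{2}$.

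The only step that is more than bookkeeping is the verification that the co-rotation term is killed by $q^{T}\Omega q=0$ with the weight $\langle q\rangle^2$ in place: this is the point at which the co-rotation hypothesis becomes essential, since for $\sigma(u)=\nabla u$ the analogous computation produces a genuine $\nabla u$-coupling that cannot be controlled by $F$ alone and would force a significantly different argument.
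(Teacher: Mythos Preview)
Your proposal is correct and follows essentially the same route as the paper: both test $\eqref{eq0}_2$ against $\langle q\rangle^2 g\psi_\infty$, verify that the co-rotation term vanishes by antisymmetry, obtain the pointwise transport--dissipation identity (your version is just the paper's \eqref{1ineq7} rewritten via $\|qg\|_{\mathcal{L}^2}^2=F-\|g\|_{\mathcal{L}^2}^2$), then multiply by $F^{p/2-1}$ and apply Gronwall for \eqref{1ineq3}, while for \eqref{1ineq3'} both integrate the $p=2$ identity in $(t,x)$ and bound $\int_0^T\|g\|_{L^2(\mathcal{L}^2)}^2\,dt$ using Proposition~\ref{prop1}.
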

\begin{proof}
Taking $\mathcal{L}^2$ inner product with $\langle q\rangle^2g\psi_{\infty}$ to system $\eqref{eq0}_2$, we infer that
\begin{align}\label{1ineq4}
\frac{1}{2}\frac{d}{dt}\|\langle q\rangle g\|^2_{\mathcal{L}^2} + \frac{1}{2}u\cdot\nabla\|\langle q\rangle g\|^2_{\mathcal{L}^2} - \int_{\mathbb{R}^d}\mathcal{L}g\cdot\psi_{\infty}g\langle q\rangle^2dq = \int_{\mathbb{R}^d} {\rm div}_q\left(\Omega q\cdot g\psi_{\infty}\right)\cdot\langle q\rangle^2gdq.
\end{align}
It follows that
\begin{align}\label{1ineq5}
~~\int_{\mathbb{R}^d} {\rm div}_q\left(\Omega q\cdot g\psi_{\infty}\right)\cdot\langle q\rangle^2gdq = 0,
\end{align}
and
\begin{align}\label{1ineq6}
-\int_{\mathbb{R}^d}\mathcal{L}g\cdot\psi_{\infty}g\langle q\rangle^2dq = \int_{\mathbb{R}^d}\langle q\rangle^2 |\nabla_qg|^2\psi_{\infty}dq + \int_{\mathbb{R}^d} |qg|^2\psi_{\infty}dq - d\int_{\mathbb{R}^d}g^2\psi_{\infty}dq.~~~
\end{align}
Combining \eqref{1ineq4}, \eqref{1ineq4} and \eqref{1ineq6}, we obtain
\begin{align}\label{1ineq7}
~~~~\frac{1}{2}\frac{d}{dt}\|\langle q\rangle g\|^2_{\mathcal{L}^2} + \frac{1}{2}u\cdot\nabla\|\langle q\rangle g\|^2_{\mathcal{L}^2} + \|\langle q\rangle\nabla_qg\|^2_{\mathcal{L}^2} + \|qg\|^2_{\mathcal{L}^2}  = d\|g\|^2_{\mathcal{L}^2},
\end{align}
which implies that
\begin{align}\label{1ineq8}
\frac{1}{2}\frac{d}{dt}\|\langle q\rangle g\|^2_{\mathcal{L}^2} + \frac{1}{2}u\cdot\nabla\|\langle q\rangle g\|^2_{\mathcal{L}^2} \leq d\|\langle q\rangle g\|^2_{\mathcal{L}^2}.
\end{align}
Multiplying $\|\langle q\rangle g\|^{p-2}_{\mathcal{L}^2}$ to \eqref{1ineq8}, we deduce that
\begin{align}\label{1ineq9}
\frac{1}{p}\frac{d}{dt}\|\langle q\rangle g\|^p_{\mathcal{L}^2} + \frac{1}{p}u\cdot\nabla\|\langle q\rangle g\|^p_{\mathcal{L}^2} \leq d\|\langle q\rangle g\|^p_{\mathcal{L}^2}.
\end{align}
Integrating over $\mathbb{R}^d$ with respect to $x$, we have
\begin{align}\label{1ineq10}
\frac{1}{p}\frac{d}{dt}\|\langle q\rangle g\|^p_{L^{p}(\mathcal{L}^2)} \leq d\|\langle q\rangle g\|^p_{L^{p}(\mathcal{L}^2)}.~~~~~~~~~~~~~~
\end{align}
Applying Gronwall's inequality to \eqref{1ineq10}, we conclude that
\begin{align}\label{1ineq11}
\|\langle q\rangle g\|_{L^{\infty}_T(L^{p}(\mathcal{L}^2))} \leq \|\langle q\rangle g_0\|_{L^{p}(\mathcal{L}^2)}e^{CT},~~~~~~~~~~
\end{align}
for any $p\in [2,\infty]$. Moreover, integrating \eqref{1ineq7} over $[0,T]\times\mathbb{R}^d$ and using \eqref{1ineq1}, we obtain
\begin{align}\label{1ineq12}
\|\langle q\rangle g\|^2_{L^{\infty}_TL^2(\mathcal{L}^2)}  + \int_0^T \|\langle q\rangle \nabla_q g\|^2_{L^2(\mathcal{L}^2)}dt \leq C\|\langle q\rangle g_0\|^2_{L^{2}(\mathcal{L}^2)}.~~~~
\end{align}
We thus complete the proof of Proposition \ref{prop2}.
\end{proof}

The following new energy estimates play a key role in the proof of the equi-integrability of $\|\nabla_qg\|^2_{\mathcal{L}^2}$.
\begin{prop}\label{Conservation}
Let $d\geq2$. Assume $(u,g)$ is smooth solution of system \eqref{eq0} with initial data $\nabla_q g_0 \in L^{p}(\mathcal{L}^2)$ for some $p\in [2,\infty]$. Let $\int_{\mathbb{R}^2} g_0\psi_{\infty}dq=0$. There exists a positive constant $C$ such that
\begin{align}\label{1ineq13}
\| \nabla_q g\|_{L^{\infty}_T(L^{p}(\mathcal{L}^2))} \leq \|\nabla_q g_0\|_{L^{p}(\mathcal{L}^2)}.~~~~~~~~~~~~~
\end{align}
Moreover, if $p=2$, then there exists a positive constant $C$ such that
\begin{align}\label{1ineq14}
\|\nabla_q g\|^2_{L^{\infty}_T(L^{2}(\mathcal{L}^2))} + \int_0^T \|\nabla^2_q g\|^2_{L^{2}(\mathcal{L}^2)}ds \leq \|\nabla_q g_0\|^2_{L^2(\mathcal{L}^2)}.~~~~~~~~~~~
\end{align}
\end{prop}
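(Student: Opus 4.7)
The plan is to mimic the scheme used in Proposition \ref{prop2}, but now applying $\nabla_q$ to the transport-diffusion equation for $g$ before performing the $\mathcal{L}^2$ energy estimate. Writing $\mathcal{L} g = \Delta_q g - q\cdot\nabla_q g$ and unfolding the micro coupling term, the equation for $g$ takes the form
\begin{equation*}
\partial_t g + u\cdot\nabla_x g - \mathcal{L}g = \Omega_{ij}q_j\,\partial_{q_i}g,
\end{equation*}
where the antisymmetry $\Omega^T=-\Omega$ and $\mathrm{tr}\,\Omega=0$ have already been exploited to write the RHS without a $g$ term (since $\Omega_{ij}q_iq_j=0$). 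Differentiating in $q_k$ and using the commutator identity $\partial_{q_k}\mathcal{L}g=\mathcal{L}\partial_{q_k}g-\partial_{q_k}g$ (which comes from $[\partial_{q_k},q\cdot\nabla_q]=\partial_{q_k}$), one obtains
\begin{equation*}
\partial_t\partial_{q_k}g + u\cdot\nabla_x\partial_{q_k}g - \mathcal{L}\partial_{q_k}g + \partial_{q_k}g = \Omega_{ij}q_j\,\partial_{q_i}\partial_{q_k}g + \Omega_{ik}\,\partial_{q_i}g.
\end{equation*}

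Next, I would take the $\mathcal{L}^2$ inner product of this equation against $\partial_{q_k}g\,\psi_\infty$ and sum over $k$. The term $\Omega_{ik}\partial_{q_i}g\,\partial_{q_k}g$ vanishes pointwise by antisymmetry of $\Omega$, and the transport-in-$q$ term $\Omega_{ij}q_j\partial_{q_i}\partial_{q_k}g\,\partial_{q_k}g=\tfrac12(\Omega q)\cdot\nabla_q|\nabla_q g|^2$ vanishes after integration by parts in $q$ because $\mathrm{div}_q[(\Omega q)\psi_\infty]=(\mathrm{tr}\,\Omega-\Omega_{ij}q_iq_j)\psi_\infty=0$. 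For the dissipative term, the standard identity $-\int f\,\mathcal{L}f\,\psi_\infty\,dq=\int|\nabla_q f|^2\psi_\infty\,dq$ yields $\|\nabla_q^2 g\|_{\mathcal{L}^2}^2$. All in all one arrives at the pointwise-in-$x$ identity
\begin{equation*}
\tfrac12\partial_t\|\nabla_q g\|_{\mathcal{L}^2}^2 + \tfrac12 u\cdot\nabla_x\|\nabla_q g\|_{\mathcal{L}^2}^2 + \|\nabla_q^2 g\|_{\mathcal{L}^2}^2 + \|\nabla_q g\|_{\mathcal{L}^2}^2 = 0.
\end{equation*}

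From here, the $L^p$ estimate follows exactly as in Proposition \ref{prop2}: multiplying by $\|\nabla_q g\|_{\mathcal{L}^2}^{p-2}$, dropping the positive dissipation terms, integrating in $x$, and using $\mathrm{div}\,u=0$ gives $\frac{d}{dt}\|\nabla_q g\|_{L^p(\mathcal{L}^2)}^p\le 0$, hence \eqref{1ineq13}; the case $p=\infty$ comes by sending $p\to\infty$. For $p=2$, integration of the displayed identity over $[0,T]\times\mathbb{R}^d$ gives the full energy balance, and discarding the nonnegative $\int_0^T\|\nabla_q g\|_{L^2(\mathcal{L}^2)}^2\,ds$ contribution produces \eqref{1ineq14}.

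The only delicate step is the cancellation of the rotation contributions, specifically verifying that the two new terms introduced by the commutator $[\nabla_q,(\Omega q)\cdot\nabla_q]=\Omega\,\nabla_q$ respectively vanish pointwise (by antisymmetry) and after integration against $\psi_\infty$ (by the weighted divergence computation above); this is where the co-rotational assumption $\sigma(u)=\Omega$ is essential, since a symmetric piece of $\nabla u$ would produce a non-vanishing quadratic term in $\nabla_q g$ that could not be absorbed and would break the clean inequality with constant $1$ on the right of \eqref{1ineq13}.
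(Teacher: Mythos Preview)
Your proposal is correct and follows essentially the same approach as the paper: apply $\nabla_q$ to the $g$-equation, take the $\mathcal{L}^2$ inner product with $\nabla_q g\,\psi_\infty$, verify that the rotation contributions cancel by the antisymmetry of $\Omega$, compute the dissipation via the commutator $\nabla_q\mathcal{L}=\mathcal{L}\nabla_q-\nabla_q$, and arrive at the pointwise identity $\tfrac12\partial_t\|\nabla_q g\|_{\mathcal{L}^2}^2+\tfrac12 u\cdot\nabla_x\|\nabla_q g\|_{\mathcal{L}^2}^2+\|\nabla_q^2 g\|_{\mathcal{L}^2}^2+\|\nabla_q g\|_{\mathcal{L}^2}^2=0$, from which both \eqref{1ineq13} and \eqref{1ineq14} follow exactly as you describe. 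The only cosmetic difference is that the paper writes out the vanishing of the rotation term in index form (their display \eqref{1ineq15}) rather than invoking the commutator $[\nabla_q,(\Omega q)\cdot\nabla_q]=\Omega\nabla_q$ and the weighted divergence identity $\mathrm{div}_q[(\Omega q)\psi_\infty]=0$ as you do.
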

\begin{proof}
	Applying $\nabla_q$ to system $\eqref{eq0}_2$ and taking $\mathcal{L}^2$ inner product with $\nabla_qg\psi_{\infty}$ , we infer that
	\begin{align}\label{1ineq14'}
		~~\frac{1}{2}\frac{d}{dt}\|\nabla_q g\|^2_{\mathcal{L}^2} + \frac{1}{2}u\cdot\nabla\|\nabla_q g\|^2_{\mathcal{L}^2} &- \int_{\mathbb{R}^d}\nabla_q\mathcal{L}g\cdot\psi_{\infty}\nabla_qgdq \\ \notag
		&= \int_{\mathbb{R}^{d}}\nabla_q\left(\frac{1}{\psi_{\infty}}\nabla_q \cdot \big( \Omega q g \psi_{\infty}\big)\right)\nabla_qg\psi_{\infty}dq.~~~~~~~~~~~~~~~
	\end{align}
According to the antisymmetry of $\Omega$, one can arrive at
\begin{align}\nonumber
\int_{\mathbb{R}^{d}}\nabla_q\left(\frac{1}{\psi_{\infty}}\nabla_q \cdot \big( \Omega q g \psi_{\infty}\big)\right)\nabla_qg\psi_{\infty}dq &= \int_{\mathbb{R}^{d}}\nabla^l_q\big(\Omega^{ik} q_k\nabla^i_qg - \Omega^{ik} q_kq_ig\big)\nabla^l_qg\psi_{\infty}dq \\ \notag
&= \int_{\mathbb{R}^{d}}\big(\Omega^{ik} \delta^l_k\nabla^i_qg + \Omega^{ik} q_k\nabla^{il}_qg\big)\nabla^l_qg\psi_{\infty}dq \label{1ineq15} \\
&- \int_{\mathbb{R}^{d}}\big(\Omega^{ik} (\delta^l_kq_i+\delta^l_iq_k)g+\Omega^{ik} q_kq_i\nabla^l_qg\big)\nabla^l_qg\psi_{\infty}dq=0.
\end{align}
By virtue of Lemma \ref{Lemma1}, we deduce that
\begin{align}\nonumber
\int_{\mathbb{R}^{d}}\nabla_q \left(\frac{1}{\psi_{\infty}}\nabla_q \cdot\left(\nabla_q g\psi_{\infty}\right)\right)\nabla_qg\psi_{\infty}dq &=\int_{\mathbb{R}^{d}}\left(\frac{1}{\psi_{\infty}}\nabla_q\cdot\left(\nabla_q \nabla_q g\psi_{\infty}\right)-\nabla_qg\right)\nabla_qg\psi_{\infty}dq~~~~~~~~~\\ \notag
&=\int_{\mathbb{R}^{d}}\nabla_q\cdot\left(\nabla_q \nabla_q g\psi_{\infty}\right)\nabla_qgdq -\|\nabla_q g\|^2_{\mathcal{L}^2} \label{1ineq16}\\
&=-\|\nabla^2_q g\|^2_{\mathcal{L}^2} -\|\nabla_q g\|^2_{\mathcal{L}^2}.
\end{align}
We infer from system $\eqref{eq0}_2$, \eqref{1ineq15} and \eqref{1ineq16} that
\begin{align}\label{1ineq17}
\frac{1}{2}\frac{d}{dt}\|\nabla_q g\|^2_{\mathcal{L}^2} +\frac{1}{2} u\cdot\nabla\|\nabla_q g\|^2_{\mathcal{L}^2} + \|\nabla^2_q g\|^2_{\mathcal{L}^2}+\|\nabla_q g\|^2_{\mathcal{L}^2}=0.
\end{align}
Analogously, for any $p\geq2$, we conclude that
\begin{align}\label{1ineq18}
&\frac{1}{p}\frac{d}{dt}\|\nabla_q g\|^p_{L^p(\mathcal{L}^2)} \leq 0,~~~~~~~
\end{align}
which implies $\|\nabla_q g\|_{L^{\infty}_T(L^{p}(\mathcal{L}^2))} \leq \|\nabla_q g_0\|_{L^{p}(\mathcal{L}^2)}$. Moreover, integrating \eqref{1ineq17} over $[0,T]\times\mathbb{R}^d$, we obtain eventually
\begin{align}\label{1ineq19}
\|\nabla_q g\|^2_{L^{\infty}_T(L^2(\mathcal{L}^2))}  + \int_0^T \|\nabla^2_q g\|^2_{L^2(\mathcal{L}^2)}dt \leq \|\nabla_q g_0\|^2_{L^2(\mathcal{L}^2)}.~~~~~~~~~~
\end{align}
The proof of Proposition \ref{Conservation} is completed.
\end{proof}
\begin{lemm}\cite{lions1996}\label{prop3}
Let $d=3$. Assume $u\in L^{\infty}_T(L^2)\cap L^2_T(\dot{H}^1)$, then
\begin{align}
\|u\|_{L^{\alpha}_T(L^{\beta})}<\infty,
\end{align}
with $\alpha\in [2,\infty]$ and $\beta=\frac{6\alpha}{3\alpha-4}$. It also follows that
\begin{align}
\|u\otimes u\|_{L^{q}_T(L^{r})}, ~~~ \|u\nabla u\|_{L^{\gamma}_T(L^{\delta})}<\infty,
\end{align}
with $q\in[1,\infty]$, $r=\frac{3q}{3q-2}$ and $\gamma\in[1,2]$, $\delta=\frac{3\gamma}{4\gamma-2}$. Moreover, we have
\begin{align}
\|u\nabla u\|_{ L^{1}_T(L^{\frac{3}{2},1})} < \infty.~~~~~~
\end{align}
\end{lemm}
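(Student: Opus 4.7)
The plan is to treat the three statements as standard consequences of the Sobolev embedding $\dot{H}^1(\mathbb{R}^3) \hookrightarrow L^6$ combined with Hölder interpolation in both space and time; the only genuinely nontrivial item is the Lorentz endpoint, for which one needs the refined Sobolev embedding $\dot{H}^1 \hookrightarrow L^{6,2}$.

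First I would establish the space-time bound $\|u\|_{L^\alpha_T(L^\beta)}<\infty$ for $\alpha\in[2,\infty]$ and $\beta=\frac{6\alpha}{3\alpha-4}$. Sobolev gives $u\in L^2_T(L^6)$ from the $\dot H^1$ control, and one already has $u\in L^\infty_T(L^2)$. Writing the Hölder interpolation $\|u\|_{L^\beta}\le \|u\|_{L^2}^{\theta}\|u\|_{L^6}^{1-\theta}$ with $\tfrac{1}{\beta}=\tfrac{\theta}{2}+\tfrac{1-\theta}{6}$ forces $\theta = 1-\tfrac{2}{\alpha}$, so that $\|u\|_{L^\alpha_T(L^\beta)}^\alpha \le \|u\|_{L^\infty_T(L^2)}^{\alpha-2}\|u\|_{L^2_T(L^6)}^{2}$. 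The endpoints $\alpha=2$ (giving $\beta=6$) and $\alpha=\infty$ (giving $\beta=2$) are just the raw ingredients.

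For the bilinear bounds I would simply use Hölder. Since $\|u\otimes u\|_{L^r} \le \|u\|_{L^{2r}}^2$, taking $r=\tfrac{3q}{3q-2}$ yields $2r=\tfrac{6(2q)}{3(2q)-4}$, which is precisely the exponent provided by step 1 with $\alpha$ replaced by $2q$; this gives the estimate for all $q\in[1,\infty]$. For $\|u\nabla u\|_{L^\delta}$, Hölder in space with $\tfrac{1}{\delta}=\tfrac{1}{\delta'}+\tfrac{1}{2}$ and $\|\nabla u\|_{L^2}$ reduces matters to controlling $u$ in $L^{\delta'}$ with $\delta'=\tfrac{6\gamma}{5\gamma-4}$; Hölder in time with $\tfrac{1}{\gamma}=\tfrac{1}{a}+\tfrac{1}{2}$ gives $a=\tfrac{2\gamma}{2-\gamma}$, and checking $\tfrac{6a}{3a-4}=\delta'$ shows this is exactly the pair provided by step 1, valid for $\gamma\in[1,2]$.

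The main obstacle is the Lorentz endpoint $\|u\nabla u\|_{L^1_T(L^{3/2,1})}<\infty$. For this I would invoke the refined Sobolev embedding $\dot{H}^1(\mathbb{R}^3)\hookrightarrow L^{6,2}$, together with Hölder's inequality in Lorentz spaces: with $\tfrac{1}{3/2}=\tfrac{1}{6}+\tfrac{1}{2}$ and $1=\tfrac{1}{2}+\tfrac{1}{2}$, one has
\begin{equation*}
\|u\,\nabla u\|_{L^{3/2,1}} \le C\,\|u\|_{L^{6,2}}\,\|\nabla u\|_{L^{2,2}} \le C\,\|\nabla u\|_{L^2}^2.
\end{equation*}
Integrating in time then gives $\|u\nabla u\|_{L^1_T(L^{3/2,1})} \le C\|\nabla u\|_{L^2_T(L^2)}^2<\infty$. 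The subtle point here, as opposed to the other items, is that one must use the sharp $L^{6,2}$ refinement rather than the ordinary $L^6$ embedding; with merely $\|u\|_{L^{6,6}}\|\nabla u\|_{L^{2,2}}$ one would only produce $L^{3/2,2}$ which strictly contains $L^{3/2,1}$, and the $\mathcal{H}^1$-compatibility needed elsewhere in the paper relies precisely on landing in $L^{3/2,1}$.
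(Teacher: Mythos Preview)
The paper does not supply its own proof of this lemma; it is simply quoted from \cite{lions1996}. Hence there is nothing in the paper to compare against line by line. That said, your argument is correct and is essentially the standard proof one finds in the cited reference: interpolation between $L^\infty_T(L^2)$ and $L^2_T(L^6)$ for the first claim, then H\"older in space and time for the bilinear bounds on $u\otimes u$ and $u\nabla u$, and finally the refined Sobolev embedding $\dot H^1(\mathbb{R}^3)\hookrightarrow L^{6,2}$ together with O'Neil's H\"older inequality in Lorentz spaces for the endpoint $L^1_T(L^{3/2,1})$. Your exponent checks are accurate, and your remark that the ordinary $L^6$ embedding would only yield $L^{3/2,2}$ is exactly the point of using the Lorentz refinement.
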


The following property is crucial to obtain the weak compactness of the velocity $u$.
\begin{prop}\cite{Bahouri2011}\label{prop4}
Let $d=3$. Assume ${\rm div}~u=0$ and $u\in L^{\infty}_T(L^2)\cap L^2_T(\dot{H}^1)$, then
\begin{align}
\Delta^{-1}{\rm div}(u\cdot\nabla u) \in L^1_T(\dot{W}^{2,1}) \cap L^2_T(\dot{W}^{1,1})\cap L^q_T(L^r),
\end{align}
with $q\in [1,\infty)$ and $r=\frac{3q}{3q-2}$.
\end{prop}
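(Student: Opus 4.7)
The plan is to reduce all three membership claims to boundedness of Calderón--Zygmund operators by expressing $\Delta^{-1}\mathrm{div}(u\cdot\nabla u)$ through Riesz transforms, and then to exploit $\mathrm{div}\,u=0$ via the Hardy-space div-curl inequality recalled in Proposition~\ref{prop0'}. Using divergence-freeness, $(u\cdot\nabla u)_j=\partial_i(u_iu_j)$; then $\partial_k\partial_l\Delta^{-1}=-R_kR_l$ yields
\[
\Delta^{-1}\mathrm{div}(u\cdot\nabla u)=-R_iR_j(u_iu_j),\qquad
\nabla\Delta^{-1}\mathrm{div}(u\cdot\nabla u)=-R_kR_j(u\cdot\nabla u)_j,
\]
\[
\nabla^2\Delta^{-1}\mathrm{div}(u\cdot\nabla u)=-R_kR_l\,\mathrm{div}(u\cdot\nabla u).
\]
Thus the three inclusions reduce to controlling $u_iu_j$, $(u\cdot\nabla u)_j$, and $\mathrm{div}(u\cdot\nabla u)$ in $L^q_T(L^r)$, $L^2_T(\mathcal{H}^1)$, and $L^1_T(\mathcal{H}^1)$ respectively.

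For the $L^q_T(L^r)$ part with $r=\tfrac{3q}{3q-2}\in(1,3]$, Lemma~\ref{prop3} supplies $u\otimes u\in L^q_T(L^r)$ and the classical Calderón--Zygmund bound $\|R_iR_jf\|_{L^r}\lm\|f\|_{L^r}$ (valid since $r>1$) finishes this part. For the $L^2_T(\dot{W}^{1,1})$ bound, I would apply Proposition~\ref{prop0'}: since $u$ is divergence-free and each $\nabla u_j$ is curl-free, we get $\|u\cdot\nabla u\|_{\mathcal{H}^1}\lm\|u\|_{L^2}\|\nabla u\|_{L^2}$, which integrates in time to $u\cdot\nabla u\in L^2_T(\mathcal{H}^1)$; boundedness of $R_kR_j$ on $\mathcal{H}^1$ and $\mathcal{H}^1\hookrightarrow L^1$ then close it.

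The main obstacle is the $L^1_T(\dot{W}^{2,1})$ estimate, because $\nabla^2(u_iu_j)$ carries no a priori Hardy-space structure. The plan is to uncover a hidden null-form cancellation in $\mathrm{div}(u\cdot\nabla u)=\partial_ju_i\,\partial_iu_j$: from $\bigl(\sum_i\partial_iu_i\bigr)^2=0$ one gets $\sum_i(\partial_iu_i)^2=-2\sum_{i<j}\partial_iu_i\,\partial_ju_j$, and combining with the off-diagonal terms produces
\[
\mathrm{div}(u\cdot\nabla u)=-2\sum_{i<j}J_{ij},\qquad J_{ij}:=\partial_iu_i\,\partial_ju_j-\partial_ju_i\,\partial_iu_j.
\]
Each sub-Jacobian $J_{ij}$ should then be rewritten as a div-curl product: defining the vector $F^{(i,j)}$ by $F^{(i,j)}_i=-\partial_ju_i$, $F^{(i,j)}_j=\partial_iu_i$, and zero in the remaining coordinate, one checks $\mathrm{div}\,F^{(i,j)}=0$ and $J_{ij}=F^{(i,j)}\cdot\nabla u_j$ with $\nabla u_j$ curl-free. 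Proposition~\ref{prop0'} then yields $\|J_{ij}\|_{\mathcal{H}^1}\lm\|\nabla u\|_{L^2}^2$, hence $\mathrm{div}(u\cdot\nabla u)\in L^1_T(\mathcal{H}^1)$ with norm $\lm\|\nabla u\|_{L^2_T L^2}^2<\infty$; applying $R_kR_l$ (bounded on $\mathcal{H}^1$) and using $\mathcal{H}^1\hookrightarrow L^1$ completes the estimate. The delicate step is precisely this algebraic rewriting: without $\mathrm{div}\,u=0$ there is no reason for $\partial_ju_i\,\partial_iu_j$ to sit in $\mathcal{H}^1$, and merely being in $L^1_T(L^1)$ would not survive the two derivatives absorbed by $\nabla^2\Delta^{-1}$.
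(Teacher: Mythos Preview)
Your proposal is correct and follows essentially the same strategy as the paper: reduce to Calder\'on--Zygmund/Riesz bounds and place $\mathrm{div}(u\cdot\nabla u)=\partial_i u^j\,\partial_j u^i$ in $\mathcal{H}^1$ via the div--curl inequality of Proposition~\ref{prop0'}. The only difference is that your sub-Jacobian decomposition $-2\sum_{i<j}J_{ij}$ with the auxiliary fields $F^{(i,j)}$ is unnecessarily elaborate: the paper simply observes that for each fixed $i$ the vector $(\partial_i u^j)_j$ is divergence-free (since $\partial_j\partial_i u^j=\partial_i\,\mathrm{div}\,u=0$) and $\nabla u^i$ is curl-free, so $\sum_j(\partial_i u^j)(\partial_j u^i)$ is already a div--curl product and one applies Proposition~\ref{prop0'} directly to each $i$-summand.
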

\begin{proof}
It follows that $ \partial_j(\partial_i u^j) = 0$ and $\nabla\times\nabla u^i =0$. According to Proposition \ref{prop0'}, we infer that
\begin{align}
\|\Delta^{-1}\nabla^2(\partial_iu^j\partial_ju^i)\|_{L^1_T(\mathcal{H}^1)} \leq C\|\partial_iu^j\partial_ju^i\|_{L^1_T(\mathcal{H}^1)} \leq C\|u\|^2_{L^2_T(\dot{H}^1)},
\end{align}
which implies that $\Delta^{-1}{\rm div}(u\cdot\nabla u) \in L^1_T(\dot{W}^{2,1})$. Similarly, we obtain
\begin{align}
\|\Delta^{-1}\nabla {\rm div}(u\nabla u)\|_{L^2_T(\mathcal{H}^1)}\leq C\|u\|_{L^{\infty}_T(L^2)}\|\nabla u\|_{L^2_T(L^2)}.~~~~~~~~~~~
\end{align}
This leads us to get that $\Delta^{-1}{\rm div}(u\cdot\nabla u) \in L^2_T(\dot{W}^{1,1})$. Moreover, applying Lemma \ref{prop3}, we deduce that
\begin{align}
\|\Delta^{-1}{\rm div}~{\rm div}(u\otimes u)\|_{L^q_T(L^r)} \leq C\|u\otimes u\|_{L^q_T(L^r)}.~~~~~~~~~~~~~~~~~~
\end{align}
Thus we have $\Delta^{-1}{\rm div}(u\cdot\nabla u) \in L^q_T(L^r)$.
\end{proof}
\section{Compactness}
\subsection{Compactness on the velocity u}
 In this section, we only consider the case $\Lambda=\mathbb{R}^d$ and $d=3$, since the other case is more easier. We firstly spilt $u$ into $u_1 + u_2 + u_3$, where $u_1,u_2$ and $u_3$ solve respectively
\begin{align}\label{eq1}
\left\{
\begin{array}{ll}
\partial_t u_1 -\Delta u_1 + \nabla P_1 = -{\rm div}~(u\otimes u),\\
{\rm div}~u_1=0,\ \ u_1|_{t=0}=0,
\end{array}
\right.
\end{align}
and
\begin{align}\label{eq2}
\left\{
\begin{array}{ll}
\partial_t u_2 -\Delta u_2 + \nabla P_2 = 0,\\
{\rm div}~u_2=0,\ \ u_2|_{t=0}=u_0,~~~~~~~~~~~~
\end{array}
\right.
\end{align}
where $P_2$ is a given harmonic function and
\begin{align}\label{eq3}
\left\{
\begin{array}{ll}
\partial_t u_3 -\Delta u_3 + \nabla P_3 = {\rm div}~\tau(g),~~~~~\\
{\rm div}~u_3=0,\ \ u_3|_{t=0}=0.
\end{array}
\right.
\end{align}

Let's recall the following compactness property.
\begin{lemm}\cite{lions1996,lions1998}\label{ucompact}
Assume $u^n \in C_T(L^2_w) \cap L^{\infty}_T(L^2)\cap L^2_T(\dot{H}^1)$, then there exists $u \in L^{\infty}_T(L^2)\cap L^2_T(\dot{H}^1)$ such that for any $K \subset\subset \mathbb{R}^3$,
$u^n \rightarrow u$ in $L^q_T(L^p(K))$ with $q\in [2,\infty)$ and $p<\frac{6q}{3q-4}$.
\end{lemm}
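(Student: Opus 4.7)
My plan is to prove strong convergence locally in $L^2_T(L^2(K))$ via an Aubin-Lions-Simon argument applied to the decomposition $u^n = u_1^n + u_2^n + u_3^n$ from \eqref{eq1}--\eqref{eq3}, and then interpolate against the uniform bound on $u^n$ in $L^q_T(L^{6q/(3q-4)})$ supplied by Lemma \ref{prop3}. Passing to a subsequence, the uniform hypothesis immediately gives $u^n \rightharpoonup u$ weak-$\ast$ in $L^\infty_T(L^2)$ and weakly in $L^2_T(\dot H^1)$; the work is to upgrade this to strong local-in-$x$ convergence.

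I would handle the three pieces separately, using heat semigroup estimates to extract time regularity in each case. For $u_2^n = e^{t\Delta}u_0^n$, the parabolic smoothing of the initial data gives uniform $C_T(L^2)$-bounds and hence compactness in $C_T(L^2(K))$ after a mild cut-off. For $u_1^n$, the source $-{\rm div}(u^n\otimes u^n)$ is controlled in $L^q_T(L^r)$ by Lemma \ref{prop3}, and parabolic maximal regularity together with Proposition \ref{prop4} bounds $\partial_t u_1^n$ and $\nabla^2 u_1^n$ in suitable local spaces. For $u_3^n$, Propositions \ref{prop1}--\ref{prop2} bound $\tau(g^n)$ uniformly in $L^\infty_T(L^2)$, so ${\rm div}\,\tau(g^n)$ sits in $L^\infty_T(\dot H^{-1})$, which together with $\Delta u_3^n \in L^2_T(\dot H^{-1})$ places $\partial_t u_3^n$ in $L^2_T(\dot H^{-1})$ uniformly.

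Collecting these bounds, for any compact $K\subset\mathbb{R}^3$ the sequence $u^n$ is uniformly bounded in $L^2_T(H^1(K))$ while $\partial_t u^n$ is uniformly bounded in a space of the form $L^p_T(H^{-1}(K)) + L^1_T(L^1(K))$ for some $p>1$. Since the Rellich-Kondrachov theorem gives the compact embedding $H^1(K)\hookrightarrow L^2(K)$, and $L^2(K)$ embeds continuously into $H^{-1}(K)+L^1(K)$, the Aubin-Lions-Simon lemma yields a subsequence with $u^n\to u$ strongly in $L^2_T(L^2(K))$. For the final step, Lemma \ref{prop3} shows $u^n$ is uniformly bounded in $L^q_T(L^\beta)$ with $\beta=\tfrac{6q}{3q-4}$; for any target exponent $p<\beta$, Hölder interpolation between this uniform bound and the strong $L^2_T(L^2(K))$ convergence produces strong convergence in $L^q_T(L^p(K))$, which is the claim.

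The main obstacle is the poor time regularity of $u_3^n$, whose source ${\rm div}\,\tau(g^n)$ is only in $L^\infty_T(\dot H^{-1})$. At the level of $u^n$ itself this loss is absorbed by the spatial compactness from $H^1\hookrightarrow L^2$, but the same argument does \emph{not} yield strong convergence of $\nabla u^n$ — exactly the point stressed in the introduction, and the reason why passing to the limit in the microscopic nonlinear term $\nabla_q\cdot(\Omega\cdot q\psi_\infty g^n)$ must ultimately rely on the strong compactness of $g^n$ rather than of $\nabla u^n$.
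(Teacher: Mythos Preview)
The paper does not supply a proof of this lemma; it is quoted as a known result from \cite{lions1996,lions1998}. Your argument imports the decomposition $u^n=u_1^n+u_2^n+u_3^n$ of \eqref{eq1}--\eqref{eq3}, and with it the full equation structure of \eqref{eq0}, but none of that is part of the lemma's hypotheses. As literally stated, the lemma assumes only $u^n\in C_T(L^2_w)\cap L^\infty_T(L^2)\cap L^2_T(\dot H^1)$, with no control whatsoever on $\partial_t u^n$, and under those hypotheses alone the conclusion is false: take $u^n(t,x)=\sin(nt)\,\phi(x)$ for a fixed smooth compactly supported divergence-free $\phi$. This sequence meets all the stated bounds yet converges to $0$ only weakly in $L^2_T(L^2(K))$, not strongly.

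What you have in fact sketched is a proof of the compactness assertions for $u^n$ that properly belong inside Proposition~\ref{2prop1}, where the equation is available and your Aubin--Lions--Simon reasoning is legitimate. In the Lions references the corresponding abstract compactness statement carries an extra hypothesis of the type $\partial_t u^n$ bounded in $L^1_T(H^{-s}_{\rm loc})$, supplied in applications by the PDE; the paper has simply suppressed that hypothesis in quoting the result. Your final interpolation step is correct: strong convergence in $L^2_T(L^2(K))$ together with the uniform $L^\infty_T(L^2)\cap L^2_T(L^6)$ bound (hence $L^q_T(L^{6q/(3q-4)})$ by Lemma~\ref{prop3}) does yield strong convergence in $L^q_T(L^p(K))$ for all $q\in[2,\infty)$ and $p<6q/(3q-4)$.
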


Then we introduce different compactness on velocity $u_i$ with $i=1,2,3.$ For more details, one can refer to \cite{lions1996,lions1998}.
\begin{prop}\label{2prop1}
Let $\{u^n\}_{n\in N}\in L^{\infty}_T(L^2)\cap L^2_T(\dot{H}^1)$ and $\{g^n\}_{n\in N} \in L^{\infty}_T(L^2(\mathcal{L}^2))$. Assume $u^n_1,u^n_2$ and $u^n_3$ solve systems \eqref{eq1}, \eqref{eq2} and \eqref{eq3}, respectively. Then there exist $u_1,u_2$ and $u_3$ such that
\begin{align}\label{2ineq1}
u^n_i \rightarrow u_i \in  L^q_T(L^p(K)), \ i=1,3 \ \  and \ \ \  u^n_2 \rightarrow u_2 \in L^{\infty}_T(L^2)\cap L^2_T(\dot{H}^1),
\end{align}
with $K\subset\subset\mathbb{R}^d, q\in [2,\infty)$ and $p<\frac{6q}{3q-4}$. Moreover,
\begin{align}\label{2ineq2}
\nabla u^n_1 \rightarrow \nabla u_1 \in L^{2}_T(L^{p_0}(K))\cap L^1_T(L^{r_0}(K)),~~
\end{align}
with $p_0<2$ and $r_0<3$.
\end{prop}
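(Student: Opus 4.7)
The plan is to analyse each of $u^n_1, u^n_2, u^n_3$ separately, establish uniform parabolic estimates strong enough to apply Lemma \ref{ucompact}, and then refine the argument to obtain first-order compactness of $\nabla u^n_1$. I will treat $d=3$ explicitly; the case $d=2$ is analogous but simpler.

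For $u^n_2$, since \eqref{eq2} is the pure linear heat equation with the fixed initial datum $u_0\in L^2$, the solution $u^n_2=e^{t\Delta}u_0$ is in fact independent of $n$ and the stated convergence in $L^{\infty}_T(L^2)\cap L^2_T(\dot H^1)$ is immediate. For $u^n_1$ and $u^n_3$ I will run a standard $L^2$ energy estimate on the respective heat equations: for \eqref{eq1} the source ${\rm div}(u^n\otimes u^n)$ is controlled via the interpolation $u^n\in L^4_T(L^3)$ coming from $u^n\in L^{\infty}_T(L^2)\cap L^2_T(\dot H^1)$; for \eqref{eq3} the source ${\rm div}\,\tau(g^n)$ is controlled via $\|\tau(g^n)\|_{L^{\infty}_T(L^2)}\lesssim \|\langle q\rangle g^n\|_{L^{\infty}_T(L^2(\mathcal{L}^2))}$, using Lemma \ref{Lemma1} together with the weighted estimate \eqref{1ineq3'}. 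This produces uniform bounds $u^n_i\in L^{\infty}_T(L^2)\cap L^2_T(\dot H^1)$ for $i=1,3$, and the equations themselves give weak-$L^2$ time continuity so that $u^n_i\in C_T(L^2_w)$. Lemma \ref{ucompact} then provides the required local strong convergence $u^n_i\to u_i$ in $L^q_T(L^p(K))$ for every $q\in[2,\infty)$ and $p<\frac{6q}{3q-4}$.

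For the compactness of $\nabla u^n_1$, Lemma \ref{prop3} and Proposition \ref{prop4} ensure that the pressure-corrected source---equivalently the div-curl quantity $u^n\cdot\nabla u^n$---is uniformly bounded in $L^1_T(L^{3/2,1})$ and in $L^2_T(L^1)$. Plugging these into the Duhamel representation of \eqref{eq1} and invoking maximal parabolic regularity (including the Hardy-space $\mathcal{H}^1$ version of Proposition \ref{prop0'} at the $L^1$ endpoint) will give uniform bounds
$$\nabla u^n_1 \in L^2_T(W^{1,3/2}) \cap L^1_T(W^{1,3}),$$
together with a control of $\partial_t\nabla u^n_1$ in a weaker negative-order space read off directly from the equation. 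Local Rellich--Kondrachov embedding combined with Aubin--Lions then delivers strong convergence of $\nabla u^n_1$ in $L^2_T(L^{p_0}(K))\cap L^1_T(L^{r_0}(K))$ for every $p_0<2$ and $r_0<3$, which is in fact slightly more than the stated conclusion.

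The hard part will be the $L^1_T$ endpoint of the pressure-velocity estimate, where the usual Calder\'on--Zygmund theory fails; this is precisely where one must exploit that $u^n\cdot\nabla u^n$, after the Leray projection, is a genuine div-curl product with the Hardy-space $\mathcal{H}^1$ integrability furnished by Proposition \ref{prop0'} and Proposition \ref{prop4}. Everything else should reduce to fairly standard heat-kernel estimates, energy inequalities and Aubin--Lions compactness on compact subsets $K\subset\subset\mathbb{R}^3$.
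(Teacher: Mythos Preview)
Your overall strategy matches the paper's: energy estimates plus Lemma~\ref{ucompact} for the $u^n_i$, and div-curl source bounds plus parabolic regularity plus Aubin--Lions for $\nabla u^n_1$. There is, however, a genuine gap in the $\nabla u^n_1$ step.

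The regularity you claim, $\nabla u^n_1 \in L^2_T(W^{1,3/2}) \cap L^1_T(W^{1,3})$, does not follow. Your two source bounds are $u^n\cdot\nabla u^n \in L^2_T(\mathcal{H}^1)$ (div-curl) and $u^n\cdot\nabla u^n \in L^1_T(L^{3/2,1})$ (Lemma~\ref{prop3}); the first sits at the spatial endpoint and the second at the temporal endpoint $k=1$, and maximal parabolic regularity $\|\Delta v\|_{L^k_T(X)}\lesssim\|f\|_{L^k_T(X)}$ fails at $k=1$ regardless of $X$. The Hardy-space substitute cures only the \emph{spatial} Calder\'on--Zygmund failure (Riesz transforms are bounded on $\mathcal{H}^1$); it does nothing for the \emph{temporal} $L^1$ obstruction, and no heat-kernel smoothing from $L^1_T$ data will produce the full second spatial derivative of $u^n_1$ that Aubin--Lions on $\nabla u^n_1$ requires.

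The paper avoids both endpoints at once by interpolating: since $\mathcal{H}^1\hookrightarrow L^1$ and $L^{3/2,1}\hookrightarrow L^{3/2}$, the source lies in $L^k_T(L^r)$ for every $k\in(1,2)$ with $r=\tfrac{3k}{4k-2}\in(1,\tfrac32)$. Now both exponents are in $(1,\infty)$, so standard maximal regularity gives $u^n_1\in L^k_T(\dot W^{2,r})$. Letting $k\downarrow 1$ pushes $r\uparrow \tfrac32$, and Sobolev $\dot W^{2,r}\hookrightarrow \dot W^{1,r_0}$ with $r_0=\tfrac{3r}{3-r}\uparrow 3$ then supplies the extra spatial derivative on $\nabla u^n_1$ needed for compactness in $L^1_T(L^{r_0}(K))$, $r_0<3$. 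The $L^2_T(L^{p_0}(K))$ conclusion with $p_0<2$ follows from this second-order control combined with the energy bound $\nabla u^n_1\in L^2_T(L^2)$.
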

\begin{proof}
Using standard $L^2$ energy estimate to system \eqref{eq3} with $g^n \in L^{\infty}_T(L^2(\mathcal{L}^2))$, we obtain
\begin{align}\label{2ineq3}
u^n_3\in \dot{W}^{1,1}_T(L^2_w) \cap L^{\infty}_T(L^2)\cap L^2_T(\dot{H}^1) \ \ and \ \  \ P^n_3 = \Delta^{-1}{\rm div}~{\rm div}~\tau^n \in L^{\infty}_T(L^2).~~~~~
\end{align}
We infer from compact embedding theorem that there exists $u_3$ such that $u^n_3 \rightarrow u_3 \in C(0,T;W^{-\varepsilon,2}(K))$ for any positive $\varepsilon$. By virtue of interpolation inequality, we deduce that $u^n_3 \rightarrow u_3 \in  L^q_T(L^p(K))$ with $K\subset\subset\mathbb{R}^d, q\in [2,\infty)$ and $p<\frac{6q}{3q-4}$. According to system \eqref{eq2} and Duhamel's principle, we obtain
\begin{align}\label{2ineq4}
u^n_2 = e^{t\Delta}u^n_0 + \int_0^t e^{(t-s)\Delta}\nabla P^n_2 ds.
\end{align}
Taking harmonic functions $P^n_2 = [P_2]^{\frac 1 n}_x $ such that $P^n_2 \rightarrow P_2 \in H^2$, we deduce that there exists $u_2 \in L^{\infty}_T(L^2)\cap L^2_T(\dot{H}^1)$ such that $u^n_2 \rightarrow u_2 \in L^{\infty}_T(L^2)\cap L^2_T(\dot{H}^1) \hookrightarrow L^q_T(L^p(K))$. Notice that $u^n_1 = u^n - (u^n_2 + u^n_3)$, there exists $u_1 \in L^{\infty}_T(L^2)\cap L^2_T(\dot{H}^1)$ such that $u^n_1 \rightarrow u_1 \in L^q_T(L^p(K))$. Moreover, applying Leray projector $\mathbb{P}={\rm Id} - \Delta^{-1}\nabla~{\rm div}$ to system \eqref{eq1}, we infer from Duhamel's principle that
\begin{align}\label{2ineq5}
u^n_1 = \int_0^t e^{(t-s)\Delta} \mathbb{P}{\rm div}(u^n\otimes u^n) ds.
\end{align}
According to Lemma \ref{prop3}, we have ${\rm div}(u^n\otimes u^n) \in L^2_T(\mathcal{H}^1) \cap L^1_T(L^{\frac{3}{2},1}) \hookrightarrow L^k_T(L^r)$ with $k\in (1,2)$ and $r=\frac{3k}{4k-2}$. Then we deduce that $u_1^n \in L^k_T(\dot{W}^{2,r})$, which implies that $u_1^n \in L^{1}_T(\dot{W}^{1,r_0})$ with $r_0<3$. By virtue of compact embedding theorem and $u_1 \in  L^2_T(\dot{H}^1)$, we have
$\nabla u^n_1 \rightarrow \nabla u_1 \in L^{2}_T(L^{p_0}(K)) \cap L^1_T(L^{r_0}(K))$ with $p_0<2$ and $r_0< 3$. The proof of Proposition \ref{2prop1} is finished.
\end{proof}

Thus the compactness of $u_i$ with $i=1,2,3$ implies that $(u,g)$ satisfies system $\eqref{eq0}_1$ in the sense of Definition \ref{weakdefi}.

\subsection{Compactness on the polymeric distribution g}
According to the compactness of $u$ has been discussed in Proposition \ref{2prop1} and the boundness of $g$ in $L^2(\mathcal{L}^2)$, we infer that
$$
{\rm div}~(u^ng^n) \rightharpoonup {\rm div}~(ug) \in \mathscr{D}'([0,T]\times\mathbb{R}^3\times\mathbb{R}^3).
$$
The main difficulty is to prove that the following weak compactness holds :
\begin{align}\label{2ineq6}
\nabla_q\cdot(\Omega^nqg^n\psi_{\infty})\rightharpoonup\nabla_q\cdot(\Omega qg\psi_{\infty})  \in \mathscr{D}'([0,T]\times\mathbb{R}^3\times\mathbb{R}^3).
\end{align}
According to Proposition \ref{2prop1}, we infer that
$$
~\nabla_q\cdot[\nabla(u^n_1+u^n_2)g^n\psi_{\infty}]
 \rightharpoonup \nabla_q\cdot[\nabla(u_1+u_2)g\psi_{\infty}]\in \mathscr{D}^{'}([0,T)\times\mathbb{R}^3\times\mathbb{R}^3).
$$
Therefore, we are now concerned with $u_3$. We begin with the following equi-integrability for $\{g^n\}_{n\in N}$.
\begin{prop}\label{3prop1}
	Assume that $\{\nabla_qg^n\}_{n\in N}$ is bounded in $L^{2}_T(L^2(\mathcal{L}^2))$ and $\{g^n\}_{n\in N}$ is bounded in $L^{\infty}_T((L^2\cap L^p)(\mathcal{L}^2))$ for some $p>2$. Then $\{|g^n|^2\}_{n\in N}$ is equi-integrable in $L^1_T(L^1(\mathcal{L}^1))$.
\end{prop}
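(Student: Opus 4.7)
The plan is to verify equi-integrability in the sense of uniform absolute continuity: for every $\varepsilon>0$ there exists $\delta>0$ such that $\int_A |g^n|^2 \psi_\infty\,dq\,dx\,dt<\varepsilon$ whenever $A\subset[0,T]\times\mathbb{R}^d_x\times\mathbb{R}^d_q$ has measure smaller than $\delta$ against $\psi_\infty\,dq\,dx\,dt$, uniformly in $n$. The strategy is to dissect $A$ into three pieces that can be controlled separately using the three available bounds: the Gaussian tail of $\psi_\infty$ together with Lemma \ref{Lemma1}, the higher integrability $p>2$ in the $L^p_x(\mathcal{L}^2)$ bound, and the $H^1_q$ regularity coming from $\nabla_q g^n \in L^2_T L^2(\mathcal{L}^2)$.

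First, I would handle the $q$-tail. Since $\int g^n\psi_\infty\,dq$ vanishes (inherited from the structure of the model) and $\nabla_q g^n$ is bounded in $L^2_T L^2(\mathcal{L}^2)$, Lemma \ref{Lemma1} gives $\langle q\rangle g^n$ bounded in $L^2_T L^2(\mathcal{L}^2)$. Then Chebyshev yields
\begin{equation*}
\int_0^T\int_{\mathbb{R}^d_x}\int_{|q|>R}\psi_\infty |g^n|^2\,dq\,dx\,dt \leq \frac{1}{R^2}\int_0^T \|\langle q\rangle g^n\|^2_{L^2(\mathcal{L}^2)}\,dt,
\end{equation*}
which is small uniformly in $n$ for $R$ large. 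This reduces the problem to the strip $|q|\leq R$, on which $\psi_\infty$ is comparable to a positive constant $c(R)$.

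Second, I would handle concentration in $(t,x)$. Setting $\rho^n(t,x):=\|g^n(t,x,\cdot)\|^2_{\mathcal{L}^2}$, the hypothesis yields $\rho^n$ bounded in $L^\infty_T L^{p/2}_x$ with $p/2>1$, so H\"older applied first in $x$ and then in $t$ gives
\begin{equation*}
\int_0^T\int_{E_t}\rho^n\,dx\,dt \leq T^{2/p}\,|E|^{1-2/p}\,\|\rho^n\|_{L^\infty_T L^{p/2}_x},
\end{equation*}
where $E\subset[0,T]\times\mathbb{R}^d$ and $|E|$ is its Lebesgue measure. This controls contributions from sub-regions of $A$ whose projection onto $(t,x)$ has small Lebesgue measure.

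Finally, to handle concentration in small $q$-slices on $B_R$, I would invoke the Sobolev embedding $H^1_q(B_R)\hookrightarrow L^{2^*}_q(B_R)$ (with $2^*=6$ for $d=3$ and any $r<\infty$ for $d=2$). For each slice $A_{t,x}\subset B_R$,
\begin{equation*}
\int_{A_{t,x}}|g^n|^2\,dq \leq C|A_{t,x}|^{2/d}\|g^n(t,x,\cdot)\|^2_{H^1_q(B_R)},
\end{equation*}
and integration in $(t,x)$ against $\|g^n\|^2_{L^\infty_T L^2(\mathcal{L}^2)}+\|\nabla_q g^n\|^2_{L^2_T L^2(\mathcal{L}^2)}$ completes this step. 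Splitting $A$ by thresholding the slice size $|A_{t,x}|$ against the $(t,x)$-projection measure combines the two bounds. The main obstacle I anticipate is precisely the mixed character of the underlying measure: a direct three-variable interpolation between $L^\infty_T L^p_x L^2_q$ and $L^2_T L^2_x L^{2^*}_q$ collapses back to the endpoint $L^2(\psi_\infty\,dq\,dx\,dt)$ with no gain in integrability, so a slice-wise decomposition, treating concentration in $(t,x)$ and in $q$ separately, is essential for circumventing this obstruction.
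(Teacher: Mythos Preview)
Your argument is correct and yields the required equi-integrability, but it follows a genuinely different route from the paper. The paper does precisely the interpolation you rule out in your last paragraph: applying Gagliardo--Nirenberg in the $q$-variable pointwise in $(t,x)$ (Lemma~\ref{Lemma3}) gives $\|g^n\|_{\mathcal{L}^m}\le C\|g^n\|_{\mathcal{L}^2}^{1-\theta}\|\nabla_q g^n\|_{\mathcal{L}^2}^{\theta}$ with $\theta=\tfrac{3(m-2)}{2m}$; then H\"older in $x$ between $L^p_x$ and $L^2_x$, followed by a time integration, places $g^n$ in $L^r_T(L^m(\mathcal{L}^m))$ with $m=\tfrac{10p-12}{3p-2}>2$ and $r=\tfrac{4m}{3(m-2)}$. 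A single Chebyshev estimate on the super-level set $\{|g^n|^2\psi_\infty\ge M\}$ then gives the de~la~Vall\'ee~Poussin bound
\[
\int_0^T\!\!\int_{\mathbb{R}^3}\!\!\int_{\mathbb{R}^3} |g^n|^2\psi_\infty\,\mathbf{1}_{\{|g^n|^2\psi_\infty\ge M\}}\,dq\,dx\,dt\;\le\;C\,M^{-(1-2/m)}\to 0.
\]
So the obstruction you anticipate is an artifact of interpolating the mixed-norm \emph{spaces} directly; applying Gagliardo--Nirenberg fiberwise in $q$ first and only then using H\"older in $(t,x)$ produces a genuine gain $m>2$.

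Your three-region decomposition (Gaussian tail via Lemma~\ref{Lemma1}, $(t,x)$-concentration via $\|g^n\|^2_{\mathcal{L}^2}\in L^\infty_T L^{p/2}_x$, and $q$-concentration on $B_R$ via the unweighted embedding $H^1_q(B_R)\hookrightarrow L^{2^*}_q(B_R)$, combined by thresholding on the slice size $|A_{t,x}|$) is a valid and more elementary alternative that sidesteps any weighted interpolation inequality. Two minor remarks: your Step~1 invokes Lemma~\ref{Lemma1}, which requires $\int g^n\psi_\infty\,dq=0$ --- true for solutions of the model but not stated among the proposition's hypotheses; and the paper's shorter argument yields an explicit exponent $m>2$ that is recycled verbatim in Proposition~\ref{3prop2} for the equi-integrability of $|\nabla_q g^n|^2$, so there is some economy in taking the interpolation route.
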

\begin{proof}
	Applying Lemma \ref{Lemma3} , we have
	\begin{align}\label{3ineq1}
		\|g^n\|_{L^{r}_T(L^{m}(\mathcal{L}^m))} \leq C\|g^n\|^{\frac{3}{m}-\frac{1}{2}}_{L^{\infty}_T(L^p(\mathcal{L}^2))}\|\nabla_qg^n\|^{\frac{2}{r}}_{L^2_T(L^2(\mathcal{L}^2))},~~~~
	\end{align}
	for $r=\frac{4m}{3(m-2)}$ and $m=\frac{10p-12}{3p-2}\in(2,\frac{10}{3})$. It follows from Chebyshev inequality that
	\begin{align}\label{3ineq2}
		\int_0^T\int_{\mathbb{R}^3}\int_{\mathbb{R}^3} |g^n|^2\psi_{\infty} 1_{\left\{|g^n|^2\psi_{\infty}\geq M\right\}} dqdxdt &\leq C_T\|g^n\|^2_{L^{r}_T(L^m(\mathcal{L}^m))}\|g^n\|^{2(1-\frac{2}{m})}_{L^{\infty}_T(L^2(\mathcal{L}^2))}M^{-(1-\frac{2}{m})},
	\end{align}
	which implies that
	\begin{align}\label{3ineq3}
		\int_0^T\int_{\mathbb{R}^3}\int_{\mathbb{R}^3} |g^n|^2 \psi_{\infty}1_{\left\{|g^n|^2\psi_{\infty}\geq M\right\}} dqdxdt \rightarrow 0 \ \ as \ \ \ M  \rightarrow \infty.~~~~
	\end{align}
	This completes the proof of Proposition \ref{3prop1} .
\end{proof}
\textbf{ The proof of Theorem \ref{th1} : }\\
According to Propositions \ref{prop1}, \ref{prop2} and \ref{3prop1} , we introduce the following defect measures :
\begin{align}\label{3ineq7}
\left\{
\begin{array}{ll}
|\nabla(u^n_3-u_3)|^2 \rightharpoonup \mu \in \mathcal{M}(x), \\
|g^n-g|^2 \rightharpoonup \eta \in L^{\infty}_T((L^1\cap L^{\infty})(\mathcal{L}^1)),\\
|\tau^n-\tau|^2\rightharpoonup \alpha \in L^{\infty}_T(L^1\cap L^{\infty}) $ with $ \alpha \leq \int_{\mathbb{R}^d}\eta\psi_{\infty}dq ,  \\
\psi_{\infty}|\nabla_q(g^n-g)|^2 \rightharpoonup \kappa \in \mathcal{M}(x,q),\\
\langle q\rangle g^n\nabla u^n_3 \rightharpoonup \langle q\rangle g\nabla u_3 + \beta \in L^2_T(L^1(\mathcal{L}^2)) $ with $ |\beta|\leq \langle q\rangle\sqrt{\mu}\sqrt{\eta},~~~~~~~~~~
\end{array}
\right.
\end{align}
where $\mathcal{M}(\cdot)$ and $\mathcal{M}(\cdot,\cdot)$ are the spaces of bounded measure on $\mathbb{R}^3$ and $\mathbb{R}^3 \times \mathbb{R}^3$, respectively. Note that all measure inequalities hold in the sense of almost everywhere. For simplify, we omit the notion a.e. here.
Recalling that $u^n_3$ and $u_3$ solve the system \eqref{eq3}, we obtain
\begin{align}\label{3ineq8}
\left\{
\begin{array}{ll}
\frac{1}{2}\partial_t|u^n_3|^2 - \frac{1}{2}\Delta \left(|u^n_3|^2\right) + |\nabla u^n_3|^2 + {\rm div}~(u^n_3P^n_3) = {\rm div}(u^n_3\tau^n) - Tr\left(\tau^n(\nabla u^n_3)^{t}\right), \\
\frac{1}{2}\partial_t|u_3|^2 - \frac{1}{2}\Delta \left(|u_3|^2\right) + |\nabla u_3|^2 + {\rm div}~(u_3P_3) = {\rm div}(u_3\tau) - Tr\left(\tau(\nabla u_3)^{t}\right).
\end{array}
\right.
\end{align}
Passing to the limit in \eqref{3ineq8} and using the convergence properties already shown in \eqref{3ineq7}, we
obtain
\begin{align}\label{3ineq9}
~~~\frac{1}{2}\partial_t|u_3|^2 - \frac{1}{2}\Delta \left(|u_3|^2\right) + |\nabla u_3|^2 + \mu + {\rm div}~(u_3P_3) &= {\rm div}(u_3\tau) - Tr\left(\tau(\nabla u_3)^{t}\right)~~~~~ \\ \notag
&~~~~-\int_{\mathbb{R}^d}\beta_{ji}\frac{q_i\nabla_j\mathcal{U}}{\langle q\rangle}\psi_{\infty} dq.
\end{align}
Thus \eqref{3ineq9} minus \eqref{3ineq8} leads to
\begin{align}\label{3ineq10}
\mu = - \int_{\mathbb{R}^d}\beta_{ji}\frac{q_i\nabla_j\mathcal{U}}{\langle q\rangle}\psi_{\infty} dq,~~~~~~
\end{align}
which implies that $\mu \in L^2_T(L^1)$. According to \eqref{3ineq7}, we infer that
\begin{align}\label{3ineq11}
\int_{\mathbb{R}^3}\beta_{ji}\frac{q_i\nabla_j\mathcal{U}}{\langle q\rangle} \psi_{\infty}dq \leq C\sqrt{\mu}\sqrt{\alpha}.~~~~~
\end{align}
This gives $\mu \leq C\alpha\leq C\int_{\mathbb{R}^3}\eta\psi_{\infty}dq.$ Therefore, we conclude that
\begin{align}\label{3ineq12}
\left(\int_{\mathbb{R}^3}\frac{|\beta|^2}{\langle q\rangle^2}\psi_{\infty} dq\right)^{\frac{1}{2}} \leq C\sqrt{\mu}\left(\int_{\mathbb{R}^3}\eta\psi_{\infty}dq\right)^{\frac 1 2} \leq C\int_{\mathbb{R}^3}\eta\psi_{\infty}dq.~~~~~~~~~
\end{align}
Taking $\mathcal{L}^2$ inner product with $g^n\psi_{\infty}$ to system $\eqref{eq0}_2$, we infer that
\begin{align}\label{3ineq13}
\partial_t \|g^n\|^2_{\mathcal{L}^2} + {\rm div}~(u^n\|g^n\|^2_{\mathcal{L}^2})+ 2\|\nabla_qg^n\|^2_{\mathcal{L}^2}= 0.~~~~~~~
\end{align}
Passing to the limit in \eqref{3ineq13} and using \eqref{3ineq7}, we
obtain
\begin{align}\label{3ineq14}
\partial_t \left(\|g\|^2_{\mathcal{L}^2}+\int_{\mathbb{R}^3}\eta\psi_{\infty}dq\right)+ {\rm div}~\left(u\left(\|g\|^2_{\mathcal{L}^2}+\int_{\mathbb{R}^3}\eta\psi_{\infty}dq\right)\right)+ 2\|\nabla_qg\|^2_{\mathcal{L}^2}+2\|\kappa\|_{\mathcal{M}(q)}= 0.
\end{align}
According to system \eqref{eq0}, Proposition \ref{renormalized2} and \eqref{3ineq7}, we deduce that
\begin{align}\label{3ineq15}
\partial_t \|g\|^2_{\mathcal{L}^2} + {\rm div}~\left(u\|g\|^2_{\mathcal{L}^2}\right)+2\|\nabla_qg\|^2_{\mathcal{L}^2} = \int_{\mathbb{R}^3}\left(\beta_{ij}-\beta_{ji}\right)\frac{q_j}{\langle q\rangle}\nabla^i_qg\psi_{\infty}dq.~~~~
\end{align}
By using \eqref{3ineq12}, \eqref{3ineq14}, \eqref{3ineq15} and Propositions \ref{prop1}, \ref{prop2}, one can arrive at
\begin{align}\label{3ineq16}
\partial_t \int_{\mathbb{R}^3}\eta\psi_{\infty}dq + {\rm div}\left(u\int_{\mathbb{R}^3}\eta\psi_{\infty}dq\right)+ 2\|\kappa\|_{\mathcal{M}(q)}
= \int_{\mathbb{R}^3}\left(\beta_{ij}-\beta_{ji}\right)\frac{q_j}{\langle q\rangle}\nabla^i_q g\psi_{\infty}dq \in L^1_T(L^1_x).~~
\end{align}
We infer from Lemmas \ref{Lions} and \ref{Mild} that
\begin{align}\label{3ineq16'}
\partial_t \int_{\mathbb{R}^3}\eta\psi_{\infty}dq + {\rm div}\left(u\int_{\mathbb{R}^3}\eta\psi_{\infty}dq\right)\leq C\|\langle q\rangle\nabla_q g\|_{\mathcal{L}^2} \int_{\mathbb{R}^3}\eta\psi_{\infty}dq \ \ a.e.~ x\in\mathbb{R}^3 \ ,~~~~~~~~~~~~~~~~~~
\end{align}
and thus
\begin{align}\label{3ineq17}
~~~~\int_{\mathbb{R}^3}\eta\psi_{\infty}dq(X(t,x)) \leq \int_{\mathbb{R}^3}\eta_0\psi_{\infty}dq\cdot e^{C\int_0^t\|\langle q\rangle\nabla_qg\|_{\mathcal{L}^2}ds} \ a.e.~ x\in\mathbb{R}^3 ,~~
\end{align}
where $t\geq 0$ and $X$ is the unique $a.e.~ x\in\mathbb{R}^3$ flow such that
\begin{align}
 \dot{X}=u(t,X),  \ \ \ \ X(0,x)=x.~~
\end{align}
For each $t\in[0,T]$, according to Proposition \ref{prop2} and Minkowski's inequality, we deduce that
\begin{align}\label{3ineq18}
\|\int_0^t\|\langle q\rangle\nabla_qg\|_{\mathcal{L}^2}ds\|_{L^2}\leq \int_0^t\|\langle q\rangle\nabla_qg\|_{L^2(\mathcal{L}^2)}ds<\infty.~~~
\end{align}
Then $e^{C\int_0^t\|\langle q\rangle\nabla_qg\|_{\mathcal{L}^2}ds}<\infty$ implies $\int_{\mathbb{R}^d}\eta\psi_{\infty}dq(X(t,x))=0$ $a.e.~ x\in\mathbb{R}^3$ with $\int_{\mathbb{R}^d}\eta_0\psi_{\infty}dq=0$. Using the invariance of Lebesgue measure, we conclude that $\eta=0$ $a.e.~ (x,q)\in\mathbb{R}^3\times\mathbb{R}^3$ for all $t \geq 0$ and hence $g^n$ converges strongly to $g$. This completes the proof of Theorem \ref{th1}.
\hfill$\Box$

In the process of constructing renormalization equation shown in Theorem \ref{th2}, the main difficulty is to prove that the sum of measures produced by $|\nabla_qg^n|^2$ is positive with lower integrability of $\nabla_qg^n$. Thanks to the new energy estimates in Proposition \ref{Conservation}, we obtain the equi-integrability of  $|g^n|^2$ and $|\nabla_qg^n|^2$.
\begin{prop}\label{3prop2}
	Assume that $\{\nabla_qg^n\}_{n\in N}$ is bounded in $L^{\infty}_T((L^2\cap L^p)(\mathcal{L}^2))$ for some $p>2$ and $\{\nabla^2_qg^n\}_{n\in N}$ is bounded in $L^{2}_T(L^2(\mathcal{L}^2))$, then $\{|\nabla_qg^n|^2\}_{n\in N}$ is equi-integrable in $L^1_T(L^1(\mathcal{L}^1))$.
\end{prop}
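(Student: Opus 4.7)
The plan is to mimic the proof of Proposition \ref{3prop1} line by line after the substitution $g^n \leftrightarrow \nabla_q g^n$. The two inputs used there, namely a uniform $L^\infty_T(L^p(\mathcal{L}^2))$ bound on $g^n$ and a uniform $L^2_T(L^2(\mathcal{L}^2))$ bound on $\nabla_q g^n$, are replaced here by the present hypotheses: a uniform $L^\infty_T(L^p(\mathcal{L}^2))$ bound on $\nabla_q g^n$ and a uniform $L^2_T(L^2(\mathcal{L}^2))$ bound on $\nabla_q^2 g^n$. Both are supplied for smooth approximations by \eqref{1ineq13} and \eqref{1ineq14} of Proposition \ref{Conservation}, so the argument of Proposition \ref{3prop1} transfers essentially verbatim.

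First I would apply the interpolation inequality of Lemma \ref{Lemma3} to $\nabla_q g^n$ in the weighted $\mathcal{L}^p$-setting, exactly as in the earlier proof, taking $m=\frac{10p-12}{3p-2}\in(2,\frac{10}{3})$ and $r=\frac{4m}{3(m-2)}$, to obtain
\begin{equation*}
\|\nabla_q g^n\|_{L^r_T(L^m(\mathcal{L}^m))} \leq C \|\nabla_q g^n\|^{\frac{3}{m}-\frac{1}{2}}_{L^{\infty}_T(L^p(\mathcal{L}^2))} \|\nabla_q^2 g^n\|^{\frac{2}{r}}_{L^2_T(L^2(\mathcal{L}^2))},
\end{equation*}
whose right-hand side is bounded independently of $n$. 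Chebyshev's inequality with threshold $M$ then yields
\begin{equation*}
\int_0^T\!\!\int_{\mathbb{R}^3}\!\!\int_{\mathbb{R}^3} |\nabla_q g^n|^2\psi_\infty \mathbf{1}_{\{|\nabla_q g^n|^2\psi_\infty \geq M\}}\, dq\, dx\, dt \leq C_T \|\nabla_q g^n\|^2_{L^r_T(L^m(\mathcal{L}^m))} \|\nabla_q g^n\|^{2(1-\frac{2}{m})}_{L^{\infty}_T(L^2(\mathcal{L}^2))} M^{-(1-\frac{2}{m})},
\end{equation*}
and since $1-\frac{2}{m}>0$, the right-hand side tends to zero uniformly in $n$ as $M\to\infty$. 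This is precisely the required equi-integrability of $\{|\nabla_q g^n|^2\}$ in $L^1_T(L^1(\mathcal{L}^1))$.

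The only point that merits some attention is checking that Lemma \ref{Lemma3}, stated for unweighted Sobolev interpolation, transfers to the Gaussian-weighted spaces $L^m(\mathcal{L}^m)$ when applied to $\nabla_q g^n$ in place of $g^n$. This is, however, the very same step that was invoked tacitly in Proposition \ref{3prop1}: since $L^r_T(L^m(\mathcal{L}^m))$, $L^\infty_T(L^p(\mathcal{L}^2))$, and $L^2_T(L^2(\mathcal{L}^2))$ all carry the common Gaussian weight $\psi_\infty$, the weight factors cleanly through the interpolation and no new estimate beyond those already in Section 2 is required. I therefore expect no substantial obstruction beyond importing the argument of Proposition \ref{3prop1}; the whole value of the present proposition lies precisely in the fact that Proposition \ref{Conservation} furnishes the two bounds needed to run that argument one derivative higher.
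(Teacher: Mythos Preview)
Your proposal is correct and follows essentially the same route as the paper's own proof: both use the same choice $m=\frac{10p-12}{3p-2}$, $r=\frac{4m}{3(m-2)}$, apply the Gagliardo--Nirenberg-type interpolation in $q$ to get a uniform $L^r_T(L^m(\mathcal{L}^m))$ bound on $\nabla_q g^n$, and then conclude equi-integrability via Chebyshev with exponent $-(1-\tfrac{2}{m})$. The paper merely writes out the intermediate H\"older step in $x$ more explicitly, but the argument is the same.
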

\begin{proof}
	Taking $m=\frac{10p-12}{3p-2}$ with $m\in(2,\frac{10}{3}]$, we deduce that
	\begin{align}\label{3ineq4}
		~~\|\nabla_qg^n\|_{L^{\frac{4m}{3(m-2)}}_T(L^{m}(\mathcal{L}^{m}))} &\leq C\left(\int_0^T\left(\int_{\mathbb{R}^3} \|\nabla_qg^n\|^{3-\frac{m}{2}}_{\mathcal{L}^2}\|\nabla^2_qg^n\|^{\frac{3m}{2}-3}_{\mathcal{L}^2}dx\right)^{\frac{4}{3(m-2)}}dt\right)^{\frac{3(m-2)}{4m}} \\ \notag
		&\leq C\|\nabla_qg^n\|^{\frac{3}{m}-\frac{1}{2}}_{L^{\infty}_T(L^p(\mathcal{L}^2))}\|\nabla^2_qg^n\|^{\frac{3}{2}-\frac{3}{m}}_{L^2_T(L^2(\mathcal{L}^2))},
	\end{align}
	which implies that $\nabla_qg^n\in L^{m_1}_T(L^{m}(\mathcal{L}^{m}))$ with $m_1=\frac{4m}{3(m-2)}>2$.  By virtue of Chebyshev inequality, one can arrive at
	\begin{align}\label{3ineq5}
		\iint_{\{|\nabla_qg^n|^2 \psi_{\infty}\geq M\}}|\nabla_qg^n|^2 \psi_{\infty}dqdx \leq C_T \|\nabla_qg^n\|^{2-\frac{4}{m}}_{L^2(\mathcal{L}^2)} \|\nabla_qg^n\|^2_{L^m(\mathcal{L}^m)}M^{-1+\frac{2}{m}},
	\end{align}
	which implies that
	\begin{align}\label{3ineq6}
		\int_0^T\iint_{\{|\nabla_qg^n|^2 \psi_{\infty}\geq M\}}|\nabla_qg^n|^2 \psi_{\infty}dqdxdt \rightarrow 0 \ \ as \ \ \ M  \rightarrow \infty.
	\end{align}
	This completes the proof of Proposition \ref{3prop2} .
\end{proof}

\begin{lemm}\label{3lemm1}
Let the conditions in Proposition \ref{3prop2} be fulfilled. Suppose that
\begin{align}\label{3ineq19}
\|\nabla_qg^n\|^2_{\mathcal{L}^2} \rightharpoonup \|\nabla_qg\|^2_{\mathcal{L}^2} + \tilde{\kappa},
\end{align}
and
\begin{align}\label{3ineq20}
~\frac{\|\nabla_qg^n\|^2_{\mathcal{L}^2}}{(1+\delta\| g^n\|^2_{\mathcal{L}^2})^2} \rightharpoonup \frac{\|\nabla_qg\|^2_{\mathcal{L}^2}}{(1+\delta\| g\|^2_{\mathcal{L}^2})^2} + \tilde{\kappa}_{\delta},
\end{align}
with $\tilde{\kappa},\tilde{\kappa}_{\delta}\in L^{\infty}_T(L^1\cap L^{\frac p 2})$ for some $p>2$, then
\begin{align}\label{3ineq21}
\tilde{\kappa}_{\delta} \rightharpoonup \tilde{\kappa} \in L^{\infty}_T(L^1) \ \ as \ \ \ \delta \rightarrow 0.
\end{align}
\end{lemm}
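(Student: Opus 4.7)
The plan is to test the two hypothesised weak convergences against an arbitrary $\varphi\in C_c^{\infty}([0,T)\times\mathbb{R}^3)$, subtract them, and control the resulting correction as $\delta\to 0^+$. Setting $w_n(\delta)=1-(1+\delta\|g^n\|^2_{\mathcal{L}^2})^{-2}$ and $w(\delta)=1-(1+\delta\|g\|^2_{\mathcal{L}^2})^{-2}$, the subtraction of the two identities, paired with $\varphi$, produces
\[
\langle \tilde{\kappa}-\tilde{\kappa}_\delta,\varphi\rangle = \lim_{n\to\infty}\int \|\nabla_q g^n\|^2_{\mathcal{L}^2}\,w_n(\delta)\,\varphi\,dxdt - \int \|\nabla_q g\|^2_{\mathcal{L}^2}\,w(\delta)\,\varphi\,dxdt.
\]
The second term on the right vanishes as $\delta\to 0^+$ by dominated convergence, since $0\leq w(\delta)\to 0$ pointwise while $\|\nabla_q g\|^2_{\mathcal{L}^2}|\varphi|\in L^1([0,T]\times\mathbb{R}^3)$.

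The core of the argument is a uniform-in-$n$ bound on $I_n(\delta):=\int \|\nabla_q g^n\|^2_{\mathcal{L}^2}\,w_n(\delta)\,\varphi\,dxdt$. Fix $\varepsilon\in(0,1)$ and split the domain into $\{\delta\|g^n\|^2_{\mathcal{L}^2}\leq\varepsilon\}$ and its complement. On the small region the elementary bound $w_n(\delta)\leq 3\varepsilon$ together with the a priori estimate $\|\nabla_q g^n\|_{L^2_T L^2(\mathcal{L}^2)}\leq C$ (immediate from the hypothesis of Proposition \ref{3prop2}) yields a contribution of order $C\varepsilon\|\varphi\|_{L^\infty}$. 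On the complement I would exploit the strict integrability $p>2$: the hypothesis of Proposition \ref{3prop2}, combined with a pointwise-in-$x$ application of Lemma \ref{Lemma1} (using $\int g^n\psi_\infty dq=0$), gives $\|g^n\|_{L^\infty_T L^p(\mathcal{L}^2)}+\|\nabla_q g^n\|_{L^\infty_T L^p(\mathcal{L}^2)}\leq C$. Chebyshev then provides the uniform bound $|\{x\in\mathbb{R}^3:\|g^n\|^2_{\mathcal{L}^2}(t,x)>\varepsilon/\delta\}|\leq C(\delta/\varepsilon)^{p/2}$, and Hölder against $\|\nabla_q g^n\|^2_{\mathcal{L}^2}\in L^\infty_T L^{p/2}_x$ produces a contribution of order $C(\delta/\varepsilon)^{(p-2)/2}\|\varphi\|_{L^\infty}$. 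Assembling these pieces,
\[
\sup_n |I_n(\delta)|\leq C\varepsilon+C(\delta/\varepsilon)^{(p-2)/2}.
\]

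For fixed $\varepsilon$ the second summand vanishes as $\delta\to 0^+$, which gives $\limsup_{\delta\to 0^+}\sup_n|I_n(\delta)|\leq C\varepsilon$ for every $\varepsilon>0$, hence $\sup_n|I_n(\delta)|\to 0$. Together with $I(\delta)\to 0$ this forces $\langle \tilde{\kappa}-\tilde{\kappa}_\delta,\varphi\rangle\to 0$ for every admissible test function, which is precisely the claimed weak convergence in $L^\infty_T(L^1)$. The main obstacle is the second-region estimate: without the strict bound $p>2$, the exponent $(p-2)/2$ degenerates to zero and $(\delta/\varepsilon)^{(p-2)/2}$ fails to decay, so one could not absorb the mass of $\|\nabla_q g^n\|^2_{\mathcal{L}^2}$ sitting on the set where $\|g^n\|^2_{\mathcal{L}^2}$ is large. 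This is exactly where the higher-integrability hypothesis $\nabla_q g_0\in(L^2\cap L^p)(\mathcal{L}^2)$ with $p>2$ built into Theorem \ref{th2} is used.
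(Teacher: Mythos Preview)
Your proof is correct and follows the same overall strategy as the paper: reduce to showing that $\|\nabla_q g^n\|^2_{\mathcal{L}^2}\,w_n(\delta)\to 0$ in $L^1$ uniformly in $n$ as $\delta\to 0$, then split the integral into two regions and exploit the strict integrability $p>2$ on one of them via Chebyshev and H\"older. The only variation is the choice of splitting variable. The paper cuts at a threshold $M$ on $\|\nabla_q g^n\|^2_{\mathcal{L}^2}$: on $\{\|\nabla_q g^n\|^2_{\mathcal{L}^2}>M\}$ it uses the $L^p$ bound on $\nabla_q g^n$ directly, while on the complement it bounds the weight by $w_n(\delta)\leq 3\delta\|g^n\|^2_{\mathcal{L}^2}$ and uses only $g^n\in L^{\infty}_T L^2(\mathcal{L}^2)$. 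You instead cut at a threshold $\varepsilon$ on $\delta\|g^n\|^2_{\mathcal{L}^2}$ and, on the exceptional set, transfer the $L^p$ bound from $\nabla_q g^n$ to $g^n$ via the pointwise Poincar\'e inequality before applying Chebyshev. Both routes arrive at the same conclusion with essentially the same ingredients; the paper's version is marginally more economical in that it does not need to invoke the Poincar\'e inequality at this step.
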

\begin{proof}
It's sufficient to prove that $\left|\frac{\|\nabla_qg^n\|^2_{\mathcal{L}^2}}{\left(1+\delta\| g^n\|^2_{\mathcal{L}^2}\right)^2}-\|\nabla_qg^n\|^2_{\mathcal{L}^2}\right|\rightarrow0\in L^{\infty}_T(L^1)$ as $\delta\rightarrow0$. Firstly, we deduce that
\begin{align}\label{3ineq22}
\left|\frac{\|\nabla_qg^n\|^2_{\mathcal{L}^2}}{\left(1+\delta\| g^n\|^2_{\mathcal{L}^2}\right)^2}-\|\nabla_qg^n\|^2_{\mathcal{L}^2}\right|&=\frac{\|\nabla_qg^n\|^2_{\mathcal{L}^2}}{\left(1+\delta\| g^n\|^2_{\mathcal{L}^2}\right)^2}\left(\delta^2\| g^n\|^4_{\mathcal{L}^2}+2\delta\| g^n\|^2_{\mathcal{L}^2}\right)\\ \notag
& \leq \|\nabla_qg^n\|^2_{\mathcal{L}^2}1_{\{\|\nabla_qg^n\|^2_{\mathcal{L}^2}>M\}} +3M\delta\| g^n\|^2_{L^2(\mathcal{L}^2)}.
\end{align}
Applying Chebyshev inequality, one can arrive at
\begin{align}\label{3ineq23}
\int_{\mathbb{R}^3}\|\nabla_qg^n\|^2_{\mathcal{L}^2}1_{\{\|\nabla_qg^n\|^2_{\mathcal{L}^2}>M\}}dx &\leq \|\nabla_qg^n\|^2_{L^p(\mathcal{L}^2)}\|\nabla_qg^n\|^{2-\frac{4}{p}}_{L^2(\mathcal{L}^2)}M^{\frac{2}{p}-1}.~~
\end{align}
This together with \eqref{3ineq22} implies that
\begin{align}\label{3ineq24}
\sup_{t\in[0,T]}\int_{\mathbb{R}^3}\left|\frac{\|\nabla_qg^n\|^2_{\mathcal{L}^2}}{\left(1+\delta\| g^n\|^2_{\mathcal{L}^2}\right)^2}-\|\nabla_qg^n\|^2_{\mathcal{L}^2}\right|dx\rightarrow0 \ \ as \ \ \ \delta \rightarrow0.
\end{align}
We thus complete the proof of Lemma \ref{3lemm1} .
\end{proof}
\textbf{The proof of Theorem \ref{th2} : } \\
According to Propositions \ref{prop1}, \ref{prop2}, \ref{Conservation}, \ref{3prop1} and \ref{3prop2}, we obtain the following defect measures:
\begin{align}\label{4ineq1}
\left\{
\begin{array}{ll}
|\nabla\left(u^n_3-u_3\right)|^2 \rightharpoonup \mu \in \mathcal{M}(x), \\
|g^n-g|^2 \rightharpoonup \eta \in L^{\infty}_T\left((L^1\cap L^{\frac{p}{2}})(\mathcal{L}^1)\right),\\
|\tau^n-\tau|^2\rightharpoonup \alpha \in L^{\infty}_T\left(L^1\cap L^{\frac{p}{2}}\right) $ with $ \alpha \leq \int_{\mathbb{R}^d}\eta\psi_{\infty}dq,  \\
|\nabla_q\left(g^n-g\right)|^2 \rightharpoonup \kappa \in L^{\frac{2m}{3(m-2)}}_T\left((L^1\cap L^{\frac{m}{2}})(\mathcal{L}^{\frac{m}{2}})\right) $ with $ m=\frac{10p-12}{3p-2},\\
\langle q\rangle g^n\nabla u^n_3 \rightharpoonup \langle q\rangle g^n\nabla u_3 + \beta \in L_T^2\left(L^1(\mathcal{L}^2)\right) $ with $ |\beta|\leq \langle q\rangle\sqrt{\mu}\sqrt{\eta}.
\end{array}
\right.
\end{align}
Extracting subsequences if necessary, for each $\delta\in(0,1)$, we assume that
\begin{align}\label{4ineq2}
\frac{\|g^n\|^2_{\mathcal{L}^2}}{1+\delta\|g^n\|^2_{\mathcal{L}^2}} \rightharpoonup \frac{\|g\|^2_{\mathcal{L}^2}}{1+\delta\|g\|^2_{\mathcal{L}^2}} +  \tilde{\eta}_{\delta},~~0\leq \tilde{\eta}_{\delta} \leq \frac{1}{\delta},~~~~~~~~~
\end{align}
\begin{align}\label{4ineq3}
\frac{\|\nabla_qg^n\|^2_{\mathcal{L}^2}}{\left(1+\delta\|g^n\|^2_{\mathcal{L}^2}\right)^2} \rightharpoonup \frac{\|\nabla_qg\|^2_{\mathcal{L}^2}}{\left(1+\delta\|g\|^2_{\mathcal{L}^2}\right)^2} + \tilde{\kappa}_{\delta},~~\tilde{\kappa}_{\delta}\in L^{\frac{2m}{3(m-2)}}_T(L^{\frac{m}{2}}),
\end{align}
\begin{align}\label{4ineq4}
~~~~~~\|g^n\|^2_{\mathcal{L}^2} + 1 \rightharpoonup N^2 \ with \ N=\sqrt{\|g\|^2_{\mathcal{L}^2} + \int_{\mathbb{R}^3}\eta\psi_{\infty}dq + 1}.
\end{align}
Taking $\mathcal{L}^2$ inner product with $g^n\psi_{\infty}$ to system $\eqref{eq0}_2$, we infer that
\begin{align}\label{4ineq5}
\partial_t \|g^n\|^2_{\mathcal{L}^2} + {\rm div}~\left(u^n\|g^n\|^2_{\mathcal{L}^2}\right)+2\|\nabla_qg^n\|^2_{\mathcal{L}^2} = 0.~
\end{align}
Denote that $\tilde{\eta}=\int_{\mathbb{R}^3}\eta\psi_{\infty}dq$ and $\tilde{\kappa}=\int_{\mathbb{R}^3}\kappa\psi_{\infty}dq$. Passing to the limit in \eqref{4ineq5} and using the convergence properties in \eqref{4ineq1} and \eqref{4ineq4}, one can arrive at
\begin{align}\label{4ineq6}
~~~~\partial_t N + {\rm div}~(uN)+\frac{1}{N}\left(\int_{\mathbb{R}^3}|\nabla_q g|^2\psi_{\infty} dq+\tilde{\kappa}\right) = 0.
\end{align}
Multiplying $\left(1+\delta\|g^n\|^2_{\mathcal{L}^2}\right)^{-2}$ to \eqref{4ineq5}, we infer that
\begin{align}\label{4ineq7}
\partial_t \frac{\|g^n\|^2_{\mathcal{L}^2}}{1+\delta\|g^n\|^2_{\mathcal{L}^2}} + {\rm div}~\left(u\frac{\|g^n\|^2_{\mathcal{L}^2}}{1+\delta\|g^n\|^2_{\mathcal{L}^2}}\right)+\frac{2\|\nabla_qg^n\|^2_{\mathcal{L}^2}}{\left(1+\delta\|g^n\|^2_{\mathcal{L}^2}\right)^2} = 0.~~~~~~~
\end{align}
Passing to the limit in \eqref{4ineq7} and using \eqref{4ineq2}, \eqref{4ineq3}, we deduce that
\begin{align}\label{4ineq8}
\partial_t \left(\frac{\|g\|^2_{\mathcal{L}^2}}{1+\delta\|g\|^2_{\mathcal{L}^2}} +  \tilde{\eta}_{\delta}\right) + {\rm div}~\left(u\left(\frac{\|g\|^2_{\mathcal{L}^2}}{1+\delta\|g\|^2_{\mathcal{L}^2}} +  \tilde{\eta}_{\delta}\right)\right)+2\frac{\|\nabla_qg\|^2_{\mathcal{L}^2}}{\left(1+\delta\|g\|^2_{\mathcal{L}^2}\right)^2} +  2\tilde{\kappa}_{\delta} = 0.~~~~~
\end{align}
According to Proposition \ref{renormalized2} , we obtain
\begin{align}\label{4ineq9}
	\partial_t\frac{\|g\|^2_{\mathcal{L}^2}}{1+\delta\|g\|^2_{\mathcal{L}^2}} &+ {\rm div}~\left(u\frac{\|g\|^2_{\mathcal{L}^2}}{1+\delta\|g\|^2_{\mathcal{L}^2}} \right)+2\frac{\|\nabla_qg\|^2_{\mathcal{L}^2}}{\left(1+\delta\|g\|^2_{\mathcal{L}^2}\right)^2}  \\ \notag
	&=\frac{1}{\left(1+\delta\|g\|^2_{\mathcal{L}^2}\right)^2}\int_{\mathbb{R}^3}\left(\beta_{ji}-\beta_{ij}\right)\frac{q_j}{\langle q\rangle}\nabla^i_qg\psi_{\infty}dq.~~~~~
\end{align}
Therefore, \eqref{4ineq8} minus \eqref{4ineq9} leads to
\begin{align}\label{4ineq10}
\partial_t \tilde{\eta}_{\delta} + {\rm div}~(u\tilde{\eta}_{\delta}) + 2\tilde{\kappa}_{\delta} = \frac{1}{(1+\delta\|g\|^2_{\mathcal{L}^2})^2}\int_{\mathbb{R}^3}(\beta_{ji}-\beta_{ij})\frac{q_j}{\langle q\rangle}\nabla^i_qg\psi_{\infty}dq.
\end{align}
Combining \eqref{4ineq6} and \eqref{4ineq10}, we conclude that
\begin{align}\label{4ineq11}
	\partial_t \frac{\tilde{\eta}_{\delta}}{N^2} + {\rm div}~(u\frac{\tilde{\eta}_{\delta}}{N^2}) + \frac{2}{N^2}\tilde{\kappa}_{\delta}-\frac{2\tilde{\eta}_{\delta}}{N^4}\tilde{\kappa} &= \frac{(1+\delta\|g\|^2_{\mathcal{L}^2})^{-2}}{N^{2}}\int_{\mathbb{R}^3}(\beta_{ji}-\beta_{ij})\frac{q_j}{\langle q\rangle}\nabla^i_qg\psi_{\infty} dq ~~~\\ \notag
	& ~~~~+\frac{2\tilde{\eta}_{\delta}}{N^4}\int_{\mathbb{R}^3}|\nabla_qg|^2\psi_{\infty} dq.
\end{align}
According to Lemma \ref{3lemm1} , passing $\delta$ to $0$ leads to
\begin{align}\label{4ineq12}
	\partial_t \frac{\tilde{\eta}}{N^2} + {\rm div}~(u\frac{\tilde{\eta}}{N^2}) + 2\left(\frac{1}{N^2}-\frac{\tilde{\eta}}{N^4}\right)\tilde{\kappa}&= \frac{1}{N^2}\int_{\mathbb{R}^3}(\beta_{ji}-\beta_{ij})\frac{q_j}{ \langle q\rangle}\nabla^i_qg\psi_{\infty}  dq ~~~~~~~~~~~~~~~ \\ \notag
	& ~~~~+\frac{2\tilde{\eta}}{N^4}\int_{\mathbb{R}^3}|\nabla_qg|^2\psi_{\infty} dq.
\end{align}
Notice that $\frac{1}{N^2}-\frac{\tilde{\eta}}{N^4} \geq 0$ and $0 \leq \frac{1}{N} \leq 1$. According to \eqref{3ineq12} and Proposition \ref{prop2}, we infer that
\begin{align}\label{4ineq13}
	~~\partial_t \frac{\tilde{\eta}}{N^2} + {\rm div}~(u\frac{\tilde{\eta}}{N^2}) \leq  C\left(\|\langle q\rangle\nabla_qg\|_{\mathcal{L}^2}+\frac {\|\langle q\rangle\nabla_qg\|^2_{\mathcal{L}^2}} {N^2}\right)\frac{\tilde{\eta}}{N^2}\in L^1_T(L^1_{\rm loc}).~~~~~~~~~~~~~~~~~~~~~~~~
\end{align}
According to Lemmas \ref{Lions} and \ref{Mild} with $\frac{\tilde{\eta}}{N^2} \in L^{\infty}_T(L^{\infty})$, we conclude for any $t\geq 0$ that
\begin{align}\label{4ineq14}
	~~~~~~~~~~~\frac{\tilde{\eta}}{N^2}(X(t,x)) \leq \frac{\tilde{\eta}_0}{\|g_0\|^2_{\mathcal{L}^2} + \tilde{\eta}_0 + 1}e^{Ct+C\int_0^t\|\langle q\rangle\nabla_qg_0\|^2_{\mathcal{L}^2}ds} \ a.e.~x\in\mathbb{R}^3 \ ,~~~~~~~~~~~~~~~~~~~
\end{align}
where $X$ is the unique $a.e.~x\in\mathbb{R}^3$ flow such that
\begin{align}
	\dot{X}=u(t,X), \ \ \ \ X(0,x)=x.~~~~
\end{align}
For each $t\in[0,T]$, it follows from Proposition \ref{prop2} and Minkowski's inequality that
\begin{align}\label{4ineq15}
	~~\|\int_0^t\|\langle q\rangle\nabla_qg\|^2_{\mathcal{L}^2}ds\|_{L^1} \leq \int_0^t\|\langle q\rangle\nabla_qg\|^2_{L^2(\mathcal{L}^2)}ds<\infty,
\end{align}
which implies that $e^{C\int_0^t\|\langle q\rangle\nabla_qg\|^2_{\mathcal{L}^2}ds}<\infty \ a.e.~x\in\mathbb{R}^3 $ and thus $\frac{\tilde{\eta}}{N^2}(X(t,x))=0$ $a.e.~x\in\mathbb{R}^3$ with $\tilde{\eta}_0=0$. Using the invariance of Lebesgue measure, we deduce that $\eta=0$ $a.e.~(x,q)\in\mathbb{R}^3\times\mathbb{R}^3$ for all $t \geq 0$ and thus complete the proof of Theorem \ref{th2}.
\hfill$\Box$
\begin{rema}
	In Theorems \ref{th1} and \ref{th2}, we obtain global existence of system \eqref{eq0} with additional energy estimates. Global existence of system \eqref{eq0} with standard energy estimations in \eqref{1ineq1} is an interesting problem. However, the technique in this paper fails to solve this problem and we would get further research in the furture.
\end{rema}
\section{Optimal decay rate}
\textbf{The proof of Theorem \ref{th3} : } \\
By density argument, we assume that $(u,g)$ is smooth solution of system $\eqref{eq0}$. We first prove the exponential decay rate of $g$ in $L^2(\mathcal{L}^2)$. Taking $L^2(\mathcal{L}^2)$ inner product with $g$ to system $(\ref{eq0})_2$, we infer
\begin{align}\label{5ineq1}
	\frac{d}{dt}\|g\|^2_{L^2(\mathcal{L}^2)} +  2\|\nabla_qg\|^2_{L^2(\mathcal{L}^2)} = 0.
\end{align}
According to Lemma \ref{Lemma1}, we have
\begin{align}\label{5ineq2}
	c\|g\|^2_{\mathcal{L}^2} \leq \|\nabla_qg\|^2_{\mathcal{L}^2},~~~~~
\end{align}
which implies that
\begin{align}\label{5ineq2'}
\|g\|^2_{L^2(\mathcal{L}^2)} \leq \|g_0\|^2_{L^2(\mathcal{L}^2)}e^{-2ct}.
\end{align}
Applying H\"{o}lder inequality, we obtain
\begin{align}\label{5ineq2''}
	~~~~~~~~~~~~~\|\tau\|^2_{L^2} \leq C\|g\|^2_{L^2(\mathcal{L}^2)}\leq Ce^{-2ct}.
\end{align}

Then we prove the optimal $L^2$ decay rate for velocity $u$. The proof is divided into three steps. To start with, we get initial time decay rate $\ln^{-l}(e+t)$ for $u$ in $L^2$ for any $l\in N^{+}$ by the Fourier splitting method. Then, by virtue of the time weighted energy estimate and the logarithmic decay rate, we improve the time decay rate to $(1+t)^{-\frac{1}{2}}$. Finally, we establish the lower bound of long time decay rate in $L^2$ for velocity $u$, which implies that the decay rate we obtained is optimal. \\
\textbf{Step 1 : }  Taking $L^2$ energy estimate to $\eqref{eq0}_1$, we deduce that
\begin{align}\label{5ineq3}
	\frac d {dt} \|u\|^2_{L^2}+\|\nabla u\|^2_{L^2}
	\leq  C\|\tau\|^2_{L^2}.
\end{align}
Define $S_0(t)=\{\xi:|\xi|^2\leq C_d\frac {f'(t)} {f(t)}\}$ with $f(t)=\ln^3(e+t)$ and the constant $C_d$ large enough. According to Schonbek's strategy, we have
\begin{align}\label{5ineq3'}
C_d\frac {f'(t)} {f(t)} \int_{(S_0(t))^c}|\hat{u}(\xi)|^2 d\xi\leq\|\nabla u\|^2_{L^2}.~~~~~
\end{align}
	We infer from \eqref{5ineq2''} and \eqref{5ineq3'} that
	\begin{align}\label{5ineq3''}
		\frac d {dt} \|u\|^2_{L^2}+C_d\frac {f'(t)} {f(t)}\| u\|^2_{L^2}
		\leq C_d\frac {f'(t)} {f(t)}\int_{S_0(t)}|\hat{u}(\xi)|^2 d\xi + Ce^{-2ct}.~~~~
	\end{align}
	Taking Fourier transform with respect to $x$ in system $(\ref{eq0})$, one can arrive at
	\begin{align}\label{5ineq4}
		\left\{
		\begin{array}{ll}
			\hat{u}_t+\mathscr{F}\left(u\cdot\nabla u\right)+|\xi|^2 \hat{u}+i\xi\hat{P}=i\xi\cdot\hat{\tau},  \\[1ex]
			\hat{g}_t+\mathscr{F}\left(u\cdot\nabla g\right)-\mathcal{L}(\hat{g})={\rm div}_{q}\left(-\mathscr{F}\left(\Omega\cdot{q}g\psi_{\infty}\right)\right), ~~\\[1ex]
			i\xi\cdot\bar{\hat{u}}=-\overline{i\xi\cdot\hat{u}}=0.
		\end{array}
		\right.
	\end{align}
	Multiplying $\bar{\hat{u}}$ to system $(\ref{5ineq4})_1$, we get
	\begin{align}\label{5ineq5}
    ~\partial_t|\hat{u}|^2 \leq |\hat{\tau}|^2 + C\left|\mathscr{F}(u\otimes u)\right|^2.
	\end{align}
    Taking $\mathcal{L}^2$ inner product to system  $(\ref{5ineq4})_2$ with $\hat{g}$, we obtain
    \begin{align}\label{5ineq6}
    	\partial_t\|\hat{g}\|^2_{\mathcal{L}^2} + 2\|\nabla_q\hat{g}\|^2_{\mathcal{L}^2} \leq C\int_{\mathbb{R}^d}\psi_\infty\left|\mathscr{F}(u\cdot\nabla g)\right|^2 dq + C\int_{\mathbb{R}^d}\psi_\infty\left|\mathscr{F}(\nabla u\cdot{q}g\psi_{\infty})\right|^2 dq.~~~~
    \end{align}
Applying Lemma \ref{Lemma1}, we have
\begin{align}\label{5ineq7}
	|\hat{\tau}|^2 &= \left(\int_{\mathbb{R}^d} q \otimes\nabla_q\mathcal{U} \hat{g} \psi_{\infty} dq\right)^2 \leq C\|\nabla_q\hat{g}\|^2_{\mathcal{L}^2}.
\end{align}
Adding \eqref{5ineq5} to $\lambda\times\eqref{5ineq6}$ with $\lambda$ large enough and integrating $\xi$ over $S_0(t)$, we deduce that
	\begin{align}\label{5ineq8}
		\int_{S_0(t)}|\hat{u}(t,\xi)|^2+\lambda\|\hat{g}\|^2_{\mathcal{L}^2}d\xi
		\leq \int_{S_0(t)} |\hat{u}_0|^2
		&+\lambda\|\hat{g}_0\|^2_{\mathcal{L}^2}d\xi+C\int_{S_0(t)}\int_{0}^{t}|\mathscr{F}(u\otimes u)|^2 ds'd\xi  \\ \notag
		+\lambda\int_{S_0(t)}\int_{0}^{t}\int_{\mathbb{R}^d}\psi_\infty|\mathscr{F}(u\cdot\nabla\hat{g})|^2 dqds'd\xi&+\lambda\int_{S_0(t)}\int_{0}^{t}\int_{\mathbb{R}^d}\psi_\infty|\mathscr{F}(\nabla u\cdot{q}\hat{g}\psi_{\infty})|^2 dqds'd\xi.
	\end{align}
	Under the assumptions in Theorem \ref{th3}, we get
	\begin{align}\label{5ineq9}
		~~~~~\int_{S_0(t)} |\hat{u}_0|^2
		+\|\hat{g}_0\|^2_{\mathcal{L}^2}d\xi
		&\leq\int_{S_0(t)} d\xi\cdot\||\hat{u}_0|^2
		+\|\hat{g}\|^2_{\mathcal{L}^2}\|_{L^{\infty}(S(t))} \\ \notag
		&\leq C\left(\frac {f'(t)} {f(t)}\right)^{\frac d 2}\left(\|u_0\|^2_{L^1}+\|\hat{g}_0\|^2_{L^1(\mathcal{L}^2)}\right).
	\end{align}
	According to Minkowski's inequality and \eqref{1ineq1}, we have
	\begin{align}\label{5ineq10}
		\int_{S_0(t)}\int_{0}^{t}|\mathscr{F}(u\otimes u)|^2 ds'd\xi
		&=\int_{0}^{t}\int_{S_0(t)}|\mathscr{F}(u\otimes u)|^2 d\xi ds'  \\ \notag
		&\leq C\int_{S_0(t)}d\xi \int_{0}^{t}\||\mathscr{F}(u\otimes u)|^2\|_{L^{\infty}}ds' ~~~\\ \notag
		&\leq C\ln^{-1}(e+t).
	\end{align}
	Using ${\rm div}~u=0$ and \eqref{1ineq1}, we obtain
		\begin{align}\label{5ineq11}
		\int_{S_0(t)}\int_{0}^{t}\int_{\mathbb{R}^d}\psi_\infty|\mathscr{F}(u\cdot\nabla g)|^2 dqds'd\xi
		&\leq C\int_{S_0(t)}|\xi|^2d\xi \int_{0}^{t}\int_{\mathbb{R}^d}\|\psi_\infty|\mathscr{F}(u g)|^2\|_{L^{\infty}}dqds' \\ \notag
		&\leq C\left(\frac {f'(t)} {f(t)}\right)^{\frac d 2+1} \int_{0}^{t}\|u\|^2_{L^{2}}\|g\|^2_{L^{2}(\mathcal{L}^{2})}ds' \\ \notag
		&\leq C\left(\frac {f'(t)} {f(t)}\right)^{\frac d 2+1}.
	\end{align}
Applying Proposition \ref{prop2} and Lemma \ref{Lemma1}, we infer that
	\begin{align}\label{5ineq12}
		~\int_{S_0(t)}\int_{0}^{t}\int_{\mathbb{R}^d}\psi_\infty|\mathscr{F}(\nabla u\cdot{q}g)|^2 dqds'd\xi
		&\leq C\int_{S_0(t)}d\xi \int_{0}^{t}\int_{B}\|\psi_\infty|\mathscr{F}(\nabla u\cdot{q}g)|^2\|_{L^{\infty}}dqds' ~~\\ \notag
		&\leq C\left(\frac {f'(t)} {f(t)}\right)^{\frac d 2} \int_{0}^{t}\|\nabla u\|^2_{L^{2}}\|\langle q\rangle g\|^2_{L^{2}(\mathcal{L}^{2})}ds' \\ \notag
		&\leq C\left(\frac {f'(t)} {f(t)}\right)^{\frac d 2}.
	\end{align}
	Combining the estimates for \eqref{5ineq8}, we conclude that
	\begin{align}\label{5ineq13}
		\int_{S_0(t)}|\hat{u}(t,\xi)|^2 d\xi\leq C\ln^{-1}(e+t).~~~~~
	\end{align}
	According to \eqref{5ineq3''} and \eqref{5ineq13}, we deduce that
	\begin{align}\label{5ineq14}
		\frac d {dt} \|u\|^2_{L^2}+C_d\frac {f'(t)} {f(t)}\| u\|^2_{L^2}
		\leq CC_d\frac {f'(t)} {f(t)} \ln^{-1}(e+t).~~~~~
	\end{align}
	By performing a routine procedure, one can arrive at
	\begin{align}\label{5ineq15}
		~~~~~~~~~~\|u\|^2_{L^2}\leq  C\ln^{-1}(e+t).
	\end{align}
	Using the initial decay \eqref{5ineq15}, we improve the time decay rate in $L^2$ by using bootstrap argument.
	\begin{align}\label{5ineq16}
		\int_{S(t)}\int_{0}^{t}|\mathscr{F}(u\otimes u)|^2 dsd\xi
		&\leq C\left(\frac {f'(t)} {f(t)}\right)^{\frac d 2} \int_{0}^{t}\|u\|^4_{L^{2}}ds' \\ \notag
		&\leq C\left(\frac {f'(t)} {f(t)}\right)^{\frac d 2} \int_{0}^{t}\ln^{-2}(e+s')ds' \\ \notag
		&\leq C\ln^{-3}(e+t).
	\end{align}
	Then the proof of \eqref{5ineq14} implies that
	\begin{align}\label{5ineq17}
		\|u\|^2_{L^2} \leq  C\ln^{-3}(e+t).
	\end{align}
\textbf{Step 2 :} Define $S(t)=\{\xi:|\xi|^2\leq C_d(1+t)^{-1}\}$, which will be useful to prove polynomial decay. By Schonbek's strategy and \eqref{5ineq2''}, we infer that
	\begin{align}\label{5ineq18}
		\frac d {dt} \|u\|^2_{L^2}+\frac {C_d} {1+t}\| u\|^2_{L^2}
		\leq \frac { C_d} {1+t}\int_{S(t)}|\hat{u}(\xi)|^2 d\xi + Ce^{-2at}.
	\end{align}
By performing a routine procedure as \eqref{5ineq8}, one can arrive at
	\begin{align}\label{5ineq19}
		\int_{S(t)}|\hat{u}(t,\xi)|^2+\lambda\|\hat{g}\|^2_{\mathcal{L}^2}d\xi
		\leq \int_{S(t)} |\hat{u}_0|^2
		&+\lambda\|\hat{g}_0\|^2_{\mathcal{L}^2}d\xi+C\int_{S(t)}\int_{0}^{t}|\mathscr{F}(u\otimes u)|^2 ds'd\xi  \\ \notag
		+\lambda\int_{S(t)}\int_{0}^{t}\int_{\mathbb{R}^d}\psi_\infty|\mathscr{F}(u\cdot\nabla g)|^2 dqds'd\xi&+\lambda\int_{S(t)}\int_{0}^{t}\int_{\mathbb{R}^d}\psi_\infty|\mathscr{F}(\nabla u\cdot{q}g)|^2 dqds'd\xi.
	\end{align}
	Under the additional assumption in Theorem \ref{th3}, we deduce that
	\begin{align}\label{5ineq20}
		~~\int_{S(t)} |\hat{u}_0|^2
		+\lambda\|\hat{g}_0\|^2_{\mathcal{L}^2}d\xi
		&\leq\int_{S(t)} d\xi\cdot\left\||\hat{u}_0|^2
		+\lambda\|\hat{g}_0\|^2_{\mathcal{L}^2}\right\|_{L^{\infty}(S(t))} \\ \notag
		&\leq C(1+t)^{-\frac d 2}\left(\|u_0\|^2_{L^1}+\lambda\|g_0\|^2_{L^1(\mathcal{L}^2)}\right),
	\end{align}
	and
	\begin{align}\label{5ineq21}
		\int_{S(t)}\int_{0}^{t}|\mathscr{F}(u\otimes u)|^2 dsd\xi
		\leq C(1+t)^{-\frac d 2} \int_{0}^{t}\|u\|^4_{L^{2}}ds'.
	\end{align}
	Using ${\rm div}~u=0$ and \eqref{1ineq1}, we have
	\begin{align}\label{5ineq22}
		\int_{S(t)}\int_{0}^{t}\int_{\mathbb{R}^d}\psi_\infty|\mathscr{F}(u\cdot\nabla g)|^2 dqds'd\xi
		&\leq C(1+t)^{-\frac d 2-1} \int_{0}^{t}\|u\|^2_{L^{2}}\|g\|^2_{L^{2}(\mathcal{L}^{2})}ds' \\ \notag
		&\leq C(1+t)^{-\frac d 2-1}.
	\end{align}
Applying Proposition \ref{prop2} and Lemma \ref{Lemma1}, we obtain
	\begin{align}\label{5ineq23}
		~\int_{S(t)}\int_{0}^{t}\int_{\mathbb{R}^d}\psi_\infty|\mathscr{F}(\nabla u\cdot{q}g)|^2 dqds'd\xi
		&\leq C(1+t)^{-\frac d 2} \int_{0}^{t}\|\nabla u\|^2_{L^{2}}\|\langle q\rangle g\|^2_{L^{2}(\mathcal{L}^{2})}ds'~~~~~~~~ \\ \notag
		&\leq C(1+t)^{-\frac d 2}.
	\end{align}
	Combining the estimates for \eqref{5ineq19}, we conclude that
	\begin{align}\label{5ineq24}
		\int_{S(t)}|\hat{u}(t,\xi)|^2 d\xi\leq C\left((1+t)^{-\frac d 2}+(1+t)^{-\frac d 2} \int_{0}^{t}\|u\|^4_{L^{2}}ds'\right).
	\end{align}
	According to \eqref{5ineq18} and \eqref{5ineq24}, we deduce that
	\begin{align}\label{5ineq25}
		\frac d {dt} \|u\|^2_{L^2} + \frac {C_d} {1+t}\| u\|^2_{L^2}
		\leq CC_d (1+t)^{-\frac d 2-1} \left(1+\int_{0}^{t}\|u\|^4_{L^{2}}ds'\right).
	\end{align}
 Multiplying $(1+t)^{\frac d 2+1}$ to \eqref{5ineq25} and integrating over $[0,t]$, one can arrive at
	\begin{align}\label{5ineq26}
		(1+t)^{\frac d 2+1}\|u\|^2_{L^2}&\leq Ct+C\int_{0}^{t}\int_{0}^{s'}\|u\|^4_{L^{2}}ds''ds'.~~~~
	\end{align}
	Define $M(t)=\mathop{\sup}\limits_{ s'\in[0,t]} (1+s')^{\frac d 2} \|u\|^2_{L^2}(s')$. Using \eqref{5ineq17} and \eqref{5ineq26}, we deduce that
	\begin{align}\label{5ineq27}
		M(t) \leq C+\int_{0}^{t}M(s')(1+s')^{-\frac d 2}\ln^{-3}(e+s')ds'.~~
	\end{align}
	Applying Gronwall's inequality, then we get $M(t)\leq C$ for any $t>0$,
	which implies that
	\begin{align}\label{5ineq28}
		\|u\|_{L^2}\leq  C(1+t)^{-\frac d 4}.~~~~~~~~~~~~
	\end{align}
\textbf{Step 3 :} We end up with establishing the lower bound of $L^2$ decay rate. Taking Leray projector $\mathbb{P}$ and Fourier transformation with respect to $x$ in system $(\ref{eq0})_1$, we infer that
	\begin{align}\label{5ineq29}
	~~\hat{u}_t + |\xi|^2\hat{u} = i\xi\widehat{\mathbb{P}\tau} - \widehat{\mathbb{P}(u\cdot\nabla u)}.
\end{align}
Integrating over $[0,t]$ with respect to $s$, one can arrive at
	\begin{align}\label{5ineq31}
	\hat{u}  =e^{-|\xi|^2t}\hat{u}_0 + \int_0^t e^{-|\xi|^2(t-s)}i\xi\big[\widehat{\mathbb{P}\tau} - \widehat{\mathbb{P}(u\otimes u)}\big]ds.
\end{align}
Under conditions of Theorem \ref{th1} that $\int_{\mathbb{R}^d}u_0dx \neq 0$, we can choose a ball $B$ containing the origin such that
$\displaystyle\inf_{\xi\in B}\hat{u}_0 \geq c_0$ for some positive constant $c_0$. Denote that $d_0=cc^2_0$ for some $c$ small enough. According to Minkowski inequality, we deduce that
	\begin{align}\label{5ineq32}
	\left(\int_{B}|\hat{u}|^2d\xi\right)^{\frac{1}{2}} &\geq \left(\int_{B}e^{-|\xi|^2t}|\hat{u}_0|^2d\xi\right)^{\frac{1}{2}} - \int_0^t \|e^{-|\xi|^2(t-s)}i\xi\big[\widehat{\mathbb{P}\tau} - \widehat{\mathbb{P}(u\otimes u)}\big]\|_{L^2(B)}ds \\ \notag
	& \geq \left(\int_{B}e^{-|\xi|^2t}|\hat{u}_0|^2d\xi\right)^{\frac{1}{2}} - \int_0^t \|e^{-|\xi|^2(t-s)}i\xi\|_{L^{\frac{2d}{d-2}}}\|\widehat{\mathbb{P}\tau} - \widehat{\mathbb{P}(u\otimes u)}\|_{L^d}ds \\ \notag
	& \geq d_0(1+t)^{-\frac{d}{4}} - \int_0^t (1+t-s)^{-\frac{d}{4}}\left(\|\tau\|_{L^{\frac{d}{d-1}}} +\|u\otimes u\|_{L^{\frac{d}{d-1}}}\right)ds.
\end{align}
Denote that
$$
  E^{\alpha,\beta} = \|\left(u_0,\|g_0\|_{\mathcal{L}^2}\right)\|^{\alpha}_{L^1\cap L^2}\|\left(u_0,\|g_0\|_{\mathcal{L}^2}\right)\|^{\beta}_{L^2},~~~~~~~~~~~~~
$$
and
$$
B_d = \int_0^t (1+t-s)^{-\frac{d}{4}}\left(\|\tau\|_{L^{\frac{d}{d-1}}} +\|u\otimes u\|_{L^{\frac{d}{d-1}}}\right)ds.
$$
Under conditions of Theorem \ref{th3}, we can take  $\|(u_0,\|g_0\|_{\mathcal{L}^2})\|_{L^2}$ small enough such that
$$
E^{0,1}+E^{\frac{1}{3},\frac{2}{3}}+E^{1,1} +E^{\frac{11}{6},\frac{1}{6}} \leq \frac{d_0}{2C}.~~~~~~
$$
For $d=3$, we infer from \eqref{5ineq28}, Lemma \ref{Ldecay} and Proposition \ref{prop1} that
	\begin{align}\label{5ineq33}
~~~~~~~~~~~~~	B_3
	&\leq \int_0^t (1+t-s)^{-\frac{3}{4}}\left(\|\tau\|^{\frac{1}{3}}_{L^{1}}\|\tau\|^{\frac{2}{3}}_{L^{2}} +\|u\|_{L^{2}}\|\nabla u\|_{L^{2}}\right)ds \\ \notag
	& \leq CE^{\frac{1}{3},\frac{2}{3}}(1+t)^{-\frac{3}{4}} + CE^{1,1}\big[\int_0^t (1+t-s)^{-\frac{3}{2}}(1+t)^{-\frac{3}{2}}ds\big]^{\frac{1}{2}} \\ \notag
	& \leq  \frac{d_0}{2}(1+t)^{-\frac{d}{4}}.
\end{align}
Consider the critical case $d=2$, we need more integrability in time for $\|\nabla u\|_{L^2}$. Let's recall the $L^2$ energy estimate as follows,
	\begin{align}\label{5ineq34}
	~\frac{d}{dt}\|u\|^2_{L^2} + \|\nabla u\|^2_{L^2} \leq C\|\tau\|^2_{L^2}.
\end{align}
Multiplying $(1+t)^{\frac{1}{2}}$ to \eqref{5ineq34}, we obtain
	\begin{align}\label{5ineq35}
	\frac{d}{dt}(1+t)^{\frac{1}{2}}\|u\|^2_{L^2} + (1+t)^{\frac{1}{2}}\|\nabla u\|^2_{L^2} \leq  C(1+t)^{\frac{1}{2}}\|\tau\|^2_{L^2} + \frac{1}{2}(1+t)^{-\frac{1}{2}}\| u\|^2_{L^2}
\end{align}
Integrating over $[0,t)$ with respect to $s$, we infer from \eqref{5ineq2''} and \eqref{5ineq28} that
	\begin{align}\label{5ineq36}
	~~~(1+t)^{\frac{1}{2}}\|u\|^2_{L^2} + \int_0^t (1+s)^{\frac{1}{2}}\|\nabla u\|^2_{L^2}ds \leq CE^{2,0}.
\end{align}
For $d=2$, applying Lemma \ref{Ldecay}, \eqref{5ineq28} and \eqref{5ineq36}, we deduce that
\begin{align}\label{5ineq37}
	B_2
	&\leq \int_0^t (1+t-s)^{-\frac{1}{2}}\left(\|\tau\|_{L^{2}} +\|u\|_{L^{2}}\|\nabla u\|_{L^{2}}\right)ds \\ \notag
	& \leq  CE^{0,1}\int_0^t (1+t-s)^{-\frac{1}{2}}e^{-ct} ds + CE^{\frac{11}{6},\frac{1}{6}}\big[\int_0^t (1+t-s)^{-1}(1+t)^{-\frac{4}{3}}ds\big]^{\frac{1}{2}}\\ \notag
	& \leq  \frac{d_0}{2}(1+t)^{-\frac{1}{2}}.
\end{align}
According to \eqref{5ineq32}, \eqref{5ineq33} and \eqref{5ineq37}, we conclude that
	\begin{align}\label{5ineq38}
	\|u\|_{L^2} \geq \left(\int_{B}|\hat{u}|^2d\xi\right)^{\frac{1}{2}} &\geq \frac{d_0}{2}(1+t)^{-\frac{d}{4}},~~~~~~~~
\end{align}
which implies that the decay rate we obtain is optimal.
\hfill$\Box$ \\
\smallskip
\noindent\textbf{Acknowledgments} This work was
partially supported by the National Natural Science Foundation of China (No.12171493), the Macao Science and Technology Development Fund (No. 0091/2018/A3), and Guangdong Province of China Special Support Program (No. 8-2015).


\phantomsection
\addcontentsline{toc}{section}{\refname}
\bibliographystyle{abbrv} 
\bibliography{hooke}

\end{document}